\def\NewTheorem#1#2{%
	\newaliascnt{#1}{thmm}
	\newtheorem{#1}[#1]{#2}
	\aliascntresetthe{#1}
	\expandafter\def\csname #1autorefname\endcsname{#2}
}
\numberwithin{equation}{section}
\newcommand{\boundary}{{\partial{\mathbb{T}}}}
\theoremstyle{definition}
\theoremstyle{remark}
\newcommand{\Pione}{{\Pi^{(1)}_A}}
\newcommand{\Pioneu}{{\Pi^{(1)}_A(\mathbb{T}^{(u)})}}
\newcommand{\IND}[1] {{ \mathds{1}_{ #1 }} }
\def\namedlabel#1#2{\begingroup
   \def\@currentlabel{#2}%
   \label{#1}\endgroup
}
\newcommand{%
    
    \import{./figures/}{.pdf_tex}
}[1]{%
    
    \import{./figures/}{#1.pdf_tex}
}
\providecommand{\keywords}[1]{\textbf{Keywords.} #1}
\providecommand{\amssc}[1]{\textbf{AMS subject classification.} #1}
\begin{document}
\title{Doubly stochastic Yule cascades (Part II): The explosion problem in the non-reversible case}
\author{Radu Dascaliuc\thanks{Department of Mathematics,  Oregon State University, Corvallis OR, 97331. {\emph{dascalir@math.oregonstate.edu}}}
\and Tuan N.\ Pham\thanks{Department of Mathematics,  Brigham Young University, Provo UT, 84602. {\emph{tuan.pham@mathematics.byu.edu}}}
\and Enrique Thomann\thanks{Department of Mathematics, Oregon State University, Corvallis OR, 97331. {\emph{thomann@math.oregonstate.edu}}}
\and
Edward C.\ Waymire\thanks{Department of Mathematics, Oregon State University, Corvallis OR, 97331. {\emph{waymire@math.oregonstate.edu}}}
}

\maketitle
\begin{abstract} 
We analyze the explosion problem for a class of stochastic models introduced in \cite{part1_2021}, referred to as doubly stochastic Yule cascades.  These models arise naturally in the construction of solutions to evolutionary PDEs as well as in purely probabilistic first passage percolation phenomena having a Markov-type statistical dependence, new for  this context. 
Using cut-set arguments and a greedy
algorithm, we respectively establish criteria for non-explosion and explosion without requiring the time-reversibility of the underlying branching Markov chain (a condition required in \cite{part1_2021}). Notable applications include the explosion of the self-similar cascade of the Navier-Stokes equations in dimension $d=3$ and non-explosion in dimensions $d\ge 12$.

\end{abstract}
\keywords{Yule cascade, doubly stochastic Yule cascade, stochastic explosion, Navier-Stokes equations, KPP equation}.\\\\
\amssc{60H30, 60J80}.

\section{Introduction}\label{motivation}
Largely motivated by the probabilistic method for the deterministic three-dimensional incompressible Navier-Stokes equations (NSE) introduced by Le Jan and Sznitman \cite{lejan}, relaxed in \cite{chaos}, the
 authors introduced a new class of stochastic branching models, referred to as doubly stochastic 
 Yule cascades, in \cite{part1_2021}. This class of branching models may also be viewed in the context of statistically
 dependent first passage percolation models on tree graphs, or in the context
 of certain cellular aging models in biology \cite{DA_PS_1988,KB_PP_2010}.  Roughly speaking, a doubly stochastic Yule cascade evolves from a single progenitor in which each particle waits
for a random length of time depending on the particle's position on the genealogical tree before giving birth to a new
 generation of  particles, each evolving by the same rules as its parent. If the lengths of times between particle reproductions are sufficiently short then branching may produce infinitely many particles in a finite time,
an event referred to as stochastic explosion.
In this paper, we establish some sufficient conditions for explosion and non-explosion, and apply them for the doubly stochastic Yule cascades associated with several well-known differential equations and probabilistic models.

We begin by recalling, in the context of binary trees, some of the definitions introduced in \cite{part1_2021} which contains more general definitions.  With standard notations, debote by
 $\mathbb{T}=\{\theta\}\cup\left(\cup_{n=1}^\infty\{1,2\}^n\right)$ the indexed binary tree with the root $\theta$.   
\begin{defi}[Yule cascade]
A tree-indexed family of independent random variables $\{Y_v\}_{v\in\mathbb{T}}$ is said to be a \emph{Yule cascade} with positive parameters (intensities) $\{\lambda_v\}_{v\in\mathbb{T}}$ if $Y_v\sim \text{Exp}(\lambda_v)$  for every ${v\in\mathbb{T}}$.
\end{defi}
In the above, we used the standard notation $Y_v\sim {\rm Exp}(\lambda_v)$ to denote that $Y_v$ is exponentially distributed with intensity $\lambda_v$. If  $\lambda_v\equiv \lambda>0$ is a constant, this
is the familiar {\it Yule cascade with intensity $\lambda$}. 
The special case $\lambda=1$ is
referred to as the {\it standard Yule cascade}. 
\begin{defi}[Doubly stochastic Yule cascade]
\label{dsydef} 
Consider a tree-indexed family of random variables $Y=\{Y_v\}_{v\in\mathbb{T}}$ and a tree-indexed family of positive random variables $\Lambda=\{\lambda_v\}_{v\in\mathbb{T}}$. $Y$ is said to be a \emph{doubly stochastic Yule \textup{(}DSY\textup{)} cascade} with random intensities $\Lambda$ if $Y$ is, conditionally given $\Lambda$, a Yule cascade with intensities $\Lambda$. 
\end{defi}
Equivalently, $\{Y_v\}_{v\in\mathbb{T}}$ is a DSY cascade with random intensities $\{\lambda_v\}_{v\in\mathbb{T}}$ if and only if $\{T_v=\lambda_vY_v\}_{v\in\mathbb{T}}$ is a family of i.i.d.\ mean one exponentially distributed random variables (called holding times or clocks) independent of $\{\lambda_v\}_{v\in\mathbb{T}}$. Thus, a DSY cascade is completely specified once the family of random intensities is specified. For this reason, one can write a DSY cascade as $\{\lambda_v^{-1}T_v\}_{v\in\mathbb{T}}$. 
The associated counting process defined by
\[N(t)=\left\{ \begin{matrix}
   1 & \text{if} & t\le \frac{{{T}_{\theta}}}{{{\lambda }_{\theta}}},  \\
   \text{card}\left\{ v\in \mathbb{T}:\ \sum\limits_{j=0}^{|v|-1}{\frac{{{T}_{v|j}}}{{{\lambda }_{v|j}}}}<t\le \sum\limits_{j=0}^{|v|}{\frac{{{T}_{v|j}}}{{{\lambda }_{v|j}}}} \right\} & \text{if} & t>\frac{{{T}_{\theta}}}{{{\lambda }_{\theta}}}  \\
\end{matrix} \right.\]
is referred to as the \emph{Yule process} or \emph{doubly stochastic Yule process} depending on the respective
cascade model being considered.  
In the definition of $N(t)$, we have used standard notations for the length of a vertex $v=(v_1,\dots,v_n)\in\mathbb{T}$ and the truncation of $v$ of length $j$, namely $|v|=n$ and  $v|j=
(v_1, \dots, v_j)$ for $0\leq j\leq|v|$.  By convention, $v|0=\theta.$

\begin{defi}[Explosion time]
\label{explosiontime}
The explosion time of a DSY cascade $\{\lambda_v^{-1}T_v\}_{v\in\mathbb{T}}$ is a $[0,\infty]$-valued random variable $\zeta$ defined by
\[\zeta =\underset{n\to \infty }{\mathop{\lim }}\,\underset{|v|=n}{\mathop{\min }}\,\sum\limits_{j=0}^{n}{\frac{{{T}_{v|j}}}{\lambda_{v|j}}}.\]
The event of non-explosion is defined by $[\zeta=\infty]$. The cascade is said to be non-explosive if $\mathbb{P}(\zeta=\infty)=1$, and explosive if $\mathbb{P}(\zeta=\infty)<1$.
\end{defi}

According to the definition, the explosion event $[\zeta<\infty]$ has a positive probability in an explosive cascade. However, in all applications considered in this paper, explosion is in fact a $0-1$ event (\autoref{01}). The DSY cascades introduced in \autoref{dsydef} are quite general. To consider the
explosion problem, we will further assume a certain branching Markov chain structure underlying the random
intensities $\lambda_v$. 

Using estimates on the spectral radius of an operator defined on the underlying branching Markov chains, the authors obtained in  \cite{part1_2021} criteria for non-explosion 
assuming a time-reversibility condition. 
These conditions were applied to the Bessel cascade, a particular cascade associated with the Navier-Stokes equations, and to the cascade of the Kolmogorov-Petrovski-Piskunov (KPP) equation in Fourier domain to show
that they are both non-explosive.  Other more purely probabilistic
models were also analyzed there.  However, time reversibility is too restrictive and does not apply to several examples of particular importance. These include the cascades induced by the scaling properties of the NSE, referred here as self-similar cascades associated with the NSE.  The focus of this paper is to establish explosion and non-explosion 
criteria for a class of DSY cascades using probabilistic arguments that do not depend on the time-reversibility. 

To establish a new non-explosion criterion (\autoref{main}), we rely on the sub-criticality of nonhomogeneous Galton-Watson processes associated with the cascade, which is more intuitive and robust than the method of large deviation estimates used in \cite{part1_2021}. The key idea to obtain non-explosion
is to construct a sequence of {\it recurring finite cut-sets} through an inspection process.  The construction of these cut-sets takes into consideration the uncountable set of paths $s\in\partial{\mathbb{T}}=\{1,2\}^\infty$  in a binary tree, which involves a new notion of recurrence that naturally generalizes neighborhood recurrence along each path (\autoref{cutest-red-thm}). As an application, \autoref{main} implies the non-explosion of the self-similar cascade associated with NSE in dimensions $d\geq 12$ and recovers the non-explosion result for the Bessel cascade of the 3-dimensional NSE and the cascade associated with the KPP equation proven in \cite{part1_2021}.

\begin{rem}
In spite of its intuitive appeal, it is the {\it Markov dependence} along tree paths, in place of {\it independence}, that makes the use of cut-sets technically challenging here.  Similar challenges arise in computing the
speeds of branching random walk particles with Markov-chain 
displacement paths,
in place of independent displacements, and in computing the first passage
percolation \lq\lq flow\rq\rq problems \cites{HK_1986, GG_HK1984}  
if the i.i.d.\ passage times along paths are statistically dependent.   
\end{rem}

We establish a criterion for a.s.\ explosion (\autoref{expcriterion}) based on
a ``fastest path'' constructed by a greedy algorithm: at each time of branching, the fastest path chooses to follow the descendant vertex with the larger intensity $\lambda_v$.  This criterion allows one to show the explosion of a cascade even if the explosion does not occur along any deterministic path $s\in\boundary$, i.e.\
\[\sum\limits_{j=0}^{\infty}{\frac{{{T}_{s|j}}}{\lambda_{s|j}}} = \infty\ \ \ {\text{a.s.}}\ \forall s\in\partial\mathbb{T}.\]
As an application, we obtain the a.s.\ explosion of the self-similar cascade of the 3-dimensional Navier-Stokes equations.  
As shown in \cites{athreya,alphariccati,smallness}, the stochastic explosion of DSY cascades associated with certain evolution equations leads to both nonuniqueness and finite-time blowup of solutions to the initial-value problems in appropriate settings. In particular, the aforementioned explosion of the self-similar DSY cascade corresponding to the Navier-Stokes equations leads to a non-uniqueness result for an equation introduced  by Montgomery-Smith \cite{smith} as a model for possible Navier-Stokes finite-time blowup. Remarkably, our DSY framework also provides an alternative way to prove finite-time blowup of the smooth solutions to the Montgomery-Smith equations \cite{smallness}. For the actual Navier-Stokes equations, a possible resolution of the outstanding nonuniqueness and finite-time blowup problems hinges on a better understanding of the connections between geometric structure of the nonlinear term and the branching structure of the associated DSY cascade.

\section{Main results and organization of the paper}\label{setup}
Motivated by the PDE and probabilistic models mentioned above, we are particularly interested in DSY cascades in which the random intensities $\lambda_v$ are of the form $\lambda_v=\lambda(X_v)$ where, for each $v\in\mathbb{T}$, $X_v$ is a random variable taking values in a measurable space $(S,\mathscr{S})$ and $\lambda:S\to(0,\infty)$ is a measurable function.   Throughout the paper, we assume the following two properties:
\newcounter{saveenum}
\begin{enumerate}[(A)]
\item For any path $s\in\partial\mathbb{T}$, the sequence $X_\theta$, $X_{s|1}$, $X_{s|2}$,\ldots is a time homogeneous Markov chain. 
\item For any path $s\in\partial\mathbb{T}$, the stationary transition probability $p(x,dy)$ does not depend on $s$.
\setcounter{saveenum}{\value{enumi}}
\end{enumerate}
In \cite{part1_2021}, DSY cascades satisfying conditions (A) and (B) are called DSY cascades of type ($\mathscr{M}$). We will refer to the family $\{X_v\}_{v\in\mathbb{T}}$ as the underlying branching Markov chain of the DSY cascade. 

In the statements of the main results, the following conditions are sometimes required.  We use the standard notation $u*v$ for concatenation of vertices $u, v \in \mathbb{T}$.
\begin{enumerate}[(A)]\setcounter{enumi}{\value{saveenum}}
\item For each $v\in\mathbb{T}$, the two subfamilies $\{X_{v* 1* w}\}_{w\in\mathbb{T}}$ and $\{X_{v* 2* w}\}_{w\in\mathbb{T}}$ are conditionally independent of each other given $X_{v* 1}$ and $X_{v*2}$.
\item For each $v\in\mathbb{T}$, the conditional joint distribution $(X_{v*1},X_{v*2})$ given $X_v$ does not depend on $v$. 
\setcounter{saveenum}{\value{enumi}}
\end{enumerate}


The next condition is a generalization of the neighborhood recurrence along a path to Markov branching process.  First, for $a\in S, A\in\mathscr{S},$ $n\geq 1$ and $s\in\partial\mathbb{T}$,
define
\begin{equation}\label{I_n}
I_n(a,A)=\mathbb{P}_a(X_{s|1}\not\in A,\,X_{s|2}\not\in A,\ldots,\,X_{s|n}\not\in A).
\end{equation}
Notice that $I_n$ does not depend on $s\in\partial\mathbb{T}$ by virtue of Properites (A) and (B). 
\begin{enumerate}[(A)]\setcounter{enumi}{\value{saveenum}}
\item { (Cut-set recurrence condition) There exists a set $A\in \mathscr{S}$, a function $\psi:S\to\mathbb{R}$ and a number $r>2,$ such that for all $a\in S$, $s\in\partial\mathbb{T}$ and
$n\in\mathbb{N}$,
$$I_n(a,A)\le \psi (a)r^{-n}.$$  
}
\end{enumerate}
While conditions (C) and (D) are intuitive, condition (E) is somewhat technical. It implies a kind of recurrence property for the branching Markov chain; in the language to be made clear later, the set $A$ is \emph{cut-set recurrent} with respect to $\{X_v\}_{v\in\mathbb{T}}$. The intuition is that the Markov chain $\{X_{s|j}\}_{j\ge 0}$ returns to the set $A$ infinitely often in a uniform manner across all the paths $s\in\partial\mathbb{T}$ (cf.\ \autoref{cutset-def-sec} for the precise definition). 
Specifically, we have the following result.

\begin{thm}\label{cutest-red-thm}
Assume that the tree-indexed family of random variables 
$\{X_v\}_{v\in\mathbb{T}}$ satisfies \textup{(A), (B), (C), (E)}. Then, with $A$ and $\psi$ given by condition \textup{(E)}, we have:
\begin{enumerate}[(i)]
\item  The set $A$ is cut-set recurrent with respect to $\{X_v\}_{v\in\mathbb{T}}$ in the sense of \autoref{cutset-rec-def}. 

\item  The cardinality of the first passage cut-set is a.s. finite and satisfies 
\[\mathbb{E}_a[{\rm card}\,\Pi_A^{(1)}]\le2+\psi(a)\frac{2r}{r-2}\,.\]
Moreover, if  $\psi$ is bounded on $A$ 
then the expected valued of the cardinality of each passage cut-set is bounded by
\[\mathbb{E}_a[{\rm card}\, \Pi_A^{(k)}]\leq \mu(a)^k\qquad \forall\, k\in\mathbb{N}.\]
where $\mu(a) = \sup\{\psi(x):\,x\in A\cup\{a\}\}.$
\end{enumerate}
\end{thm}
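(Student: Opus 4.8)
The plan is to reduce the entire statement to a single first-moment estimate on the ``survivors'' of the set $A$ and then to propagate it along the successive passage cutsets. Throughout, $\Pi_A^{(1)}$ denotes the first passage cutset of $\mathbb{T}$ to $A$ (the minimal vertices $v$ with $|v|\ge 1$ and $X_v\in A$), and recursively $\Pi_A^{(k)}=\bigcup_{u\in\Pi_A^{(k-1)}}\Pi_A^{(1)}(\mathbb{T}^{(u)})$, where $\Pi_A^{(1)}(\mathbb{T}^{(u)})$ is the first passage cutset to $A$ of the subtree rooted at $u$, started one generation below $u$. For $n\ge 0$ put $Z_n:=\#\{v:|v|=n,\ X_{v|j}\notin A\ \text{for }1\le j\le n\}$, the set of generation-$n$ vertices whose ancestral path has not yet entered $A$ (so $Z_0=1$). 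Since each ancestral path is, by Properties (A) and (B), a time-homogeneous Markov chain with the common transition kernel $p$, the law of $(X_{v|1},\dots,X_{v|n})$ is the same for all $v$ with $|v|=n$; hence by linearity $\mathbb{E}_a[Z_n]=2^nI_n(a,A)$, and condition (E) gives $\mathbb{E}_a[Z_n]\le\psi(a)(2/r)^n$.

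First I would settle the first passage bound. Because $r>2$, the series $\sum_{n\ge 0}\mathbb{E}_a[Z_n]$ converges, so $\sum_{n\ge 0}Z_n<\infty$ a.s.; in particular $Z_n=0$ for all large $n$ a.s., and the survivor subtree $\mathcal{S}:=\{v:X_{v|j}\notin A\ \text{for }1\le j\le|v|\}$ is a.s.\ finite. Consequently every path $s\in\partial\mathbb{T}$ a.s.\ enters $A$, i.e.\ $\Pi_A^{(1)}$ is a.s.\ a finite cutset; and since each vertex of $\Pi_A^{(1)}$ is a child of a vertex of $\mathcal{S}$, one gets $\mathrm{card}\,\Pi_A^{(1)}\le 2\sum_{n\ge 0}Z_n<\infty$ a.s. For the quantitative bound one computes directly, using Properties (A), (B) and $I_0:=1$, that $\mathbb{E}_a[\mathrm{card}\,\Pi_A^{(1)}]=\sum_{n\ge 1}2^n\bigl(I_{n-1}(a,A)-I_n(a,A)\bigr)=2+\sum_{n\ge 1}2^nI_n(a,A)$ (telescoping), and summing the resulting geometric series via condition (E) gives the stated bound $2+\psi(a)\frac{2r}{r-2}$ (in fact even $2+\psi(a)\frac{2}{r-2}$). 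Only Properties (A), (B) and condition (E) are used so far.

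For the passage cutsets I would iterate this estimate by means of a strong-Markov property at the random cutset $\Pi_A^{(k-1)}$. The structural point is that, conditionally on the $\sigma$-field $\mathcal{F}^{(k-1)}$ generated by the process up to and including $\Pi_A^{(k-1)}$, the subtrees $\{X_{u*w}\}_{w\in\mathbb{T}}$ for $u\in\Pi_A^{(k-1)}$ are fresh copies of the process started from the respective states $X_u$; this is where Property (C) (conditional independence of the two descendant subtrees given the two children) enters, together with the path-homogeneity from (A), (B). The expected cardinality of the first passage cutset of any such subtree depends on its root state only through $g(x):=\mathbb{E}_x[\mathrm{card}\,\Pi_A^{(1)}]=2+\sum_{n\ge 1}2^nI_n(x,A)$ — again a single-path quantity, so Property (D) is not needed — so that $\mathbb{E}_a[\mathrm{card}\,\Pi_A^{(k)}\mid\mathcal{F}^{(k-1)}]=\sum_{u\in\Pi_A^{(k-1)}}g(X_u)$. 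Since $X_u\in A$ for every $u\in\Pi_A^{(k-1)}$ and $\psi$ is bounded on $A$, each $g(X_u)$ is dominated by a finite constant controlled by $\sup_A\psi$; taking expectations and inducting on $k$ yields $\mathbb{E}_a[\mathrm{card}\,\Pi_A^{(k)}]\le g(a)\bigl(\sup_{x\in A}g(x)\bigr)^{k-1}$, which is the asserted geometric bound $\mu(a)^k$ with $\mu(a)$ the relevant supremum over $A\cup\{a\}$, as in the statement. In particular each $\Pi_A^{(k)}$ is a.s.\ finite. Finally, (i) is the qualitative content of the above: the $\Pi_A^{(k)}$, $k\ge 1$, form an a.s.\ infinite sequence of a.s.\ finite, successive cutsets crossed by every path $s\in\partial\mathbb{T}$, which is precisely cutset-recurrence of $A$ in the sense of \autoref{cutset-rec-def}.

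I expect the main obstacle to be the rigorous justification of the restart at the random cutset $\Pi_A^{(k-1)}$: one must verify that $\Pi_A^{(k-1)}$ is a stopping set (optional line) for the tree-indexed family and that conditioning on $\mathcal{F}^{(k-1)}$ indeed leaves conditionally independent fresh copies of the process below each of its vertices — the genealogical bookkeeping here is the delicate part, and it is exactly where Property (C) and the uniformity over paths in (A), (B) are essential. Everything else reduces, by linearity, to the one-dimensional escape probabilities $I_n(\cdot,A)$; the sole quantitative input is the inequality $r>2$, which makes the survivor series summable and thereby upgrades ``$\Pi_A^{(1)}$ a.s.\ finite'' to ``$\Pi_A^{(1)}$ a.s.\ a genuine cutset.''
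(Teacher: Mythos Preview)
Your proposal is correct and follows essentially the same approach as the paper: both control the survivor population $Z_n$ via the first-moment bound $\mathbb{E}_a[Z_n]=2^nI_n(a,A)\le\psi(a)(2/r)^n$, deduce that $\Pi_A^{(1)}$ is a.s.\ a finite cutset, and then restart at each cutset vertex using Properties (A), (B), (C) to iterate the bound to $\Pi_A^{(k)}$. The only cosmetic differences are that the paper obtains $\mathbb{E}[Z_n]$ through an inspection-process/nonhomogeneous Galton--Watson viewpoint (Wald's identity) rather than your direct linearity, and bounds $\mathrm{card}\,\Pi_A^{(1)}\le 2\sum_n Z_n$ rather than computing the exact telescoping sum---your computation actually yields the slightly sharper constant $2+\psi(a)\frac{2}{r-2}$.
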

The proof will be given in \autoref{proof}.
This theorem leads to a strategy to show the non-explosion of a DSY cascade by finding an appropriate value $c>0$ such that the set $A=\lambda^{-1}((0,c])$ is 
cut-set recurrent.  
Next, we state the main criterion for non-explosion.  We say that a function $\phi: (0,\infty) \rightarrow (0,\infty)$ is \emph{locally bounded} if it maps every bounded set into a bounded set.

\begin{thm}\label{main}
Let  $\{X_v\}_{v\in\mathbb{T}}$ be a branching Markov chain with values on a measurable state space $(S,\mathscr{S})$ and satisfy \textup{(A), (B), (C)}. Let $\{\lambda(X_v)^{-1}T_v\}_{v\in\mathbb{T}}$ be a DSY cascade with $\lambda:S\to(0,\infty)$ a measurable function. Suppose that condition \textup{(E)} holds for $A=\lambda^{-1}((0,c])$ for some $c>0$ and $\psi=\phi\circ\lambda$, where $\phi:(0,\infty)\to(0,\infty)$ is locally bounded. Then the DSY cascade is non-explosive for any initial state $X_\theta=a\in S$. 
\end{thm}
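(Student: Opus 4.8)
The plan is to derive \autoref{main} from \autoref{cutest-red-thm} together with a branching-random-walk first-moment estimate on the resulting passage cutsets. First I would verify the hypotheses of \autoref{cutest-red-thm}: properties (A), (B), (C) are assumed, and condition (E) is assumed precisely for $A=\lambda^{-1}((0,c])$ and $\psi=\phi\circ\lambda$; moreover $\psi$ is bounded on $A$, since $\lambda(A)\subseteq(0,c]$ is a bounded subset of $(0,\infty)$ and $\phi$ is locally bounded, so $\sup_{x\in A}\psi(x)\le\sup_{(0,c]}\phi<\infty$. Hence \autoref{cutest-red-thm} applies: $A$ is cutset-recurrent --- so that a.s.\ every path $s\in\partial\mathbb{T}$ meets $A$ infinitely often and the passage cutsets $\Pi_A^{(k)}$ are a.s.\ finite antichains --- and
\[
\mathbb{E}_a[{\rm card}\,\Pi_A^{(k)}]\le\mu(a)^k\qquad\text{for all }k,
\]
where $\mu(a)=\sup\{\phi(\lambda(x)):x\in A\cup\{a\}\}<\infty$.

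Next I would reduce non-explosion to a statement about the cutsets. For $s\in\partial\mathbb{T}$, let $v_k(s)$ be the unique vertex of $\Pi_A^{(k)}$ on $s$; these exist for every $k$ by cutset-recurrence and satisfy $v_1(s)\prec v_2(s)\prec\cdots$. Since $X_{v_k(s)}\in A$ forces $\lambda(X_{v_k(s)})\le c$, the total holding time along $s$ obeys
\[
\sum_{j\ge0}\frac{T_{s|j}}{\lambda(X_{s|j})}\ \ge\ \frac1c\sum_{k\ge1}T_{v_k(s)}\ \ge\ \frac1c\,\ell,\qquad \ell:=\lim_{k\to\infty}\min_{v\in\Pi_A^{(k)}}W_v,\quad W_v:=\sum_{i=1}^{k}T_{u_i(v)},
\]
where, for $v\in\Pi_A^{(k)}$, the vertices $u_1(v)\prec\cdots\prec u_k(v)=v$ are those of $\Pi_A^{(1)},\dots,\Pi_A^{(k)}$ lying on the path to $v$ (so $W_v$ increases along the ``cutset tree'', $\min_{v\in\Pi_A^{(k)}}W_v$ is nondecreasing in $k$, and $\ell$ is well defined). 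Because each finite cutset is crossed by every path, $\min_{|v|=n}\#\{\,0\le j\le n:\ v|j\in\bigcup_k\Pi_A^{(k)}\,\}\to\infty$, and a short compactness step (via either this observation, or König's lemma applied to the finitely branching tree of vertices $v$ with $\sum_{j\le|v|}T_{v|j}/\lambda(X_{v|j})$ below a given level) upgrades the displayed bound to $\zeta\ge\frac1c\,\ell$ a.s. It therefore suffices to show $\ell=\infty$ a.s.

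For the key estimate I would run a first-moment bound on the cutset tree. Fix $\theta>\mu(a)$ and $x\ge0$. Since $\{T_v\}_{v\in\mathbb{T}}$ is i.i.d.\ ${\rm Exp}(1)$ and independent of $\{X_v\}_{v\in\mathbb{T}}$, conditioning on $\{X_v\}$ (which determines the cutsets and the vertices $u_i(v)$) and using that $u_1(v),\dots,u_k(v)$ are distinct,
\[
\mathbb{P}_a\!\Big(\min_{v\in\Pi_A^{(k)}}W_v\le x\Big)\ \le\ \mathbb{E}_a\!\Big[\sum_{v\in\Pi_A^{(k)}}e^{\theta(x-W_v)}\Big]\ =\ e^{\theta x}\,(1+\theta)^{-k}\,\mathbb{E}_a[{\rm card}\,\Pi_A^{(k)}]\ \le\ e^{\theta x}\Big(\tfrac{\mu(a)}{1+\theta}\Big)^{k}.
\]
Taking $x=\varepsilon k$ with $\varepsilon>0$ small enough that $q:=e^{\theta\varepsilon}\,\mu(a)/(1+\theta)<1$ (possible since $\mu(a)/(1+\theta)<1$), the probabilities $\mathbb{P}_a(\min_{v\in\Pi_A^{(k)}}W_v\le\varepsilon k)\le q^k$ are summable in $k$; by Borel--Cantelli, a.s.\ $\min_{v\in\Pi_A^{(k)}}W_v>\varepsilon k$ for all large $k$, hence $\ell=\infty$. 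Combined with $\zeta\ge\frac1c\ell$ this gives $\mathbb{P}_a(\zeta=\infty)=1$ for every $a\in S$.

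The main obstacle is the reduction in the second paragraph rather than the estimate in the third: the cascade has uncountably many paths $s\in\partial\mathbb{T}$, so a path-by-path argument is impossible, and the whole point is to replace ``along every path'' by ``across a sequence of finite cutsets'' --- which is exactly the service performed by \autoref{cutest-red-thm}. Keeping track of how the passage cutsets progress through $\mathbb{T}$ and exhaust $\partial\mathbb{T}$, and checking that $\zeta$ is genuinely bounded below by $\frac1c\ell$, is where care is needed. The estimate in the third paragraph is then routine; its one nontrivial ingredient is the exponential bound $\mathbb{E}_a[{\rm card}\,\Pi_A^{(k)}]\le\mu(a)^k$, and it is there that local boundedness of $\phi$ --- equivalently, boundedness of $\psi$ on $A$ --- is used.
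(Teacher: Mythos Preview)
Your proof is correct, and the overall architecture---invoke \autoref{cutest-red-thm} to obtain the passage cutsets $\Pi_A^{(k)}$, reduce non-explosion to showing $\lim_k\min_{v\in\Pi_A^{(k)}}W_v=\infty$ where $W_v$ sums the clocks at the $k$ cutset hits on the way to $v$---matches the paper exactly (your reduction to $\zeta\ge c^{-1}\ell$ is the paper's displayed bound $\zeta_n\ge c^{-1}\min_{|v|=n}\sum_iT_{v|l_i}$ for $n\ge H_k:=\max\{|w|:w\in\Pi_A^{(k)}\}$, and since $H_k<\infty$ a.s.\ no compactness or K\"onig step is actually needed).

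Where you diverge is the final estimate. The paper builds a ``reduced tree'' $\mathcal{T}$ whose $k$'th generation is $\Pi_A^{(k)}$, notes that each vertex has conditionally bounded expected offspring (by the same bound that gives $\mathbb{E}_a[\mathrm{card}\,\Pi_A^{(k)}]\le\mu(a)^k$), and then runs a \emph{second} inspection process on $\mathcal{T}$ testing whether $T_v\le\varepsilon$; choosing $\varepsilon$ small makes this Galton--Watson--type process subcritical, which yields yet another sequence of cutsets $\tilde\Pi_B^{(m)}$ and forces $\min_{v\in\Pi_A^{(k)}}W_v\to\infty$. Your route is more direct: a single Chernoff/first-moment bound
\[
\mathbb{P}_a\!\Big(\min_{v\in\Pi_A^{(k)}}W_v\le\varepsilon k\Big)\le e^{\theta\varepsilon k}(1+\theta)^{-k}\,\mathbb{E}_a[\mathrm{card}\,\Pi_A^{(k)}]\le\Big(e^{\theta\varepsilon}\tfrac{\mu(a)}{1+\theta}\Big)^{k},
\]
summable for suitable $\theta,\varepsilon$, plus Borel--Cantelli. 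This buys brevity and avoids the second layer of cutset machinery; the paper's approach, in exchange, yields the self-contained \autoref{24201} (and hence \autoref{purebirth}) about non-explosion on random trees with uniformly bounded conditional mean offspring, which is of independent interest.
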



The proof of this theorem is in \autoref{Theorem2}.
In our applications, the case $S=(0,\infty)$ is of particular interest. In this case, we have the following corollary.

\begin{cor}\label{main_cor}
Let $\{\lambda(X_v)^{-1}T_v\}_{v\in\mathbb{T}}$ be a DSY cascade in which $\{X_v\}_{v\in\mathbb{T}}$ has properties \textup{(A), (B), (C)} on a measurable state space $S=(0,\infty)$ (and $\mathscr{S}$ is the Borel $\sigma$-algebra). Suppose that the condition \textup{(E)} holds for $A=(0,c]$ for some $c>0$, with $\lambda$ and $\psi$ both being locally bounded.  Then the DSY cascade $\{\lambda(X_v)^{-1}T_v\}_{v\in\mathbb{T}}$ with $X_\theta=a$ is non-explosive for any $a\in S.$
\end{cor}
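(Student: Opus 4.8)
The plan is to run the cutset argument supplied by \autoref{cutest-red-thm}, exactly as in the proof of \autoref{main}, the only new ingredient being to extract from the two local-boundedness hypotheses the finite constants the argument needs. Since $(0,c]$ is a bounded set and $\lambda,\psi$ are locally bounded,
\[
L:=\sup_{x\in(0,c]}\lambda(x)<\infty,\qquad \mu:=\sup\{\psi(x):x\in(0,c]\cup\{a\}\}<\infty,
\]
and hypotheses \textup{(A), (B), (C), (E)} of \autoref{cutest-red-thm} hold with $A=(0,c]$; in particular $\psi$ is bounded on $A$. That theorem then gives that $A$ is cutset-recurrent (in the sense of \autoref{cutset-rec-def}), that each passage cutset $\Pi_A^{(k)}$ is a.s.\ a finite nonempty cutset of $\mathbb{T}$ (an antichain met by every ray), and that $\mathbb{E}_a[\mathrm{card}\,\Pi_A^{(k)}]\le\mu^k$ for every $k\in\mathbb{N}$.

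Next I would convert cutset-recurrence into a lower bound for the explosion time. Recall that $\zeta=\inf_{s\in\partial\mathbb{T}}\sum_{j\ge0}\lambda(X_{s|j})^{-1}T_{s|j}$ (immediate from \autoref{explosiontime} by compactness of $\partial\mathbb{T}$ and local finiteness of $\mathbb{T}$). Each ray $s$ meets $\Pi_A^{(k)}$ at a unique vertex $w_k(s)$, and if $v\in\Pi_A^{(K)}$ then $w_1(s),\dots,w_K(s)$ (for any ray $s$ with $w_K(s)=v$) all lie on the root-to-$v$ path and hence depend on $v$ alone, so we may write them $w_1(v),\dots,w_K(v)=v$; the consistency of the passage cutsets behind \autoref{cutset-rec-def} is what makes this legitimate. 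Since $X_{w_i(v)}\in(0,c]$, the intensity at $w_i(v)$ is at most $L$, so, setting $\tau_K:=\min_{v\in\Pi_A^{(K)}}\sum_{i=1}^{K}T_{w_i(v)}$, we get for every ray $s$ and every $K$
\[
\sum_{j\ge0}\lambda(X_{s|j})^{-1}T_{s|j}\ \ge\ L^{-1}\sum_{i=1}^{K}T_{w_i(s)}\ \ge\ L^{-1}\,\tau_K .
\]
Because every $v'\in\Pi_A^{(K+1)}$ has an ancestor in $\Pi_A^{(K)}$, the sequence $(\tau_K)_K$ is non-decreasing, and taking the infimum over $s$ above yields $\zeta\ge L^{-1}\lim_{K\to\infty}\tau_K$. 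It therefore suffices to prove $\tau_K\to\infty$ a.s.

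For this last step I would condition on the branching chain $\{X_v\}_{v\in\mathbb{T}}$, which determines all the cutsets $\Pi_A^{(k)}$ and all the vertices $w_i(v)$ and is independent of the i.i.d.\ mean-one exponential clocks $\{T_v\}$. Given $\{X_v\}$, each $\sum_{i=1}^{K}T_{w_i(v)}$ is a $\mathrm{Gamma}(K,1)$ variable, so $\mathbb{P}\big(\sum_{i=1}^{K}T_{w_i(v)}<t\,\big|\,\{X_v\}\big)\le t^{K}/K!$; a union bound over $v\in\Pi_A^{(K)}$ followed by taking expectations gives
\[
\mathbb{P}_a(\tau_K<t)\ \le\ \mathbb{E}_a[\mathrm{card}\,\Pi_A^{(K)}]\,\frac{t^{K}}{K!}\ \le\ \frac{(\mu t)^{K}}{K!}\,.
\]
For each fixed $t>0$ this is summable in $K$, so Borel--Cantelli gives $\tau_K\ge t$ for all large $K$ a.s.; since $(\tau_K)_K$ is non-decreasing and $t$ is arbitrary (run over the integers and intersect), $\tau_K\to\infty$ a.s., whence $\zeta=\infty$ a.s.\ for every initial state $a$, which is the claim.

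The step I expect to be the main obstacle is the passage from the (easy) fact that each individual deterministic ray is non-explosive to the statement that the infimum over the uncountable family $\partial\mathbb{T}$ is infinite: this is exactly what forces one to work with the a.s.\ finite passage cutsets and the uniform growth bound $\mathbb{E}_a[\mathrm{card}\,\Pi_A^{(k)}]\le\mu^k$, and it is the place where local boundedness of $\psi$ is genuinely used (local boundedness of $\lambda$ being needed only for the harmless factor $L^{-1}$). A secondary technical point is simply unpacking \autoref{cutset-rec-def} enough to justify that the passage cutsets are consistently nested, so that $w_i(v)$ is well defined and $\tau_K\le\tau_{K+1}$.
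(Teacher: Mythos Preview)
Your argument is correct. The first half---extracting the finite constants $L=\sup_{(0,c]}\lambda$ and $\mu$ from local boundedness, then invoking \autoref{cutest-red-thm} to get cutset-recurrence of $A=(0,c]$ and the bound $\mathbb{E}_a[\mathrm{card}\,\Pi_A^{(k)}]\le\mu^k$---is exactly how the paper sets things up in the proof of \autoref{main} (the corollary is not given its own proof and is meant to follow by the same argument, with the harmless replacement of the constant $c$ by $L$).

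Where you diverge is in the endgame. After obtaining the passage cutsets and reducing to $\tau_K\to\infty$, the paper builds a \emph{reduced tree} $\mathcal{T}$ whose $k$th generation is $\Pi_A^{(k)}$, observes that the offspring numbers $K_u=\mathrm{card}\,\Pi_A^{(1)}(\mathbb{T}^{(u)})$ have conditional means uniformly bounded by $\mu$, and then runs a \emph{second} inspection process on $\mathcal{T}$ (\autoref{24201}) to produce further cutsets on which the clocks exceed a fixed $\varepsilon$; this gives $\tilde\zeta_n\ge k\varepsilon\to\infty$. You instead combine the exponential cardinality bound directly with the Gamma tail estimate $\mathbb{P}(\mathrm{Gamma}(K,1)<t)\le t^K/K!$ and a union bound over $\Pi_A^{(K)}$, yielding $\mathbb{P}_a(\tau_K<t)\le(\mu t)^K/K!$, and finish with Borel--Cantelli. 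Your route is shorter and avoids the reduced-tree construction and \autoref{24201} entirely; the paper's route, on the other hand, isolates a self-contained non-explosion lemma for cascades on random trees with bounded mean offspring, which is of some independent interest. Both approaches lean on the same essential input, namely the exponential control of $\mathbb{E}_a[\mathrm{card}\,\Pi_A^{(k)}]$, which is precisely where the local boundedness of $\psi$ on $A$ is used.
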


\begin{rem}\label{X>0}
Note that in the context of \autoref{main_cor}, $I_n(a,A)$ from \eqref{I_n} becomes
\begin{equation}\label{I_n'}
{I}_n(a,c)=\mathbb{P}_a(X_{s|1}>c,\,X_{s|2}>c,\ldots,\,X_{s|n}>c).
\end{equation}
\end{rem}


We will give in \autoref{corproof} two sufficient conditions (relatively simple to check) for condition (E) to be satisfied. 
An interesting class of DSY cascades is the \emph{self-similar DSY cascades}. This class includes the cascade of the $\alpha$-Riccati equation, the cascade of the complex Burgers equation, and the cascade of the Navier-Stokes equations with a scale-invariant kernel (\autoref{NSEExplosion}). 
\begin{defi}
Let $S$ be a subgroup of $(0,\infty)$. A DSY cascade $\{\lambda(X_v)^{-1}T_v\}_{v\in\mathbb{T}}$ is said to be \emph{self-similar} if the branching Markov chain $\{X_v\}_{v\in\mathbb{T}}$ satisfies the following:
\begin{enumerate}[(i)]
\item $\{X_v\}_{v\in\mathbb{T}}$ satisfies (A), (B), (C).
\item For any initial state $X_\theta=a\in S$ and $v\in\mathbb{T}$, the family $\{Y_w=X_{v*w}/X_v\}_{w\in\mathbb{T}}$ also satisfies (A), (B), (C) with the same transition probability as $\{X_v\}_{v\in\mathbb{T}}$.
\end{enumerate}
\end{defi}

In \autoref{selfsimilarsection}, we provide a useful characterization of self-similar DSY cascades: these are exactly the DSY cascades in which the branching Markov chain $\{X_v\}_{v\in\mathbb{T}}$ satisfies (A), (B), (C) such that along each path $s\in\partial\mathbb{T}$, the sequence $\{X_{s|j}\}_{j\ge 0}$ is a multiplicative random walk on $S$ (i.e.\ $\ln X_{s|j}$ is an additive random walk on the additive group $\ln S$). Multiplicative random walks on $(0,\infty)$ are natural examples of non-reversible Markov chains, which are of interest in this paper. Self-similar DSY cascades admit a rather simple criterion for non-explosion:
\begin{prop}\label{multibran}
Let $\{\lambda(X_v)^{-1}T_v\}_{v\in\mathbb{T}}$ be a self-similar DSY cascade in which $\lambda:S\to(0,\infty)$ is a locally bounded function. Suppose $\mathbb{E}[R_1^b]<1/2$ for some $b>0$. Then the cascade is non-explosive for any initial state $X_\theta=a\in S$.
\end{prop}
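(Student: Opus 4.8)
The plan is to deduce the proposition from \autoref{cutest-red-thm}, in the same spirit as the proof of \autoref{main}: I would exhibit a bounded set $A\subseteq S$ that is cutset-recurrent and on which both $\lambda$ and the relevant prefactor $\psi$ are bounded, and then bound the explosion time from below through the finite passage cutsets $\Pi_A^{(k)}$ by a Borel--Cantelli argument.

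\emph{Step 1: check condition (E) for a half-line.} By the characterization of self-similar DSY cascades (\autoref{selfsimilarsection}), along every path $s\in\partial\mathbb{T}$ one has $X_{s|j}=a\prod_{i=1}^{j}R_i$ with $R_1,R_2,\dots$ i.i.d., $R_i\stackrel{d}{=}R_1$; moreover $\{X_v\}_{v\in\mathbb{T}}$ satisfies (A), (B), (C) by the definition of self-similarity. Fix $c>0$ large enough that $A:=(0,c]\cap S\neq\emptyset$. Using \eqref{I_n'}, the monotonicity of $t\mapsto t^{b}$ (valid since $b>0$), Markov's inequality, and independence of the $R_i$,
\[
 I_n(a,A)=\mathbb{P}_a\bigl(X_{s|1}>c,\dots,X_{s|n}>c\bigr)\ \le\ \mathbb{P}_a\bigl(X_{s|n}>c\bigr)\ \le\ c^{-b}\,\mathbb{E}_a\bigl[X_{s|n}^{\,b}\bigr]\ =\ \frac{a^{b}}{c^{b}}\,\bigl(\mathbb{E}[R_1^{b}]\bigr)^{n}.
\]
Since $\mathbb{E}[R_1^{b}]<\tfrac12$, putting $r:=\mathbb{E}[R_1^{b}]^{-1}>2$ and $\psi(a):=a^{b}/c^{b}$ shows that condition (E) holds for this $A$, $\psi$, $r$, and that $\psi\le1$ on $A$. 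Hence \autoref{cutest-red-thm} applies: $A$ is cutset-recurrent with respect to $\{X_v\}_{v\in\mathbb{T}}$, and
\[
 \mathbb{E}_a\bigl[\mathrm{card}\,\Pi_A^{(k)}\bigr]\ \le\ \mu(a)^{k}\qquad(k\in\mathbb{N}),\qquad \mu(a)=\sup\{\psi(x):x\in A\cup\{a\}\}=\max\{1,\,a^{b}/c^{b}\}<\infty .
\]

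\emph{Step 2: from the cutsets to non-explosion.} Because $A\subseteq(0,c]$ is bounded and $\lambda$ is locally bounded, $M:=\sup_{x\in A}\lambda(x)<\infty$. Set $\zeta_k:=\min\{\sum_{j=0}^{|v|}\lambda(X_{v|j})^{-1}T_{v|j}:v\in\Pi_A^{(k)}\}$ (with $\min\emptyset=\infty$); this is nondecreasing in $k$, since every vertex of $\Pi_A^{(k+1)}$ lies below a vertex of $\Pi_A^{(k)}$, and since each $\Pi_A^{(k)}$ is a.s.\ a finite cutset we get $\zeta\ge\sup_k\zeta_k$. For $v\in\Pi_A^{(k)}$ the path from $\theta$ to $v$ meets $A$ at exactly $k$ vertices $v|m_1,\dots,v|m_k$, with $\lambda(X_{v|m_i})\le M$, so $\sum_{j=0}^{|v|}\lambda(X_{v|j})^{-1}T_{v|j}\ge M^{-1}\sum_{i=1}^{k}T_{v|m_i}$. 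Conditionally on $\mathcal F_X:=\sigma(X_w:w\in\mathbb{T})$ --- which determines $\Pi_A^{(k)}$ and the vertices $v|m_i$ --- the clocks $\{T_w\}$ are i.i.d.\ mean-one exponentials, so each $\sum_{i=1}^{k}T_{v|m_i}$ is $\Gamma(k,1)$-distributed and $\mathbb{P}(\sum_{i=1}^{k}T_{v|m_i}<Mt\mid\mathcal F_X)\le (Mt)^{k}/k!$. A union bound over $\Pi_A^{(k)}$ followed by taking expectations gives
\[
 \mathbb{P}_a(\zeta_k<t)\ \le\ \mathbb{E}_a\bigl[\mathrm{card}\,\Pi_A^{(k)}\bigr]\,\frac{(Mt)^{k}}{k!}\ \le\ \frac{(\mu(a)Mt)^{k}}{k!},
\]
so $\sum_{k\ge1}\mathbb{P}_a(\zeta_k<t)\le e^{\mu(a)Mt}<\infty$. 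By Borel--Cantelli, a.s.\ $\zeta_k\ge t$ for all large $k$, hence $\zeta\ge\sup_k\zeta_k\ge t$ a.s.; letting $t\to\infty$ gives $\mathbb{P}_a(\zeta=\infty)=1$ for every $a\in S$.

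\emph{Main obstacle.} The delicate point is the dual role of the recurrent set. Feeding \autoref{cutest-red-thm} the geometric cutset bound requires $\psi$ bounded on $A$, and converting the passage cutsets into a time lower bound requires $\lambda$ bounded on $A$; both push $A$ toward a bounded set such as $(0,c]$ rather than a sublevel set $\lambda^{-1}((0,c'])$ as in \autoref{main} (which would be the wrong shape for an arbitrary locally bounded $\lambda$), and it is exactly the hypothesis $\mathbb{E}[R_1^{b}]<\tfrac12$, i.e.\ $r>2$, that makes every such $(0,c]$ cutset-recurrent via the moment estimate of Step 1. A second, more routine, subtlety is that when $\mu(a)>1$ the cutsets grow, so a single-path ($s$ fixed) argument does not suffice; one needs the union bound over $\Pi_A^{(k)}$ together with the factorial decay $\mathbb{P}(\Gamma(k,1)<Mt)\le(Mt)^{k}/k!$ overcoming the at-most-geometric growth $\mu(a)^{k}$ --- this is where subcriticality is genuinely used.
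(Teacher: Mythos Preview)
Your proof is correct, and it takes a genuinely different route from the paper in both steps.

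\emph{Step 1.} The paper derives the same bound $I_n(a,c)\le (a/c)^{b}r^{-n}$, but via an induction on the recursion $I_n(a,t)=\int_{t/a}^\infty I_{n-1}(a,t/x)\,p(1,dx)$. Your one-line argument --- drop all but the last constraint and use Markov's inequality on $X_{s|n}^{\,b}=a^{b}\prod_{i=1}^n R_i^{\,b}$ --- is strictly simpler and yields the identical estimate.

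\emph{Step 2.} Here the paper simply invokes \autoref{main}/\autoref{main_cor} as a black box; the proof of that theorem builds a \emph{reduced tree} whose $k$-th generation is $\Pi_A^{(k)}$, and then runs a second inspection process (\autoref{24201}) on that tree to produce a further sequence of cutsets. You bypass this machinery entirely: once \autoref{cutest-red-thm} gives $\mathbb{E}_a[\mathrm{card}\,\Pi_A^{(k)}]\le\mu(a)^k$, you combine the union bound over $\Pi_A^{(k)}$ with the Gamma tail estimate $\mathbb{P}(\Gamma(k,1)<Mt)\le(Mt)^k/k!$ and Borel--Cantelli. The factorial beats the geometric growth, and this is all that is needed.

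What each approach buys: the paper's route is modular --- it isolates a general non-explosion theorem (\autoref{main}) that can be quoted elsewhere. Your route is more elementary and self-contained for this particular proposition, and your ``Main obstacle'' paragraph correctly identifies why going straight through \autoref{main} (which uses $A=\lambda^{-1}((0,c])$) is awkward when $\lambda$ is merely locally bounded; the paper handles this via \autoref{main_cor}, while you avoid the issue altogether by working with $A=(0,c]\cap S$ and \autoref{cutest-red-thm} directly. One cosmetic point: the formula $\mu(a)=\max\{1,a^b/c^b\}$ follows the \emph{statement} of \autoref{cutest-red-thm}(ii), but the proof there actually gives $\mu(a)=2+M(a)\tfrac{2r}{r-2}$ with $M(a)=\sup_{A\cup\{a\}}\psi$; either way $\mu(a)<\infty$, which is all your Borel--Cantelli step needs.
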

The proof of this result is given in \autoref{selfsimilarsection}.
The main criterion for a.s.\ explosion of DSY cascades is as follows.  
\begin{thm}\label{expcriterion}
Let $\{\lambda(X_v)^{-1}T_v\}_{v\in\mathbb{T}}$ be a DSY cascade in which $\{X_v\}_{v\in\mathbb{T}}$ satisfies \textup{(A), (B), (D)}. Put $Z=\max\{\lambda(X_1),\lambda(X_2)\}$. If there exists a constant $\kappa\in (0,1)$ such that
\[\mathbb{E}_a\left[\frac{1}{Z}\right]\le \frac{\kappa}{\lambda(a)}\ \ \ \forall\, a\in S\]
then ${{\mathbb{E}}_{a}}\zeta \le {\lambda (a)^{-1}}{(1-\kappa)^{-1}}$. In particular, the cascade is a.s.\ explosive for any initial state $X_\theta=a\in S$.
\end{thm}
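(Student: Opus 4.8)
The plan is to construct, via a greedy algorithm, a single random path $S = (S_1, S_2, \dots) \in \boundary$ along which the passage times sum to something with finite expectation, and then observe that $\zeta$ — being the infimum over all paths — is bounded above by this sum. Concretely, define $S$ inductively: $S|0 = \theta$, and having reached a vertex $v = S|n$, let $S|{(n+1)}$ be whichever of the two children $v*1, v*2$ has the larger intensity $\lambda(X_{v*i})$ (breaking ties arbitrarily, say in favor of $v*1$). By construction, along this path the intensity at step $j$ is $\lambda(X_{S|j}) = \max\{\lambda(X_{w*1}), \lambda(X_{w*2})\}$ where $w = S|{(j-1)}$, i.e.\ it is the ``$Z$-variable'' attached to the parent vertex. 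Set
\[
\zeta' = \sum_{j=0}^{\infty} \frac{T_{S|j}}{\lambda(X_{S|j})}.
\]
Since $S \in \boundary$ and the partial sums along $S$ over the first $n$ generations are at least $\min_{|v|=n}\sum_{j=0}^n T_{v|j}/\lambda(X_{v|j})$, we get $\zeta \le \zeta'$ pathwise, hence $\mathbb{E}_a \zeta \le \mathbb{E}_a \zeta'$.

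The second step is to estimate $\mathbb{E}_a \zeta'$ by iterating the hypothesis along the greedy path. The key conditioning: given the branching Markov chain $\{X_v\}$, the clocks $\{T_v\}$ are i.i.d.\ mean-one exponentials independent of everything, so $\mathbb{E}[T_{S|j}/\lambda(X_{S|j}) \mid \{X_v\}] = 1/\lambda(X_{S|j})$, giving
\[
\mathbb{E}_a \zeta' = \sum_{j=0}^{\infty} \mathbb{E}_a\!\left[\frac{1}{\lambda(X_{S|j})}\right].
\]
For $j \ge 1$, writing $W_j = S|{(j-1)}$ for the parent and using property (A) (time-homogeneous Markov along the path) together with property (D) (the conditional law of $(X_{w*1}, X_{w*2})$ given $X_w$ does not depend on $w$) and the tower property, condition on $X_{W_j}$: by the hypothesis $\mathbb{E}_a[1/Z \mid X_{W_j} = x] = \mathbb{E}_x[1/Z] \le \kappa/\lambda(x)$. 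Peeling off one generation at a time, one obtains inductively $\mathbb{E}_a[1/\lambda(X_{S|j})] \le \kappa^j / \lambda(a)$. Summing the geometric series with ratio $\kappa \in (0,1)$ yields $\mathbb{E}_a \zeta \le \mathbb{E}_a \zeta' \le \lambda(a)^{-1}(1-\kappa)^{-1} < \infty$, and a finite-mean random variable is a.s.\ finite, so $\mathbb{P}_a(\zeta < \infty) = 1$.

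The main obstacle I anticipate is making the inductive ``peeling'' step fully rigorous: the greedy path $S$ is not a deterministic path but is itself defined by looking at the chain, so one must be careful that the Markov property still applies after conditioning on the first $j-1$ steps of $S$. The clean way is to argue by strong induction on the number of generations, using property (D) to see that, conditionally on $X_{S|{(j-1)}} = x$, the pair $(X_{S|{(j-1)}*1}, X_{S|{(j-1)}*2})$ has a law depending only on $x$, and that the greedy choice at generation $j$ together with the continuation of the chain below it reproduces a copy of the original setup started from the chosen child's state — so that $\mathbb{E}_a[1/\lambda(X_{S|j}) \mid X_{S|{(j-1)}} = x] = \mathbb{E}_x[1/Z] \le \kappa/\lambda(x)$ exactly. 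Properties (A), (B), (D) are precisely what guarantee this self-reproducing structure; property (C) (full conditional independence of the two subtrees) is notably \emph{not} needed here, which is why it is absent from the hypotheses of this theorem. Once that one conditional inequality is established, the rest is the geometric summation already sketched.
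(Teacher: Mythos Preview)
Your proposal is correct and follows essentially the same approach as the paper: construct the greedy ``stepwise maximal intensity'' path, bound $\zeta$ by the sum along that path, use independence of the clocks to reduce to $\sum_j \mathbb{E}_a[1/\lambda(X_{S|j})]$, and then peel off one generation at a time via property (D) to get the geometric bound $\kappa^j/\lambda(a)$. You have also correctly flagged the one delicate point---that the greedy path is itself $\{X_v\}$-measurable---and the remedy (condition on $X_{S|(j-1)}$ and invoke (D)) is exactly what the paper does; your observation that (C) is not needed is likewise on target.
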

The proof this theorem is given in \autoref{explosioncriteria}. In \autoref{NSEExplosion}, it is shown that the explosion criterion is satisfied by the self-similar DSY cascades associated with the 3-dimensional Navier-Stokes equations. 
We also show that for large spatial dimensions ($d\geq 12$), the self-similar DSY cascades are non-explosive. The range $3<d<12$ remains inconclusive to us. Nevertheless, we show that the explosion event in a self-similar DSY cascade is a $0-1$ event. More generally, one has the following.  
 \begin{thm}\label{01}
Let $\{\lambda(X_v)^{-1}T_v\}_{v\in\mathbb{T}}$ be a DSY cascade in which $\{X_v\}_{v\in\mathbb{T}}$ has property \textup{(A), (B), (C), (D)}. Then either $\mathbb{P}_x(\zeta=\infty)=0$ for all $x\in S$ or $\mathbb{P}_x(\zeta=\infty)=1$ for all $x\in S$ if one of the following conditions holds.
\begin{enumerate}[(i)]
\item $\{\lambda(X_v)^{-1}T_v\}_{v\in\mathbb{T}}$ is a self-similar DSY cascade and $\lambda$ is a multiplicative function, i.e.\ $\lambda(xy)=\lambda(x)\lambda(y)$ for all $x,y\in S$.
\item Along each path $s\in\partial\mathbb{T}$, $\{X_{s|j}\}_{j\ge0}$ is an ergodic time-reversible Markov chain on a countable state space $S$.
\item Along each path $s\in\partial\mathbb{T}$, $\{X_{s|j}\}_{j\ge0}$ is time-reversible with respect to a probability measure $\gamma(dx)$ (i.e.\ $\gamma(dx)p(x,dy)=\gamma(dy)p(y,dx)$ as measures on $S\times S$) which satisfies $\gamma(dy)\ll p(x,dy)\ll\gamma(dy)$ for every $x\in S$.
\end{enumerate}
\end{thm}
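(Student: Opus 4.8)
The plan is to establish a $0$–$1$ law by exhibiting, in each of the three cases, a transformation of the tree that preserves the law of the branching Markov chain while changing the initial state, together with an argument that the explosion probability is monotone (or at least comparable) under changes of initial state. The core structural input is the \emph{branching recursion}: because of (A), (B), (C) (hence (D)), conditionally on $(X_1,X_2)$ the two subtrees rooted at the children $1$ and $2$ are independent copies of the original cascade started from $X_1$ and $X_2$ respectively, and each subtree explodes in finite time iff the whole cascade does (the root clock $T_\theta/\lambda(a)$ is a.s.\ finite). Writing $h(x)=\mathbb{P}_x(\zeta<\infty)$, this yields the functional identity
\[
h(a)=\mathbb{E}_a\bigl[h(X_1)h(X_2)\bigr]\qquad\forall\,a\in S,
\]
and since $\zeta<\infty$ on a subtree implies $\zeta<\infty$ globally, $h$ is $1$ on the event that \emph{some} path reaches a region from which explosion is certain. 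The task in each case is to promote "$h(a)>0$ for some $a$'' to "$h\equiv1$''.

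First I would handle case (i). For a self-similar cascade with $\lambda$ multiplicative, scaling the initial state $a\mapsto ca$ (for $c\in S$) multiplies every holding time $T_{v|j}/\lambda(X_{v|j})$ along every path by the same factor $\lambda(c)^{-1}$, because $X_{v|j}\mapsto cX_{v|j}$ in distribution by self-similarity and $\lambda(cX_{v|j})=\lambda(c)\lambda(X_{v|j})$; hence $\zeta\mapsto\lambda(c)^{-1}\zeta$ in distribution. Therefore $\mathbb{P}_a(\zeta<\infty)$ does not depend on $a\in S$ at all, which is even stronger than the claimed dichotomy — the constant value is either $0$ or positive, and I would combine the branching recursion with a $0$–$1$ argument (e.g.\ Kolmogorov, since $[\zeta<\infty]$ is a tail event with respect to the independent clocks $T_v$, or directly from $h(a)=h(a)^2$ once $h$ is constant) to upgrade "positive'' to "$1$''.

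For cases (ii) and (iii) I would argue via irreducibility/equivalence of measures along a single path. In case (iii), since $\gamma(dy)\ll p(x,dy)\ll\gamma(dy)$, any two states communicate in one step with positive density, so starting from $a$ the chain $\{X_{s|j}\}$ along the leftmost path (say) hits any set of positive $\gamma$-measure with positive probability in finitely many steps; by the branching recursion, $h(a)\ge \mathbb{P}_a(\text{leftmost path reaches a state }b\text{ with }h(b)>0\text{ in finitely many steps})\cdot(\text{const})$, so $\{h>0\}$ is either empty or all of $S$ up to $\gamma$-null sets, and then reversibility plus the recursion forces $h\equiv1$ on $\{h>0\}$: indeed if $h(b)>0$ for a set of positive measure, $h(a)=\mathbb{E}_a[h(X_1)h(X_2)]$ and the time-reversibility lets one run the comparison symmetrically to conclude $h=1$. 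Case (ii) is the countable-state specialization: ergodicity gives irreducibility (all states communicate), so $\{h>0\}$ is all of $S$ or empty by the recursion, and reversibility with respect to the stationary measure plays the same role as in (iii).

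The main obstacle I anticipate is the final upgrade from "$h>0$ everywhere'' to "$h\equiv1$'': the recursion $h(a)=\mathbb{E}_a[h(X_1)h(X_2)]$ alone does not force $h$ to be $\{0,1\}$-valued for a general sub-probability function, so one genuinely needs to use the reversibility (or, in case (i), the exact scaling) to close the argument — likely by showing that $g=1-h$ satisfies a supermartingale-type inequality along a reversed path or that $\{h=1\}$ is both "absorbing forward'' and, by reversibility, "absorbing backward'', hence all of $S$ once nonempty. I would also need to be careful that "finitely many steps to reach a good state'' genuinely produces a finite \emph{cutset} contributing to explosion rather than just one path; here the conditional-independence structure (C)/(D) lets one restart the cascade at the good state along that path and invoke that explosion from a good state has positive probability, which propagates back up the finitely many edges without difficulty.
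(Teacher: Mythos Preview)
Your branching recursion is stated for the wrong function. Since $\zeta = T_\theta/\lambda(a) + \min\{\zeta^{(1)},\zeta^{(2)}\}$, the event $[\zeta<\infty]$ equals $[\zeta^{(1)}<\infty]\cup[\zeta^{(2)}<\infty]$, not the intersection. Hence the \emph{product} recursion holds for the non-explosion probability $f(x)=\mathbb{P}_x(\zeta=\infty)$:
\[
f(a)=\mathbb{E}_a\bigl[f(X_1)f(X_2)\bigr],
\]
whereas your $h=1-f$ satisfies $h(a)=\mathbb{E}_a[h(X_1)+h(X_2)-h(X_1)h(X_2)]$. For case~(i) this mix-up is harmless: your scaling argument correctly shows $f$ (equivalently $h$) is constant, and then $f=f^2$ forces $f\in\{0,1\}$; this is exactly the paper's proof.

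For cases (ii) and (iii), however, you are missing the decisive step, and you yourself flag it as the ``main obstacle''. The paper does not try to propagate $\{h>0\}$ via irreducibility and then separately upgrade to $h\equiv1$; instead it exploits the product form of the recursion for $f$ together with stationarity of $\gamma$. From $f(a)=\mathbb{E}_a[f(X_1)f(X_2)]$ and $2uv\le u^2+v^2$ one gets $2f(a)\le \mathbb{E}_a[f(X_1)^2]+\mathbb{E}_a[f(X_2)^2]$; integrating against $\gamma(da)$ and using that $\gamma$ is stationary for the one-step transition yields
\[
2\,\mathbb{E}_\gamma[f(X_\theta)]\;\le\;2\,\mathbb{E}_\gamma[f(X_\theta)^2],
\]
which for a $[0,1]$-valued $f$ forces $f\in\{0,1\}$ $\gamma$-a.e. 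Only \emph{after} this does one use the measure equivalence $\gamma\ll p(x,\cdot)\ll\gamma$ (or, in case (ii), full support of $\gamma$ plus irreducibility) to pass from ``$\gamma$-a.e.'' to ``every $x$'', via the simple one-step inequalities $f(a)\le\mathbb{E}_a[f(X_1)]$ and $f(a)\ge\mathbb{E}_a[f(X_1)]+\mathbb{E}_a[f(X_2)]-1$.

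Your alternative sketch---absorbing sets forward and, by reversibility, backward, or a supermartingale for $g=1-h$---does not close. For instance, $f(a)=1$ only gives $(1-f(X_1))(1-f(X_2))=0$ a.s., i.e.\ \emph{at least one} child lands in $\{f=1\}$, so $\{f=1\}$ is not absorbing along a single deterministic path; and while $f(X_{s|j})$ is indeed a bounded supermartingale, its a.s.\ limit need not identify $f$ pointwise in the continuous-state setting of (iii). The integration-against-$\gamma$ trick is the missing idea.
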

The proof of this result is given in \autoref{zeroone}. Applications of the non-explosion and explosion criteria are given in the last three sections of the paper.  In  \autoref{examples}, we focus on examples originating from probability, whereas \autoref{diff_eq_examples} gives examples originating from differential equations.  The applications to NSE are of particular interest and are mentioned in a separated section (\autoref{NSEExplosion}).
\section{Preliminary notions}\label{cutset-def-sec}
To prepare for the proofs of the main results, we introduce some preliminary notions on a general rooted tree (see also \cite{lyons90, lyons92}, \cite[Sec.\ 2.5]{lyons}).

Let $\mathcal{T}\subset\mathbb{V} = \{\theta\}\cup(\cup_{n\ge 1}\mathbb{N}^n)$ be a connected locally finite random tree rooted at $\theta$. 
Each vertex $v=(v_1,\ldots,v_n)\in\mathcal{T}$ is connected to the root by a unique path, so one can identify a vertex with the path connecting it to the root.  For a finite or infinite path $s=(s_1,s_2,\ldots)$, we denote by $v* s$ the path obtained by appending $s$ to $v$:
\[v* s=(v_1,\ldots,v_n,s_1,s_2,\ldots)\]
The length of a path $v=(v_1,\ldots,v_n)$, or the genealogical height of vertex $v$, is $|v|=n$. For $1\le j\le n$, the truncation of $v$ up to the $j$'th generation is $v|j=(v_1,\ldots,v_j)$. We use the convention that $v|0=\theta$ and $|\theta|=0$. The boundary of $\mathcal{T}$ is defined as the set of infinite paths:
\[\partial\mathcal{T}=\{s\in\mathbb{N}^\infty:~s|j\in\mathcal{T},\ \forall\,j\ge 0\}.\]
\begin{rem}
One can interpret $\partial\mathcal{T}$ as the boundary of $\mathcal{T}$ in the metric space $\mathcal{T}\cup\mathbb{N}^\infty$ endowed with the metric $d(v,w)=\sum^{\infty}_{j=1}2^{-j}(1-\delta_{\alpha_j,\beta_j})$ where $\delta_{x,y}$ is the standard Kronecker delta and
\[{{\alpha }_{j}}=\left\{ \begin{array}{*{35}{r}}
   {{v}_{j}} & \text{if} & j\le |v|  \\
   0 & \text{if} & j>|v|  \\
\end{array} \right.,\ \ {{\beta }_{j}}=\left\{ \begin{array}{*{35}{r}}
   {{w}_{j}} & \text{if} & j\le |w|  \\
   0 & \text{if} & j>|w|  \\
\end{array} \right..\]
\end{rem}
\begin{defi}[cut-sets]
A finite set of vertices $V\subset \mathcal{T}\backslash\{\theta\}$ is called a \emph{cut-set} of $\mathcal{T}$ if for each path $s\in\partial \mathcal{T}$, there exists unique $j\ge 1$ such that $s|j\in V$.
\end{defi}
Intuitively, a cut-set is a set of vertices that separates the root from the boundary.
\begin{defi}[Passage sets]
Let $\{Y_v\}_{v\in\mathbb{V}}$ be a tree-indexed family of random variables independent of $\mathcal{T}$ taking values on a measurable state space $({S},\mathscr{S})$. For $A\in\mathscr{S}$, we define a sequence of random sets $\{\Pi_A^{(n)}\}_{n\ge 1}$ depending on $\mathcal{T}$ as follows.
\[\Pi_A^{(1)}  =\{v\in\mathcal{T}\backslash\{\theta\}:\ Y_v\in A,\ Y_{v|j}\not\in A~~\forall\, 0<j<|v|\},\]
\[\Pi_A^{(n)}\ =\{v\in\mathcal{T}\backslash\{\theta\}:\ Y_v\in A,\ \exists 1\le j< |v|,\ v|j\in\Pi_A^{(n-1)},\ Y_{v|i}\not\in A\ \forall\, j<i<|v|\}\]
for all $n\ge 2$. We call $\Pi_A^{(n)}$ the $n$'th passage set of $\{Y_v\}_{v\in \mathcal{T}}$ through $A$.
\end{defi}

\begin{defi}[Cut-set recurrence]\label{cutset-rec-def}
Let $\{Y_v\}_{v\in\mathbb{V}}$ be a tree-indexed family of random variables on a measurable state space $({S},\mathscr{S})$. A set $A\in\mathscr{S}$ is said to be \emph{cut-set recurrent} with respect to $(\{Y_v\}_{v\in\mathbb{V}}, \mathcal{T})$ if each passage set 
$\Pi_A^{(1)}$, $\Pi_A^{(2)}$, $\Pi_A^{(3)}$, \ldots is almost surely a cut-set. In this case, $\Pi_A^{(n)}$ is called the $n$'th \emph{passage cut-set}.
\end{defi}
For the sake of simplicity, we will write the structure $(\{Y_v\}_{v\in\mathbb{V}}, \mathcal{T})$ simply as $\{Y_v\}_{v\in\mathcal{T}}$.
\section{Proof of the \autoref{cutest-red-thm}}\label{proof}
With the terminology introduced in the previous section, the proof of \autoref{cutest-red-thm} uses the decay of $I_n\lesssim r^{-n}$ and an inspection process on whether $X_v\notin A$ to construct a sequence of passage cut-sets. 
Specifically 
at each vertex $v\in\mathbb{T}$, starting from the root $\theta$, we inspect each offspring $u\in\{v*1,\,v*2\}$ of $v$. If $X_u\notin A$  then $u$ passes the inspection. Otherwise, if $X_u\in A$, $u$ does not pass the inspection and the inspection process along the path containing $u$ stops at $u$. 
For each offspring of $v$ that passes the inspection, we proceed to inspect its own offspring. 
For example, if  $X_{v* 1}\notin A$ and $X_{v* 2} \notin A$ then we inspect the vertices $v* 11$, $v* 12$, $v* 21$, $v* 22$. If $X_{v* 1}\notin A$ and $X_{v* 2}\in A$, we only inspect $v* 11$ and $v* 12$. In this manner, the inspection process either continues indefinitely or stops after finitely many steps. Note that we do not inspect the root: $\theta$ is considered passing the inspection whether or not $X_\theta\notin A$.

For a vertex $v\in \mathbb{T}$ that passes the inspection, we denote by $\mathcal{O}_v$ the number of offspring of $v$ that passes the inspection. The distribution of $\mathcal{O}_{v=\theta}$ is given by
\begin{eqnarray*}
\mathbb{P}_a\left(\mathcal{O}_\theta=2\right)&=&\mathbb{P}_a\left(X_1\notin A,\,X_2\notin A\right),\\
\mathbb{P}_a(\mathcal{O}_\theta=1)&=&\mathbb{P}_a(X_1\notin A,\,X_2 \in A)+\mathbb{P}_a(X_1\in A,\,X_2\notin A),\\
\mathbb{P}_a(\mathcal{O}_\theta=0)&=&\mathbb{P}_a(X_1\in A,\,X_2 \in A).
\end{eqnarray*}

For $v\in\mathbb{T}\setminus\{\theta\}$, let  $B_v = [X_{v|1}\notin A, X_{v|2}\notin A,\dots, X_v\notin A]$ 
so that $I_n(a,A)= \mathbb{P}_a(B_v)$. Note that this probability only depends on $n=|v|$ due to properties (A) and (B).
Let 
\begin{equation}
\label{Nofa}
N(a)=\inf\{n\ge 1:~I_n(a,A)=0\}
\end{equation}
which could be infinity. For each vertex $v$ with $1\le |v|=n< N(a)$, the number of offspring passing the inspection has a distribution
\begin{eqnarray}
\label{eq:510211}\mathbb{P}_a(\mathcal{O}_v=2)&=&\mathbb{P}_a(X_{v* 1}\notin A,\,\,X_{v* 2}\notin A\ |\ B_v),\\
\label{eq:429211}
\mathbb{P}_a({{\mathcal{O}}_{v}}=1)&=&\mathbb{P}_a(X_{v* 1}\notin A,X_{v* 2}\in A\ |\ B_v)
+\mathbb{P}_a(X_{v* 1}\in A, X_{v* 2}\notin A\ |\ B_v),\\
\label{eq:510212}\mathbb{P}_a(\mathcal{O}_v=0)&=&\mathbb{P}_a(X_{v* 1}\in A,\,\,X_{v* 2}\in A \ |\ B_v).
\end{eqnarray}

Invoking Properties (A) and (B) once again, 
one can write $\mathcal{O}_v \sim \mathcal{O}_n$ for $n=|v|.$ 
Denote $p_{n,i}(a)=\mathbb{P}_a(\mathcal{O}_n=i)$ for $i=0,1,2$. Let $Z_n$ be the total number of individuals at generation $n$ that pass the inspection. Then $Z_0= 1$ and
\[{{Z}_{n+1}}=\sum\limits_{j=1}^{{{Z}_{n}}}{{\mathcal{O}_{n,j}}}\ \ \ \forall\, 0\le n<N(a)\]
where $\mathcal{O}_{n,1}$, $\mathcal{O}_{n,2}$, $\mathcal{O}_{n,3}$,\ldots are i.i.d copies of $\mathcal{O}_n$. The inspection process can be viewed as a nonhomogeneous Galton-Watson process with the offspring distribution given above. We will show this process stops eventually. 

\begin{lem}\label{stop}
Under the assumption of \autoref{cutest-red-thm},
for every $a>0$, the inspection process starting at $X_\theta=a$ almost surely stops after finitely many steps.
\end{lem}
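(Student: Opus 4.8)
The plan is to show that the nonhomogeneous Galton-Watson process $\{Z_n\}$ built from the inspection is eventually absorbed at $0$, i.e.\ $Z_n = 0$ for all large $n$ almost surely. Since $Z_n$ is integer-valued, it suffices to prove $\mathbb{E}_a[Z_n] \to 0$ as $n \to \infty$; then $\mathbb{P}_a(Z_n \ge 1) \le \mathbb{E}_a[Z_n] \to 0$, and because $[Z_n \ge 1]$ is a decreasing sequence of events (once the process dies it stays dead), $\mathbb{P}_a(Z_n \ge 1 \text{ for all } n) = \lim_n \mathbb{P}_a(Z_n \ge 1) = 0$, which is exactly the claim.

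First I would compute $\mathbb{E}_a[Z_n]$ directly rather than through per-generation mean offspring numbers, because the nonhomogeneity makes the latter bookkeeping awkward. The key observation is that $Z_n = \sum_{|v|=n} \IND{B_v}$ (the number of generation-$n$ vertices all of whose ancestors, excluding $\theta$, avoid $A$), so by linearity and Properties (A), (B),
\[
\mathbb{E}_a[Z_n] = \sum_{|v|=n} \mathbb{P}_a(B_v) = 2^n\, I_n(a,A).
\]
Now I invoke condition (E): $I_n(a,A) \le \psi(a)\, r^{-n}$ with $r > 2$. Hence
\[
\mathbb{E}_a[Z_n] \le \psi(a)\left(\frac{2}{r}\right)^n \xrightarrow[n\to\infty]{} 0,
\]
since $2/r < 1$. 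This gives the conclusion by the argument of the previous paragraph.

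I would also record the side remark that this same estimate immediately yields $\sum_n \mathbb{E}_a[Z_n] \le \psi(a)\frac{r}{r-2} < \infty$, so in fact $\sum_n Z_n < \infty$ a.s., which is the stronger statement that the whole inspected subtree is finite — this is what will later be needed to bound $\mathrm{card}\,\Pi_A^{(1)}$ in \autoref{cutest-red-thm}(ii), since each vertex of $\Pi_A^{(1)}$ is the child of a vertex counted by some $Z_n$, giving $\mathrm{card}\,\Pi_A^{(1)} \le 2 + 2\sum_{n\ge 1} Z_n$.

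The only real subtlety — and the step I would be most careful about — is the identity $Z_n = \sum_{|v|=n}\IND{B_v}$ and the claim that $\mathbb{E}_a[Z_n] = 2^n I_n(a,A)$. One must check that $Z_n$, defined recursively through the inspection as $Z_{n+1} = \sum_{j=1}^{Z_n}\mathcal{O}_{n,j}$, genuinely counts precisely those $v$ with $|v|=n$ for which $X_{v|1},\dots,X_{v|n}\notin A$: a vertex passes the inspection iff it is reached, and it is reached iff its parent passed, so by induction $v$ is inspected-and-passing iff none of $X_{v|1},\dots,X_{v|n}$ lies in $A$, which is $B_v$. (The root is exempted by convention, consistent with $B_v$ not constraining $X_\theta$.) Once that combinatorial identity is pinned down, the Markov/homogeneity properties (A), (B) make each term equal to $I_n(a,A)$ regardless of the path, and the rest is the elementary geometric bound above. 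No appeal to the finer structure of the offspring distributions $p_{n,i}(a)$ is needed.
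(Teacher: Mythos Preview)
Your proof is correct and reaches the same key identity as the paper, namely $\mathbb{E}_a[Z_n]=2^nI_n(a,A)\le\psi(a)(2/r)^n$, but you obtain it more directly. The paper derives this by computing the mean offspring numbers $\mu_n=\mathbb{E}\mathcal{O}_n=2I_{n+1}/I_n$ and then invoking Wald's identity to telescope $\mathbb{E}Z_n=\mu_{n-1}\cdots\mu_0=2^nI_n$; it also splits into the cases $N(a)<\infty$ and $N(a)=\infty$, using Fatou's lemma in the latter to conclude $Z_\infty:=\liminf Z_n=0$ a.s. Your combinatorial identity $Z_n=\sum_{|v|=n}\IND{B_v}$ bypasses Wald entirely and yields $\mathbb{E}_a[Z_n]=2^nI_n$ in one line, and your monotonicity argument ($[Z_{n+1}\ge1]\subset[Z_n\ge1]$) replaces the Fatou step while automatically absorbing the $N(a)<\infty$ case. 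Both arguments are short; yours is marginally cleaner, while the paper's has the advantage of making the per-generation offspring means $\mu_n$ explicit, which matches the ``nonhomogeneous Galton--Watson'' framing set up just before the lemma.
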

\begin{proof}
We have
\[\mu_0:=\mathbb{E}\mathcal{O}_0=2\mathbb{P}_a(\mathcal{O}_0=2)+\mathbb{P}_a(\mathcal{O}_0=1)=\mathbb{P}_a(X_1\notin A)+\mathbb{P}_a(X_2\notin A)=2I_1.\]

If $N(a)=1$ then {$\mathbb{P}_a(X_1\notin A)=\mathbb{P}_a(X_2\notin A)=0$}. In this case, the inspection process stops at the root. Consider the case $2\le N(a)\le\infty$. For $1\le n<N(a)$, denote the two terms on the right hand side of \eqref{eq:429211} by $\bar{p}_{1,n}$ and $\tilde{p}_{1,n}$. 
\begin{eqnarray*}{{\mu }_{n}}:=\mathbb{E}{\mathcal{O}_{n}}&=&2{{p}_{n,2}}+{\bar{p}_{n,1}}+\tilde{p}_{n,1}=({{p}_{n,2}}+{\bar{p}_{n,1}})+({{p}_{n,2}}+{\tilde{p}_{n,1}})\\
&=&\mathbb{P}(X_{v* 1}\notin A\,|\,B_v)+\mathbb{P}(X_{v* 2}\notin A\,|\,B_v)\\
&=&2\mathbb{P}(X_{v* 1}\notin A\,|\,B_v)\\
&=&2\frac{{{I}_{n+1}}}{{{I}_{n}}}.\end{eqnarray*}

By Wald's identity, $\mathbb{E}{{Z}_{n}}=\mu_{n-1}\mathbb{E}{{Z}_{n-1}}$.
Applying this identity consecutively, we get
\begin{equation}
\label{eq:925201}
\mathbb{E}{{Z}_{n}}={{\mu }_{n-1}}...\mu_1\mathbb{E}Z_0=\mu_{n-1}\ldots\mu_1\mu_0={{2}^{n}}{I_{n}}\le 
\psi(a)
\left(\frac{2}{r}\right)^n~~~\forall\,1\le n< N(a),\,n<\infty.
\end{equation}
If $N(a)<\infty$ then $\mathbb{P}_a(X_{v|1}\notin A,\,X_{v|2}\notin A,\ldots,X_{v|n}\notin A)=0$ for all $v\in\mathbb{T}$, $|v|=n\ge N(a)$. Hence, the inspection process stops almost surely after $N(a)$ generations. Consider the case $N(a)=\infty$. Put $Z_\infty=\liminf Z_n$. By Fatou's lemma,
\[\mathbb{E}{{Z}_{\infty }}\le \underset{n\to \infty }{\mathop{\liminf }}\,\mathbb{E}{{Z}_{n}}\le\underset{n\to \infty }{\mathop{\liminf }}\,
\psi(a)
\left(\frac{2}{r}\right)^n=0.\]
Hence, $Z_\infty=0$ a.s. Because $Z_n$'s are nonnegative integers, $Z_n=0$ a.s. for some random value of $n$. 
\end{proof} 

To complete the proof of Part (i) of \autoref{cutest-red-thm},
we use \autoref{stop} to show that the 
set $A$
is cut-set recurrent with respect to the branching Markov chain $\{X_v\}_{v\in\mathbb{T}}$.
By definition, the first passage set $\Pi_A^{(1)}$ consists, for each $s\in\partial\mathbb{T}$, of the first vertex $v\neq\theta$ such that 
{\color{blue}$X_{v}\in A$} 
(see \autoref{cutsets}). In other words, it consists of the vertices at which the inspection process stops. By \autoref{stop}, $\Pi_A^{(1)}$ is, a.s, a finite set and, thus, must be a cut-set. 

Suppose that for some $k\ge 1$, the passage set $\Pi_A^{(k)}$ is a cut-set. 
For each $u\in\mathbb{T}$, denote by $\mathbb{T}^{(u)}$ the subtree of $\mathbb{T}$ rooted at $u$ and write $X^{(u)}_v=X_{u* v}$ for $v\in\mathbb{T}$.
By conditions (A) and (B), we have that condition (E) is also satisfied by the branching Markov chain $X^{(u)}_v$.  Indeed, the analogue to \eqref{I_n} for this branching Markov process is
\begin{eqnarray*}
I_n(a',A;X^{(u)}) &\equiv& 
\mathbb{P}(X_{u*s|(|u|+1)}\notin A, X_{u*s|(|u|+2)}\notin A, \dots, X_{u*s|(|u|+n)}\notin A|X_u=a') \\
&=&\mathbb{P}_{a'}(X_{s|1}\notin A, X_{s|2}\notin A, \dots, X_{s|n}\notin A) \\
&=&
I_n(a',A)
\end{eqnarray*}
Given $X_u$, by \autoref{stop} the inspection process on $\mathbb{T}^{(u)}$ must terminate a.s. Thus, for each path $s\in\partial\mathbb{T}$ passing through $u$, there exists a random integer $j$, $|u|< j<\infty$ such that 
$X_{s|j}\in A$.
Let $N_{u,s}$ be the smallest value of such $j$'s. Let 
\begin{equation}\label{Cu}
\Pioneu=\left\{ s|_{{N}_{u ,s}}:\ s\in {\partial{\mathbb{T}}}~~\text{passing through } u \right\}
\end{equation}
corresponding to the first passage set of the tree rooted at $u$ through $A$.
Thus, by \autoref{stop}, $\Pioneu$ is a.s. finite and nonempty and separates boundary paths in $\partial\mathbb{T}$ that pass through $u$ from $u$.
Note that, by definition, the $k+1$'st passage set is given
by 
\begin{equation}
\label{piku}
\Pi_A^{(k+1)}=\bigcup_{u\in\Pi_A^{(k)}}\Pioneu
\end{equation}
which is nonempty, finite almost surely. Since $\Pi_A^{(k)}$ is a cut-set, $\Pi_A^{(k+1)}$ is also a cut-set.
We conclude that 
$A$ is cut-set recurrent thus completing the proof Part (i) of \autoref{cutest-red-thm}.

\begin{figure}
  \centering
  \includegraphics[scale=1]{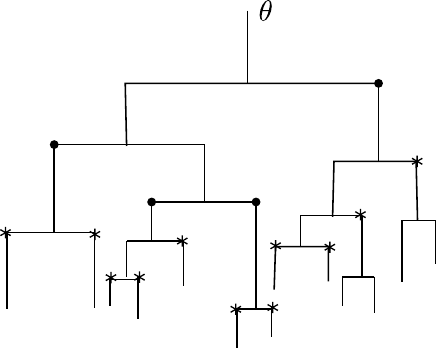}
  \caption{$\Pi_A^{(1)}$ consists of the bold dots. $\Pi_A^{(2)}$ consists of the stars.}
  \label{cutsets}
\end{figure}

%
%
%



%
%
%
%

We begin the proof of Part (ii) of  \autoref{cutest-red-thm} by showing the estimate for the mean cardinality of $\Pi_A^{(1)}$, namely
\begin{equation}\label{eq:510214}\mathbb{E}_a[{\rm card}\,\Pi_A^{(1)}]\le2+\psi(a)\frac{2r}{r-2}\end{equation}
and use induction for the other estimate.
With $N(a)$ defined in \eqref{Nofa}, if $N(a)=1$ then 
$\Pi_A^{(1)}=\{1,2\}$
a.s. and $\eqref{eq:510214}$ holds. Consider the case $N(a)\ge 2$. The mean total number of individuals passing the inspection process is
\[\mathbb{E}_a\left[ \sum\limits_{n=0}^{N(a) }{{{Z}_{n}}} \right]=\sum\limits_{n=0}^{N(a) }{\mathbb{E}{{Z}_{n}}}\le 1+\psi(a)\sum\limits_{n=1}^{N(a) }\left(\frac{2}{r}\right)^n\le 1+\psi(a)\frac{r}{r-2}\,,\]
according to \eqref{eq:925201}. Because each vertex in $\Pi_A^{(1)}$ is an offspring of a vertex that passes the inspection process, card\,$\Pi_A^{(1)}$ is at most twice the total number of vertices passing the inspection process. Therefore,
\[\mathbb{E}_a[{\rm card}\,{\Pi_A^{(1)}}]\le 2\mathbb{E}\left[ \sum\limits_{n=0}^{\infty }{{{Z}_{n}}} \right]\le 2+\psi(a)\frac{2r}{r-2}  .\]
Consider now $u\in\Pi_A^{(k)}$ for some $k\ge 1$, 
so that $X_u=a'\in A.$ 
Then, by properties (A) and (B) of the branching Markov chain we have
\begin{equation}
\nonumber
{{\mathbb{E}}_{a}}[\operatorname{card}(\Pioneu)|X_u=a']={
\mathbb{E}}_{X_u=a'}[\operatorname{card}(\Pioneu)]
=
{{\mathbb{E}}_{{{X}_{\theta }}=a'}}[\operatorname{card}(\Pi^{(1)}_A)]\le 2+\psi(a')\frac{2r}{r-2}~~~
\end{equation}
Recall that we now assume that $\psi$ is bounded on $A$ and that $M(a)=\sup\left\{\psi(x):\,x\in A\cup\{a\}\,\right\}.$
Then
\begin{equation}
\label{eq:923201}
{{\mathbb{E}}_{a}}[\operatorname{card}({\Pioneu})|X_u=a']
\leq
2+M(a)\frac{2r}{r-2} \equiv \mu(a)
\end{equation}
The proof of \autoref{cutest-red-thm} is completed by noting that
\[
\mathbb{E}_a[{\rm card}\,{\Pi_A^{(k)}}] =
\mathbb{E}_a[\mathbb{E}_a[{\rm card}\,{\Pi_A^{(k)}}|\Pi_A^{(k-1)}]] \leq
\mu(a) \mathbb{E}_a[{\rm card}\,{\Pi_A^{(k-1)}}] \leq (\mu(a))^k.
\]

 
\section{Proof of \autoref{main}}\label{Theorem2}

Thanks to \autoref{cutest-red-thm}, we can define a random sequence of nonnegative integers $\{H_k\}$ where
$H_0=1$ and
\begin{equation}
\label{N_k}
H_{k}=\max\{|v|:~v\in\Pi_A^{(k)}\}\ \ \ \forall\,k\in\mathbb{N}.
\end{equation}
One can observe that for any $k\in\mathbb{N}$ and $s\in \partial\mathbb{T}$, there exists $l_k=l_k(s)\in[k,\, H_k)$ satisfying $s|l_k\in\Pi_A^{(k)}$.  In order to establish the non-explosive character of the DSY cascade, note that
for $k\in\mathbb{N}$ and with $n\ge H_k$, we have:
\[{{\zeta}_{n}} = \underset{|v|=n}{\mathop{\min }}\,\sum\limits_{j=1}^{n}\frac{T_{v|j}}{\lambda(X_{v|j})}\ge \underset{|v|=n}{\mathop{\min }}\,\sum\limits_{i=1}^{k}{\frac{{{{{T}}}_{v|{{l}_{i}}}}}{\lambda(X_{v|{{l}_{i}}})}}\ge \frac{1}{c}\underset{|v|=n}{\mathop{\min }}\,\sum\limits_{i=1}^{k}{{{{{T}}}_{v|{{l}_{i}}}}}\]
where 
$l_i=l_i(s_v)$ with $s_v\in\partial\mathbb{T}$ such that $s_v|_{|v|}=v$. In the last inequality we have used that $A=\lambda^{-1}(0,c)$ so that since $X_{v|l_i}\in A$ one has $\lambda(X_{v|l_i})< c.$
Since $\{T_v\}_{v\in\mathbb{T}}$ is independent of  $\{X_v\}_{v\in\mathbb{T}}$, in order to establish that the DSY cascade is not explosive it suffices to show that almost surely,
\begin{equation}\label{eq1:11420}\underset{k\to \infty }{\mathop{\lim }}\,\underset{|v|=H_k}{\mathop{\min }}\,\sum\limits_{i=1}^{k}{{{{{T}}}_{v|{{l}_{i}}}}}=\infty
\end{equation}
For this, we define a random tree, referred as the  
``reduced tree'', consisting of the root $\theta$ and vertices in the passage cut-sets $\Pi_A^{(1)}$, $\Pi_A^{(2)}$, $\Pi_A^{(3)}$, \ldots Throughout this section we will use
\begin{equation}
\label{cardinality}
K_u=\textrm{card}\,\Pioneu
\end{equation}
denote the random number of vertices in the first cut-set of the tree rooted at $u.$
For the reduced tree, the root $\theta$ has $K_\theta=\textrm{card}\,\Pione$ offspring, each of which is a vertex in the cut-set $\Pione$. Each vertex $u$ of the first generation of the reduced tree has $K_u$ offspring, each of which is a vertex in the cut-set $\Pioneu$, and so on (\autoref{reduced-cutset}). Thus, the first generation of the reduced tree consists of vertices in the first cut-set $\Pi_A^{(1)}$. The second generation of this tree consists of vertices in the second cut-set $\Pi_A^{(2)}$. In general, the $k$'th generation of the reduced tree consists of vertices in the $k$'th cut-set $\Pi_A^{(k)}$. 


\begin{figure}
  \centering
  \includegraphics[scale=0.9]{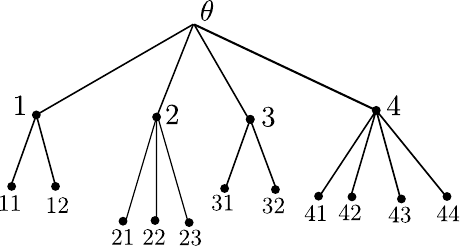}
  \caption{The reduced tree $\mathcal{T}$ corresponding to the binary tree $\mathbb{T}$ in \autoref{cutsets}.}
  \label{reduced-cutset}
\end{figure}

The set of all vertices of the random reduced tree is denoted by $\mathcal{T}\subset\mathbb{V} = \{\theta\}\cup(\cup_{n=1}^\infty \mathbb{N}^n)$. Recall that the configuration of $\mathcal{T}$ is 
independent of the clocks $\{T_v\}_{v\in\mathbb{V}}$ after a natural relabeling of the vertices (\autoref{reduced-cutset}). 
Now \eqref{eq1:11420} becomes a non-explosion problem of a DSY cascade $\{T_v\}_{v\in\mathbb{V}}$ with all intensities equal to 1 on a random tree structure $\mathcal{T}$. 
The uniform bound obtained in Part (ii) of \autoref{cutest-red-thm} on the expected number of offspring of each vertex turns out to be crucial for our approach. For further criteria for non-explosion of a DSY cascade on a random tree structure, see \cite[Sec.\ 4]{part1_2021}.  


To show \eqref{eq1:11420}, 
we will use a cut-set argument similar to the one used in the proof of \autoref{cutest-red-thm}. Let $\varepsilon>0$ be a number to be chosen. Given the random tree $\mathcal{T}$, we start an inspection process of whether $T_v\le \varepsilon$ as follows. At each vertex $v\in\mathcal{T}$, starting from the root $\theta$, we inspect its offspring. If an offspring $u\in\{v* 1,\,v* 2,\ldots,\,v* K_v\}$ satisfies $T_u\le\varepsilon$ then it passes the inspection. Otherwise, it does not pass the inspection and the inspection process along the path containing $u$ stops at $u$. For any vertex that has passed the inspection, we continue to inspect its own offspring. Note that we do not inspect the root: $\theta$ is considered passing the inspection whether or not $T_\theta\le \varepsilon$. In this manner, the inspection process might keep going indefinitely or stop after finitely many steps. To show that the process stops almost surely after finite number of inspections we establish the following more general result.

\begin{lem}\label{24201}
Assume there exists $0<\nu<\infty$ such that 
\[\max\left\{1,\,\mathbb{E}K_\theta,\,\sup_{v\in\mathcal{T}\backslash\{\theta\}}\mathbb{E}\left[{{K}_{v }}|\ \{{K}_{u }\}_{|u|\le |v |-1}\right]\right\}< \nu<\infty.\]
Let $0<\varepsilon < \log\frac{\nu}{\nu-1}.$ Then 
the inspection process 
almost surely stops after finitely many steps and $B=(\varepsilon, \infty)$ is a cut-set recurrent with respect to $(\{T_v\}_{v\in\mathbb{V}}, \mathcal{T}).$
\end{lem}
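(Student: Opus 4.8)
The plan is to reduce the statement to a standard non-explosion argument for a Yule cascade on a Galton–Watson-type tree whose offspring means are uniformly bounded by $\nu$, exactly mirroring the structure of the proof of \autoref{stop}. First I would fix $\varepsilon>0$ as in the hypothesis and, working conditionally on the random tree $\mathcal{T}$, run the inspection process on whether $T_v\le\varepsilon$. Since the clocks $\{T_v\}_{v\in\mathbb{V}}$ are i.i.d.\ mean-one exponentials independent of $\mathcal{T}$, for each vertex $u$ the event $[T_u\le\varepsilon]$ has probability $q:=1-e^{-\varepsilon}$, independently across vertices. Thus, conditionally on $\mathcal{T}$, the set of vertices surviving the inspection up to generation $n$ is obtained by independently thinning the branching structure of $\mathcal{T}$, each edge being kept with probability $q$.

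The core computation is a first-moment bound on $W_n$, the number of vertices at reduced-tree generation $n$ that pass the inspection (so $W_0=1$). Conditioning on the configuration of $\mathcal{T}$ and on the surviving population up to generation $n$, each surviving vertex $v$ at generation $n$ contributes, in expectation, $q\,\mathbb{E}[K_v \mid \{K_u\}_{|u|\le n}] \le q\nu$ survivors at generation $n+1$. Taking expectations and iterating gives $\mathbb{E}[W_n]\le (q\nu)^n = \bigl((1-e^{-\varepsilon})\nu\bigr)^n$. The hypothesis $\varepsilon<\log\frac{\nu}{\nu-1}$ is precisely equivalent to $e^{-\varepsilon}>\frac{\nu-1}{\nu}$, i.e.\ $(1-e^{-\varepsilon})\nu<1$, so $\mathbb{E}[W_n]\to 0$ geometrically. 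By Fatou's lemma applied to $W_\infty=\liminf_n W_n$, as in the proof of \autoref{stop}, we get $W_\infty=0$ a.s., and since the $W_n$ are nonnegative integers, $W_n=0$ eventually, a.s. Hence the inspection process terminates after finitely many steps almost surely.

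Finally I would deduce the cutset-recurrence statement exactly as in the proof of Part (i) of \autoref{cutest-red-thm}. Termination of the inspection means that along every path $s\in\partial\mathcal{T}$ there is a first vertex $v=s|j$ with $T_v>\varepsilon$, i.e.\ with $T_v\in B=(\varepsilon,\infty)$; the collection of these first vertices is the first passage set $\Pi_B^{(1)}$, which is a.s.\ finite (bounded by twice—or rather $K$-times—the total surviving population, which has finite expectation $\sum_n \mathbb{E}[W_n]\le (1-q\nu)^{-1}$) and therefore is a cutset. For higher passage sets, one uses that the subtree of $\mathcal{T}$ rooted at any $u$ again has offspring-mean bound $\nu$ (this is part of the hypothesis, which is stated for all $v\in\mathcal{T}\backslash\{\theta\}$ conditionally on the ancestry), so the same argument applied to each rooted subtree shows each $\Pi_B^{(k)}$ is a.s.\ a finite cutset, giving cutset-recurrence of $B$ with respect to $(\{T_v\}_{v\in\mathbb{V}},\mathcal{T})$.

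The one point requiring care—and the main obstacle—is the conditioning structure: the random tree $\mathcal{T}$ is itself built from the branching Markov chain $\{X_v\}$ and is not a classical Galton–Watson tree, so I must be careful that the clocks $T_v$ are genuinely independent of $\mathcal{T}$ (after the relabeling noted before \autoref{24201}) and that the bound on $\mathbb{E}[K_v\mid\{K_u\}_{|u|\le|v|-1}]$ can legitimately be combined with the independent $q$-thinning when taking nested conditional expectations. Organizing the filtration correctly—first reveal $\mathcal{T}$, then reveal clocks generation by generation—makes the Wald-type iteration rigorous; everything else is the routine geometric estimate above.
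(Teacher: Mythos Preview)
Your proposal is correct and follows essentially the same route as the paper: a first-moment bound $\mathbb{E}[W_n]\le(q\nu)^n$ with $q=1-e^{-\varepsilon}<1/\nu$, then Fatou's lemma to force $W_n=0$ eventually, and finally the same cutset-recurrence argument as in the proof of \autoref{cutest-red-thm}. One small slip: when you write $q\,\mathbb{E}[K_v\mid\{K_u\}_{|u|\le n}]\le q\nu$ for $v$ at generation $n$, the conditioning should be on $\{K_u\}_{|u|\le n-1}$ (otherwise $K_v$ is in the conditioning $\sigma$-field); the paper has the same indexing and both versions are fixed by the tower-property step you describe.
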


\begin{proof}
The probability for a vertex $v\in\mathcal{T}$ to pass the inspection is
\[\delta:=\mathbb{P}({T}_v\le\varepsilon)=\int_0^\varepsilon e^{-t}dt=1-e^{-\varepsilon}<\frac{1}{\nu}.\]
The number of offspring of the root $\theta$ that pass the inspection is 
\[{{\tilde{K}}_{\theta}}=\sum\limits_{j=1}^{{{K}_{\theta }}}{{\mathbbm{1}_{{{T}_{ j}}\le \varepsilon }}}\]
which is the sum of $K_\theta$ i.i.d.\ random variables with distribution Bernoulli($\delta$) that are also independent of $K_\theta$. 
Thus
\[\mathbb{E}\tilde{K}_\theta=
\mathbb{E}[\delta{K}_\theta]\le\delta\nu<1.\]
For each $n\ge 0$, denote by $V_n$ be the total number of vertices of the $n$'th generation that pass the inspection. We have $V_0=1$ and $V_1=\tilde{K}_\theta$. If we label the vertices of the $n$'th generation that pass the inspection by $v_{n,j}$ for $1\le j\le V_n$ then
\[{{V}_{n+1}}=\sum\limits_{j=1}^{{{V}_{n}}}{{{{\tilde{K}}}_{{{v }_{n,j}}}}}\ \ \ \forall\, n\ge 0.\]
Write $u=v_{n,j}$. Since $\{T_v\}_{v\in\mathbb{V}}$ are independent from $\{K_v\}_{v\in\mathbb{V}}$, we have
\[\mathbb{E}\left[ {{{\tilde{K}}}_{u }}|{{V}_{n}} \right] = \mathbb{E}\left[ \sum\limits_{j=1}^{{{K}_{u}}}{{\mathbbm{1}_{{{T}_{u * j}}\le \varepsilon }}|\,{{V}_{n}}} \right] = \delta \mathbb{E}\left[ {{K}_{u }}|{{V}_{n}} \right] \]
Moreover, 
\begin{eqnarray*}
\mathbb{E}\left[ {{K}_{u }}|{{V}_{n}} \right]
&=&\mathbb{E}\left[ \mathbb{E}[{{K}_{u}}|{{\{{{K}_{v}}\}}_{|v|\le n}}]|{{V}_{n}} \right]\\
&\le& \mathbb{E}\left[ \nu |{{V}_{n}} \right]=\nu .
\end{eqnarray*}
Thus
\[\mathbb{E}{{V}_{n+1}}=  \mathbb{E}\left[\mathbb{E}\left[ {{{\tilde{K}}}_{w }}|{{V}_{n}} \right] V_n\right]
\le 
\delta \nu \mathbb{E}{{V}_{n}}.\]
Because $\delta\nu<1$, $\lim_{n\to\infty}\mathbb{E}V_{n}=0$. Put $V_\infty=\liminf V_{n}$. By Fatou's lemma, $\mathbb{E}V_\infty=0$. Therefore, $V_\infty=0$ almost surely 
and hence, 
the inspection process for $\{T_v\}_{v\in\mathbb{V}}$ almost surely stops in finitely many steps.

To show that the interval $B=(\varepsilon,\infty)$ is cut-set recurrent  with respect to $(\{T_v\}_{v\in\mathbb{V}},\mathcal{T})$,
construct the sequence of passage cut-sets $\tilde{\Pi}_B^{(k)}$ by the procedure described in the proof of \autoref{cutest-red-thm}: The first passage cut-set $\tilde{\Pi}_B^{(1)}$ consists of, for each $s\in\partial\mathcal{T}$, the first vertex $v\neq\theta$ such that $T_{v}>\varepsilon$. Starting at each vertex in $\tilde{\Pi}_B^{(k)}$, we start a new independent inspection process. The union of the vertices where one of these inspection processes stops gives the passage cut-set $\tilde{\Pi}_B^{(k+1)}$.  Since each of these sets is finite, $B$ is cut-set recurrent as claimed.
\end{proof}

Define a strictly increasing sequence of random variables $\tilde{H}_k$ as: $\tilde{H}_0=1$ and
\[\tilde{H}_{k}=\max\{|v|:~v\in\tilde{\Pi}_B^{(k)}\}~~~\forall\,k\in\mathbb{N}.\]
It then follows that almost surely, 
for each $k\in\mathbb{N}$ and $s\in\partial\mathcal{T}$, there exists $\tilde{l}_k(s)\in[k,\, \tilde{H}_k)$ such that $s|{\tilde{l}_k(s)}\in\tilde{\Pi}_B^{(k)}$. 
Then, for $n>\tilde{H}_k$
$$
\tilde{\zeta}_n = \underset{\begin{smallmatrix} 
 v \in \mathcal{T} \\ 
 |v |=n 
\end{smallmatrix}}{\mathop{\min }}
\sum\limits_{k=1}^{n}{{{{{T}}}_{v |k}}}
\ge \underset{\begin{smallmatrix} 
 v \in \mathcal{T} \\ 
 |v |=n  
\end{smallmatrix}}{\mathop{\min }}\,\sum\limits_{i=1}^{k}{{{{{T}}}_{{\tilde{l}_{i}}}}}\ge k\varepsilon
$$
where 
$\tilde{l}_i=\tilde{l}_i(s_v)$ with $s_v\in\partial\mathcal{T}$ such that $s_v|_{|v|}=v$.
Letting $k\to\infty$, we get $\tilde{\zeta}=\lim{\tilde{\zeta}}_n=\infty$ which implies \eqref{eq1:11420}.  This completes the proof of \autoref{main}.


\section{Criteria for cut-set recurrence}\label{corproof}

In this section, we give two criteria for a branching Markov chain to satisfy the condition (E) in terms of the transition probability density $p(x,y)$. These criteria will be used to show the non-explosion of the cascade of the KPP equation on the Fourier side (\autoref{kppcascade}) and of the Bessel cascade of the three-dimensional Navier-Stokes equations (\autoref{besselcascade}).
\begin{prop}\label{criterion1}
Let $\{X_v\}_{v\in\mathbb{T}}$ be a branching Markov chain on the state space $S$ satisfying \textup{(A)} and \textup{(B)} with the transition probability density $p(x,y)$. Suppose there exist a constant $b>0$ and functions $\phi_1,\phi_2:S\to(0,\infty)$ such that:
\begin{enumerate}[(i)]
\item $p(x,y)\le\phi_1(x)\phi_2(y)$ for all $x,y\in S_b=\lambda^{-1}((b,\infty))$.
\item $\phi_1=\phi\circ\lambda$ for some locally bounded function $\phi:(0,\infty)\to(0,\infty)$.
\item $\phi_2$, $\phi_1\phi_2\in L^1(S_b)$.
\end{enumerate}
Then $\{X_v\}_{v\in\mathbb{T}}$ satisfies the condition \textup{(E)}.
\end{prop}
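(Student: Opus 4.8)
The goal is to bound $I_n(a,A)$ with $A = \lambda^{-1}((0,c])$ for a suitable $c$ (naturally $c=b$, so that the complement $S\setminus A = S_b = \lambda^{-1}((b,\infty))$), and to show the bound has the form $\psi(a)r^{-n}$ with $r>2$ and $\psi = \phi\circ\lambda$. By Remark \ref{X>0}'s analogue (or directly from \eqref{I_n}), $I_n(a,b) = \mathbb{P}_a(X_{s|1}\in S_b,\ldots,X_{s|n}\in S_b)$, which by the Markov property (A), (B) can be written as the iterated integral
\[
I_n(a,b) = \int_{S_b}\cdots\int_{S_b} p(a,x_1)p(x_1,x_2)\cdots p(x_{n-1},x_n)\,dx_1\cdots dx_n.
\]
First I would insert the hypothesis (i): on $S_b$ each factor $p(x_{j-1},x_j)\le \phi_1(x_{j-1})\phi_2(x_j)$. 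The first factor $p(a,x_1)$ is special because $a$ need not lie in $S_b$; here I use $p(a,x_1)\le\phi_1(a)\phi_2(x_1)$ — this requires hypothesis (i) to hold with $x=a$, i.e. one should read (i) as holding for $x\in S$ (or $a\in S_b$; in the application $a\in S_b$ after the first step, but cleanly one wants it for the stated $a$). Substituting gives
\[
I_n(a,b)\ \le\ \phi_1(a)\int_{S_b}\cdots\int_{S_b}\phi_2(x_1)\phi_1(x_1)\phi_2(x_2)\phi_1(x_2)\cdots\phi_2(x_{n-1})\phi_1(x_{n-1})\phi_2(x_n)\,dx_1\cdots dx_n.
\]

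Next I would recognize that the integral factorizes: each of $x_1,\ldots,x_{n-1}$ contributes a factor $\int_{S_b}\phi_1\phi_2$, and $x_n$ contributes $\int_{S_b}\phi_2$. Writing $C_1 = \int_{S_b}\phi_1\phi_2\,d\mu$ and $C_2 = \int_{S_b}\phi_2\,d\mu$, both finite by hypothesis (iii), we get $I_n(a,b)\le \phi_1(a)\,C_2\,C_1^{n-1}$. This is already of the desired product form; setting $\psi(a) = \phi_1(a)C_2/C_1 = (\phi(\lambda(a)))C_2/C_1$ and $r = 1/C_1$ yields $I_n(a,b)\le \psi(a) r^{-n}$, and since $\phi$ is locally bounded, so is $\psi = \tilde\phi\circ\lambda$ with $\tilde\phi = \phi\cdot C_2/C_1$ locally bounded. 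This verifies condition (E) — except that we need $r>2$, i.e. $C_1 = \int_{S_b}\phi_1\phi_2 < 1/2$.

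The gap is that hypothesis (iii) only gives $C_1<\infty$, not $C_1<1/2$. The resolution — and the step I expect to be the real content — is that $b$ is a free parameter: one should choose $b$ large enough that $\int_{S_b}\phi_1\phi_2\,d\mu < 1/2$. Since $S_b = \lambda^{-1}((b,\infty))$ shrinks as $b\to\infty$ and $\phi_1\phi_2\in L^1$ (on some fixed reference $S_{b_0}$), dominated convergence gives $\int_{S_b}\phi_1\phi_2\to 0$ as $b\to\infty$, so such a $b$ exists. I would state this explicitly: replace $b$ by a larger $b'$ if necessary so that $C_1<1/2$, note that hypotheses (i)--(iii) are inherited by $S_{b'}\subset S_{b}$ (monotone in the domain), and then run the argument above with this $b'$, taking $c = b'$ in condition (E). The only subtlety to handle carefully is the first factor $p(a,x_1)$ and making sure the locally-bounded structure $\psi=\phi\circ\lambda$ is preserved through the constant multiple — both are routine. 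Thus the main obstacle is purely the quantitative "$r>2$" requirement, met by enlarging $b$; everything else is a direct Markov-property computation and Fubini.
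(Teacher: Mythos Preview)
Your approach is correct and essentially identical to the paper's: both write $I_n$ as an iterated integral over the complement of $A$, apply the factorized bound $p(x,y)\le\phi_1(x)\phi_2(y)$, separate variables to get $I_n\le\phi_1(a)\bigl(\int\phi_1\phi_2\bigr)^{n-1}\int\phi_2$, and then shrink the domain of integration to force $\int\phi_1\phi_2<1/2$. The only cosmetic difference is that the paper keeps $b$ fixed and takes $A=\lambda^{-1}((0,c])$ with $c>b$ large (so that integrals are over $S_c\subset S_b$), whereas you enlarge $b$ itself; these are the same move. Your caveat about needing $p(a,x_1)\le\phi_1(a)\phi_2(x_1)$ for arbitrary $a\in S$ (not just $a\in S_b$) is a fair reading of the hypothesis that the paper simply glosses over.
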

\begin{proof}
By \autoref{main}, it is sufficient to show that there exist constants $c>b$ and $r>2$ such that ${I}_n\lesssim\phi_1(a)r^{-n}$, where ${I}_n$ is defined by 
\[I_n(a,c)=\mathbb{P}_a(X_{s|1},\ldots X_{s|n}\in S_c)\ \ \ \forall a\in S,\,n\in \mathbb{N}\]
with $S_c=\lambda^{-1}((c,\infty))$.
Fix an initial state $X_\theta=a>0$ and a path $s\in \partial\mathbb{T}$. 
For simplicity, we write the sequence $X_{\theta}$, $X_{s|1}$, $X_{s|2}$,\ldots as $X_0$, $X_1$, $X_2$,\ldots Since this is a time homogeneous Markov chain with transition probability density $p$,
\[{I}_n={\int_{S_c}{\ldots \int_{S_c}p(a,y_1)p(y_1,y_2)\ldots p(y_{n-1},y_n)d{{y}_{n}}\ldots d{{y}_{1}}}}~~~\forall\,n\ge 1.\]
By the condition (i), $p(y_{j-1},y_j)\le\phi_1(y_{j-1})\phi_2(y_j)$. We obtain the estimate
\[{{I}_{n}}\le {{\phi }_{1}}(a){{\left( \int_{S_c}{{{\phi }_{1}}{{\phi }_{2}}dx} \right)}^{n-1}}\int_{S_c}{{{\phi }_{2}}dx}\le {{\phi }_{1}}(a)\frac{{{\left\| {{\phi }_{2}} \right\|}_{{{L}^{1}}(S_b)}}}{{{\left\| {{\phi }_{1}}{{\phi }_{2}} \right\|}_{{{L}^{1}}(S_b)}}}{{r}^{-n}}\]
where $1/r=\int_{S_c}{{{\phi }_{1}}{{\phi }_{2}}dx}$. By choosing $c>b$ sufficiently large, we get $r>2$.
\end{proof}
\begin{prop}\label{criterion}
Let
$\{X_v\}_{v\in\mathbb{T}}$ be a branching Markov chain on the state space $S=(0,\infty)$ satisfying \textup{(A), (B)} and with following conditions:
\begin{enumerate}[(i)]
\item For any path $s\in\partial\mathbb{T}$, the Markov chain $X_\theta$, $X_{s|1}$, $X_{s|2}$,\ldots is time-reversible with the transition probability density $p(x,y)$ and the invariant probability density $\gamma(x)$. 
\item There exist $c_1>1$ and a locally bounded function $\psi_1:(0,\infty)\to\mathbb{R}$ such that
\[p(x,y)\le \psi_1(x)\gamma(y)~~~~\forall\, x,y>c_1.\]
\item There exists $c_2>1$ such that
\[p(x,y)\le \frac{c_2}{y}\ \ \ \forall\, c_1<y<x.\]
\item There exists a function $\psi_2:(0,\infty)\to\mathbb{R}$ such that 
\[p(x,y)\le \frac{\psi_2(y-x)}{x}\ \ \ \forall\, c_1<x<y.\]
\item There exists $\alpha>2c_2$ such that
\[\int_0^\infty \psi_2(x)x^\alpha dx<\infty,~~~\int_0^\infty \gamma(x)x^\alpha dx<\infty.\]
\end{enumerate}
Then $\{X_v\}_{v\in\mathbb{T}}$ satisfies the condition \textup{(E)} with $A=(0,c]$, $2<r<\alpha/c_2$, $\psi=2\|\gamma x^\alpha\|_{L^1}\psi_1$ and
\[c=\max\left\{c_1,\,\frac{2^{\alpha-1} r(\|\psi_2\|_{L^1}+\|\psi_2 x^\alpha\|_{L^1})}{1-(c_2r)/\alpha}\right\}.\]
\end{prop}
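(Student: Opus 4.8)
The plan is to estimate $I_n(a,c)$ directly from its integral representation and show it decays like $\phi_1(a)r^{-n}$ for suitable $c$ and $r>2$, thereby verifying condition (E) via the criterion implicit in \autoref{main}. Fix an initial state $X_\theta=a$ and a path $s\in\partial\mathbb{T}$; write the Markov chain along $s$ as $X_0=a,X_1,X_2,\dots$. By time-homogeneity,
\[
I_n(a,c)=\int_{(c,\infty)^n} p(a,y_1)p(y_1,y_2)\cdots p(y_{n-1},y_n)\,dy_n\cdots dy_1,
\]
and we take $c\ge c_1$ so that all the $y_j$ lie in the region where hypotheses (ii)--(iv) apply. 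The key observation is that time-reversibility lets us reorganize the chain: the quantity $I_n$ is unchanged if we condition on the maximum of the excursion. More concretely, I would introduce the weighted variable $W_j = X_j^{\alpha}$ (or work with $x^{\alpha}$-weights directly) and run a supermartingale-type estimate along the path, using that the one-step transition, weighted by $y^{\alpha}$, contracts: from (iii), for a downward step $c_1<y<x$ one has $\int_{c_1}^{x} p(x,y)\,y^{\alpha}\,dy \le c_2\int_0^x y^{\alpha-1}\,dy = (c_2/\alpha)\,x^{\alpha}$; from (iv), for an upward step $x<y$ one has $\int_x^\infty p(x,y)\,y^\alpha\,dy \le x^{-1}\int_x^\infty \psi_2(y-x)\,y^\alpha\,dy \le x^{\alpha-1}\int_0^\infty \psi_2(t)(1+t/x)^\alpha\,dt$, which after bounding $(1+t/x)^\alpha \le 2^{\alpha-1}(1+(t/x)^\alpha)$ and using $x>c_1>1$ is at most $2^{\alpha-1}x^{\alpha-1}(\|\psi_2\|_{L^1}+\|\psi_2 x^\alpha\|_{L^1})$.

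The strategy is then: handle the very first step and the very last step separately, and bound the bulk of the product by iterating the weighted one-step contraction. For the first step, apply (ii): $p(a,y_1)\le \psi_1(a)\gamma(y_1)$, so $p(a,y_1)$ is dominated by $\psi_1(a)$ times a bounded-integral factor $\gamma(y_1)$. For the intermediate steps $y_1\to y_2\to\cdots\to y_{n-1}$, insert the weight $y_j^{\alpha}$ and divide it back out as $y_{j-1}^{-\alpha}\cdot y_{j-1}^{\alpha}$, telescoping so that each step picks up at most the factor
\[
\rho := \max\Big\{\tfrac{c_2}{\alpha},\ 2^{\alpha-1}(\|\psi_2\|_{L^1}+\|\psi_2 x^\alpha\|_{L^1})\cdot\tfrac{1}{c}\Big\},
\]
provided one is careful that each transition is either "up" or "down" relative to the threshold. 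Choosing $c$ as large as in the statement forces the second term in the max below $1/r$, and choosing $2<r<\alpha/c_2$ forces $c_2/\alpha<1/r$ too, so every intermediate step contributes a factor $\le 1/r$. The last step absorbs the residual $y_{n-1}^{-\alpha}$ weight against $\int \gamma(y_n)\,dy_n$ or $\int \psi_2\,dy_n$ using hypothesis (v), which is exactly why both $\|\gamma x^\alpha\|_{L^1}<\infty$ and $\|\psi_2 x^\alpha\|_{L^1}<\infty$ are assumed. Collecting constants gives $I_n(a,c)\le 2\|\gamma x^\alpha\|_{L^1}\,\psi_1(a)\,r^{-n}$, i.e.\ condition (E) holds with $\psi=2\|\gamma x^\alpha\|_{L^1}\psi_1$ and the stated $A$, $r$, $c$; local boundedness of $\psi$ follows from that of $\psi_1$.

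The main obstacle I anticipate is the bookkeeping around the "up vs.\ down" dichotomy: at a generic intermediate vertex the next state $y_{j+1}$ can be either above or below $y_j$, and hypotheses (iii) and (iv) give different bounds in the two cases, so the clean telescoping above requires either (a) summing over the $2^{n}$ sign patterns and checking that in every pattern the weighted factors still compound to $\rho^{n}$ (which works because the weight $y^{\alpha}$ is monotone, so a "down" step can only decrease the weight and an "up" step is controlled by the $\psi_2$ convolution bound uniformly), or (b) phrasing it as a genuine supermartingale statement: $\mathbb{E}[X_{j+1}^\alpha \mathbbm{1}_{X_{j+1}>c}\mid X_j=x]\le \rho\, x^\alpha$ for all $x>c$, then iterating expectations. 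Option (b) is cleaner and is the route I would take. A secondary subtlety is that the bound in (iv) involves $\psi_2(y-x)$ with no sign constraint on the argument's growth, so the inequality $(1+t/x)^\alpha\le 2^{\alpha-1}(1+(t/x)^\alpha)$ (valid for $\alpha\ge 1$; for $0<\alpha<1$ it is even easier) together with $x>1$ must be invoked carefully to land the constant $2^{\alpha-1}$ that appears in the formula for $c$; this is routine but is where the precise numerology of the stated $c$ comes from. Time-reversibility itself (hypothesis (i)) is used only lightly here — mainly to ensure the chain is well-defined with the stated invariant density and to legitimize treating $\gamma$ as a genuine stationary measure in the first/last-step estimates — so I do not expect it to be the crux.
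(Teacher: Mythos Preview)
Your one-step contraction --- splitting $\int_c^\infty p(x,y)\,y^\alpha\,dy$ into downward and upward pieces via (iii)--(iv) and bounding each by a multiple of $x^\alpha$ --- is exactly the engine of the paper's argument. One slip: the two pieces \emph{add}, so the correct factor is $\rho=c_2/\alpha+2^{\alpha-1}(\|\psi_2\|_{L^1}+\|\psi_2 x^\alpha\|_{L^1})/c$, not the max; with the stated $c$ this sum equals $1/r$ exactly, so the supermartingale inequality $\mathbb{E}[X_{j+1}^\alpha\,\mathbbm{1}_{X_{j+1}>c}\mid X_j=x]\le r^{-1}x^\alpha$ holds on $\{x>c\}$ and iterates to $g_n(x):=\mathbb{P}_x(X_1,\dots,X_{n-1}>c)\le x^\alpha r^{-n}$ (this is your option (b), and it is also the paper's induction). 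Your ``last step'' paragraph is off: there is no residual $y_{n-1}^{-\alpha}$ to absorb. Once $g_n$ is controlled, the assembly is simply
\[
I_n(a,c)=\int_c^\infty p(a,y_1)\,g_n(y_1)\,dy_1\le\psi_1(a)\int_c^\infty\gamma(y_1)\,y_1^\alpha\,r^{-n}\,dy_1\le\|\gamma x^\alpha\|_{L^1}\,\psi_1(a)\,r^{-n},
\]
and $\|\psi_2 x^\alpha\|_{L^1}$ has already been spent inside the one-step bound, not at the end.

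The paper arranges the same ingredients differently. It introduces a \emph{forward} density $f_n$ (so that $I_n=\int_c^\infty f_n$) alongside the backward quantity $g_n$, and uses detailed balance $p(y,x)\gamma(y)=p(x,y)\gamma(x)$ to prove by induction that $f_n(x)\le\psi_1(a)\gamma(x)g_n(x)$; this is where hypothesis (i) is genuinely invoked. It then bounds $g_n\le\min\{x^\alpha r^{-n},1\}$ and splits the integral $\int_c^\infty\gamma g_n$ at the threshold $r^{n/\alpha}$, using each branch of the $\min$ on its own side --- this split is what produces the factor $2$ in $\psi$. Your direct route (apply (ii) once at $y_1$ and iterate forward) reaches the same endpoint without the $f_n$--$g_n$ conversion and without the split, so your instinct that reversibility is not the crux is in fact correct: the paper leans on it, but it can be bypassed.
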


\begin{proof}
As noted in \autoref{X>0}, it is sufficient to show ${I}_n\le\psi(a)r^{-n}$, where ${I}_n$ is defined by \eqref{I_n'}.
Fix an initial state $X_\theta=a>0$ and a path $s\in \partial\mathbb{T}$. 
Again, we write the sequence $X_{\theta}$, $X_{s|1}$, $X_{s|2}$,\ldots as $X_0$, $X_1$, $X_2$,\ldots Since this is a time homogeneous Markov chain with transition probability density $p$,
\[{I}_n={\int_{c}^{\infty }{\ldots \int_{c}^{\infty }p(a,y_1)p(y_1,y_2)\ldots p(y_{n-1},y_n)d{{y}_{n}}\ldots d{{y}_{1}}}}~~~\forall\,n\ge 1.\]
Consider a sequence of functions $\{f_n\}$ given by $f_1(y)=p(a,y)$ and 
\[{{f}_{n+1}}(x)=\int_{c}^{\infty }{p(y,x){{f}_{n}}(y)dy}\ \ \ \forall n\ge 1.\]
Then ${I}_n=\int_{c}^\infty f_n(y)dy$. Define a sequence of functions $\{g_n\}$ by $g_1(x)\equiv 1$ and
\begin{equation}\label{eq:918202}
g_{n+1}(x)=\int_{c}^\infty p(x,y)g_n(y)dy~~~\forall n\ge 1.
\end{equation}
We now show by induction that $f_n(x)\le \psi_1(a)\gamma(x)g_n(x)$. This is true for $n=1$ thanks to condition (ii). Suppose $f_n\le \psi_1(a)\gamma g_n$ for some $n\ge 1$. By the induction hypothesis and condition (i),
\begin{eqnarray*}
{{f}_{n+1}}(x)&=&\int_{c}^{\infty }{p(y,x){{f}_{n}}(y)dy}\le \psi(a)\int_{c}^{\infty }{p(y,x)\gamma (y){{g}_{n}}(y)dy}\\
&=&\psi_1(a)\int_{c}^{\infty }{p(x,y)\gamma (x){{g}_{n}}(y)dy}=\psi(a)\gamma (x){{g}_{n+1}}(x)
\end{eqnarray*}
which completes the proof by induction. 
We show by induction that
\begin{equation}\label{eq:918201}
g_n(x)\le\min\{x^\alpha r^{-n},\,1\}~~~\forall n\in\mathbb{N},\, x>c.
\end{equation}
Using the fact that $\int_0^\infty p(x,y)dy=1$, we can deduce from \eqref{eq:918202} that $g_n(x)\le 1$ for all $n\in\mathbb{N}$ and $x>0$. Because $c>1$, \eqref{eq:918201} is true for $n=1$. Suppose by induction that for some $n\ge 1$, \[g_n(x)\le x^\alpha r^{-n}~~~\forall x>c.\]
We decompose $g_{n+1}$ given by \eqref{eq:918202} as follows.
\[{{g}_{n+1}}(x)=\underbrace{\int_{c}^{x}{p}(x,y){{g}_{n}}(y)dy}_{\{1\}}+\underbrace{\int_{x}^{\infty }{p}(x,y){{g}_{n}}(y)dy}_{\{2\}}.\]
The first term can be estimated using condition (c):
\[\{1\}\le \int_{c}^{x}{\frac{{{c}_{2}}}{y}({{y}^{\alpha }}{{r}^{-n}})dy}={{c}_{2}}{{r}^{-n}}\int_{c}^{x}{{{y}^{\alpha -1}}dy}<{{x}^{\alpha }}{{r}^{-n-1}}\kappa .\]
The second term can be estimated using condition (d):
\[\{2\}\le \int_{x}^{\infty }{\frac{\psi_2(y-x)}{x}({{y}^{\alpha }}{{r}^{-n}})dy}=\frac{{{r}^{-n}}}{x}\int_{0}^{\infty }{\psi_2(z){{(x+z)}^{\alpha }}dz}.\]
By the inequality $(x+z)^\alpha\le 2^{\alpha-1}(x^\alpha+z^\alpha)$, we have
\begin{eqnarray*}
\{2\}&<&\frac{{{r}^{-n}}{{2}^{\alpha -1}}}{c}\int_{0}^{\infty }{\psi_2(z)({{x}^{\alpha }}+{{z}^{\alpha }})dz}\\
&<&{{x}^{\alpha }}{{r}^{-n-1}}\left( \frac{{{2}^{\alpha -1}}r{{\left\| \psi_2 \right\|}_{{{L}^{1}}}}+{{2}^{\alpha -1}}r{{\left\| \psi_2{{z}^{\alpha }} \right\|}_{{{L}^{1}}}}}{c} \right)\\
&<&{{x}^{\alpha }}{{r}^{-n-1}}(1-\kappa )
\end{eqnarray*}
where $\kappa=\frac{c_2r}{\alpha}$. By the above estimates, $g_{n+1}(x)=\{1\}+\{2\}<x^\alpha r^{-n-1}$. Therefore, \eqref{eq:918201} is true for every $n$. We proceed to estimate ${I}_n$. Put $\beta_n=r^{n/\alpha}$. If $\beta_n>c$ then
\begin{eqnarray*}
\frac{{{{I}}_{n}}}{\psi_1 (a)}=\frac{1}{\psi_1 (a)}\int_{c}^{\infty }{{{f}_{n}}(x)dx}&\le& \int_{c}^{\infty }{\gamma (x){{g}_{n}}(x)dx}\\
&\le& \underbrace{\int_{c}^{{{\beta }_{n}}}{\gamma (x){{x}^{\alpha}}{{r}^{-n}}dx}}_{\{3\}}+\underbrace{\int_{{{\beta }_{n}}}^{\infty }{\gamma (x)dx}}_{\{4\}}.
\end{eqnarray*}
Each term can be estimated as follows:
\begin{eqnarray*}
\{3\}&\le& {{r}^{-n}}\int_{0}^{\infty }{\gamma (x){{x}^{\alpha }}dx}={{r}^{-n}}{{\left\| \gamma {{x}^{\alpha }} \right\|}_{{{L}^{1}}}},\\
\{4\}&\le& \beta _{n}^{-\alpha }\int_{{{\beta }_{n}}}^{\infty }{\gamma (x){{x}^{\alpha }}dx}\le{{r}^{-n}}{{\left\| \gamma {{x}^{\alpha }} \right\|}_{{{L}^{1}}}}.
\end{eqnarray*}
Therefore, ${I}_n\le 2\|\gamma x^\alpha\|_{L^1}\psi_1(a)r^{-n}$. If $\beta_n\le c$ then
\[\frac{{I}_n}{\psi_1(a)}\le \{4\}\le r^{-n}\|\gamma x^\alpha\|_{L_1}.\]
In this case, we have ${I}_n\le \|\gamma x^\alpha\|_{L^1}\psi_1(a)r^{-n}$.
\end{proof}
\section{Self-similar DSY cascades and proof of \autoref{multibran}}
\label{selfsimilarsection}

We begin this section noting some properties of self-similar DSY cascades.
\begin{lem}\label{selfsimilar}
Let $\{\lambda(X_v)^{-1}T_v\}_{v\in\mathbb{T}}$ be a self-similar DSY cascade on $S\subset(0,\infty)$. Then for any initial state $X_\theta=a\in S$ and path $s\in\partial\mathbb{T}$, the ratios $\{R_{s|j}=X_{s|j}/X_{s|j-1}\}_{j\ge 1}$ form an i.i.d.\ sequence with the common distribution $p(1,dx)$.
\end{lem}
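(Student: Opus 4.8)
The plan is to reduce the lemma to a single scaling identity for the one-step transition kernel $p$, and then obtain the i.i.d.\ statement from a routine Markov-chain computation. Throughout I fix the initial state $X_\theta=a\in S$, work under $\mathbb{P}_a$, fix a path $s\in\partial\mathbb{T}$, and write $X_j=X_{s|j}$ and $R_j=R_{s|j}=X_j/X_{j-1}$ (so $X_0=a$ and $R_1=X_1/a$).

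\emph{Step 1 (scaling of the kernel).} By property (A), under $\mathbb{P}_a$ the variable $X_1$ has law $p(a,\cdot)$. On the other hand, applying the self-similarity property with $v=\theta$, the quotient family $\{X_w/X_\theta\}_{w\in\mathbb{T}}=\{X_w/a\}_{w\in\mathbb{T}}$ again satisfies (A), (B) with transition kernel $p$, and its root value is $X_\theta/X_\theta=1$; hence its value at $s|1$, namely $X_1/a$, has law $p(1,\cdot)$. Comparing the two descriptions of the law of $X_1/a$ gives
\[
p(a,dy)=(m_a)_*\,p(1,dy),\qquad m_a(x)=ax,
\]
and, since the initial state $a$ was arbitrary, this holds for every $a\in S$. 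Equivalently,
\[
\int f(y/x)\,p(x,dy)=\int f(z)\,p(1,dz)\qquad\text{for all }x\in S\text{ and bounded measurable }f.
\]

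\emph{Step 2 (i.i.d.\ ratios).} I would then show by induction on $j$ that $R_1,\dots,R_j$ are independent with common law $p(1,\cdot)$. Let $\mathcal{F}_j=\sigma(X_0,\dots,X_j)$; since $X_0=a$ is fixed and $S$ is a group, $\mathcal{F}_j=\sigma(R_1,\dots,R_j)$. The case $j=1$ is Step 1 applied at $x=a$. For the inductive step, the Markov property from (A) together with the scaling identity yields, for bounded measurable $f$,
\[
\mathbb{E}_a\big[f(R_j)\mid\mathcal{F}_{j-1}\big]=\int f(y/X_{j-1})\,p(X_{j-1},dy)=\int f(z)\,p(1,dz),
\]
which is deterministic; hence $R_j$ is independent of $\mathcal{F}_{j-1}\supseteq\sigma(R_1,\dots,R_{j-1})$ and has law $p(1,\cdot)$. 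Combined with the inductive hypothesis this gives independence of $R_1,\dots,R_j$, and letting $j\to\infty$ completes the proof.

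The only real content is Step 1, and the point to be careful about there is purely one of bookkeeping: the self-similarity hypothesis is a statement about the \emph{unconditional} law of the quotient family for each fixed initial state, so the kernel identity $p(a,\cdot)=(m_a)_*p(1,\cdot)$ must be read off one initial state at a time and then collected over all $a\in S$. Once that identity is available, Step 2 is the standard fact that a Markov chain on a multiplicative group whose kernel scales multiplicatively is a multiplicative random walk, and it presents no obstacle.
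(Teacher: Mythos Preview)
Your proof is correct and follows essentially the same strategy as the paper: first establish the scaling identity $p(a,\cdot)=(m_a)_*p(1,\cdot)$ from self-similarity, then deduce that the ratios along a path are i.i.d.\ with law $p(1,\cdot)$. The only difference is cosmetic: the paper verifies independence by computing the joint CDF of $(R_1,\dots,R_n)$ via an explicit change of variables, whereas you use the Markov property and conditional expectations to show $\mathbb{E}_a[f(R_j)\mid\mathcal{F}_{j-1}]$ is deterministic---a cleaner route to the same conclusion.
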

\begin{proof}
For $v\in\mathbb{T}\backslash\{\theta\}$, the ratio $R_v$ can be written as $R_v=X_v/X_{\overleftarrow{v}}$, where $\overleftarrow{v}$ denotes the parent of $v$. Now fix $v\in\mathbb{T}\backslash\{\theta\}$. By the definition of self-similar DSY cascades, the family $\{Y_w=X_{\overleftarrow{v}*w}/X_{\overleftarrow{v}}\}_{w\in\mathbb{T}}$ satisfies (A), (B) with the transition probability $p(x,dy)$. Since $Y_\theta=1$, $Y_1$ and $Y_2$ have the distribution $p(1,dx)$. Therefore, $R_v$ also has the distribution $p(1,dx)$. 

Next, we show that the transition probability has the scaling property $p(a,adx)=p(1,dx)$ for all $a\in S$. This is obtained by expressing the cumulative distribution functions of $R_1$ in two ways. First, $\mathbb{P}_a(R_1\le x)=\int_0^xp(1,dz)$ for any $a,x\in S$. Second, $\mathbb{P}_a(R_1\le x)=\mathbb{P}_a(X_1\le ax)=\int_0^{ax}p(a,dy)$. By the change of variables $z= y/a$ in the last integral, one obtains $p(a,adz)=p(1,dz)$.

Finally, we show that along each path $s\in\partial\mathbb{T}$, the ratios $\{R_{s|j}\}_{j\ge 1}$ are independent. For any $r_1,r_2,\ldots,r_n\in S$,
\begin{eqnarray*}
{{\mathbb{P}}_{a}}({{R}_{1}}\le {{r}_{1}},\ldots ,{{R}_{n}}\le {{r}_{n}})&=&{{\mathbb{P}}_{a}}({{X}_{1}}\le a{{r}_{1}},{{X}_{2}}\le {{X}_{1}}{{r}_{2}},\ldots ,{{X}_{n}}\le {{X}_{n-1}}{{r}_{n}})\\
&=&\int_{0}^{a{{r}_{1}}}{\int_{0}^{{{x}_{1}}{{r}_{2}}}{\ldots \int_{0}^{{{x}_{n-1}}{{r}_{n}}}{p(a,d{{x}_{1}})p({{x}_{1}},d{{x}_{2}})\ldots p({{x}_{n-1}},d{{x}_{n}})}}}
\end{eqnarray*}
where $R_j$ and $X_j$ are short notations for $R_{s|j}$ and $X_{s|j}$. By the change of variables $y_j=x_{j}/x_{j-1}$ (with $x_0=a$) and the scaling property of $p$, one has
\begin{eqnarray*}
{{\mathbb{P}}_{a}}({{R}_{1}}\le {{r}_{1}},\ldots ,{{R}_{n}}\le {{r}_{n}})&=&\int_{0}^{{{r}_{1}}}{\int_{0}^{{{r}_{2}}}{\ldots \int_{0}^{{{r}_{n}}}{p(1,d{{y}_{1}})p(1,d{{y}_{2}})\ldots p(1,d{{y}_{n}})}}}\\
&=&\mathbb{P}({{R}_{1}}\le {{r}_{1}})\ldots \mathbb{P}({{R}_{n}}\le {{r}_{n}}).
\end{eqnarray*}
\end{proof}
We now give the proof of \autoref{multibran}.
By \autoref{main} (taking into account \autoref{X>0}),
 we only need to find $c>0$, $r>2$, and a locally bounded function $\psi$ on $(0,\infty)$ such that
\[I_n(a,c)=\mathbb{P}_a(X_{s|1}>c,\,X_{s|2}>c,\ldots,X_{s|n}>c)\le \psi(a)r^{-n}\ \ \ \forall\, n\ge 1\]
where $s\in \partial\mathbb{T}$ is an arbitrary path. For simplicity, we write the sequences $\{X_{s|j}\}_{j\ge 0}$ and $\{R_{s|j}\}_{j\ge 1}$ as $\{X_j\}_{j\ge 0}$ and  $\{R_j\}_{j\ge 1}$, respectively. Recall that $p(1,dx)$ is the distribution of $R_1$. Define $I_0=1$. For $n\ge 1$, we have
\begin{eqnarray*}
{{I}_{n}}(a,c)&=&{{\mathbb{P}}_{a}}\left( a{{R}_{1}}>c,\ldots,a{{R}_{1}}{{R}_{2}}...{{R}_{n}}>c \right)\\
&=&{{\mathbb{P}}_{a}}\left( {{R}_{1}}>\frac{c}{a},a{{R}_{2}}>\frac{c}{{{R}_{1}}},\ldots,a{{R}_{2}}...{{R}_{n}}>\frac{c}{{{R}_{1}}} \right)\\
&=&\int_{c/a}^{\infty }{{{\mathbb{P}}_{a}}\left( a{{R}_{2}}>\frac{c}{x},\ldots,a{{R}_{2}}...{{R}_{n}}>\frac{c}{x} \right)p(1,dx)}\\
&=&\int_{c/a}^{\infty }{{{I}_{n-1}}(a,c/x)p(1,dx)}.
\end{eqnarray*}
Therefore,
\[{{I}_{n}}(a,t)=\int_{t/a}^{\infty }{{{I}_{n-1}}(a,t/x)p(1,dx)}\ \ \ \forall a,t\in S.\]
Next, we show by induction that
\begin{equation}\label{36211}
I_n(a,t)\le\left(\frac{a}{t}\right)^br^{-n}\ \ \ \forall a,t\in S,\,n\ge 1
\end{equation}
where $r=1/\mathbb{E}[R_1^b]>2$. For $n=1$,
\[{{I}_{1}}(a,t)=\int_{t/a}^{\infty }{p(1,dx)}\le {{\left( \frac{a}{t} \right)}^{b}}\int_{t/a}^{\infty }{{{x}^{b}}p(1,dx)}\le {{\left( \frac{a}{t} \right)}^{b}}\mathbb{E}[{R_1^{b}}]={{\left( \frac{a}{t} \right)}^{b}}{{r}^{-1}}.\]
Suppose \eqref{36211} is true for some $n\ge 1$. Then
\[{{I}_{n+1}}(a,t)\le \int_{t/a}^{\infty }{{{\left( \frac{ax}{t} \right)}^{b}}{{r}^{-n}}p(1,dx)}\le {{\left( \frac{a}{t} \right)}^{b}}{{r}^{-n}}\mathbb{E}[{R_1^{b}}]={{\left( \frac{a}{t} \right)}^{b}}{{r}^{-n-1}}.\]
Therefore, \eqref{36211} is also true for $n+1$. We can now choose $c=1$ and $\psi(a)=a^b$.

\section{Proof of \autoref{expcriterion}}
\label{explosioncriteria}
Starting from the root $\theta$, we construct a random path in $\partial\mathbb{T}$ by recursively annexing one vertex at a time as follows. Suppose $v\in\mathbb{T}$ is the most recently annexed vertex. If $\lambda(X_{v*1})\ge \lambda(X_{v*2})$ then $v*1$ is the next to be annexed. Otherwise, $v*2$ is the next to be annexed. This random path, which we denote by $s$, is a path with stepwise maximal intensities: at every branching step, the path follows the branch that has a larger intensity.
\begin{figure}[h]
\centering
\includegraphics[scale=1]{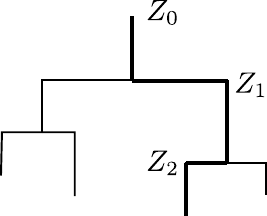}
\caption{Path with stepwise maximal intensities.}
\label{maximal}
\end{figure}
For $n\ge 0$, let $Z_n=\lambda(X_{s|n})$. Then
\[\zeta \le {{\zeta }_{*}}:=\sum\limits_{j=0}^{\infty }{\frac{{{T}_{s|j}}}{\lambda ({{X}_{s|j}})}}=\sum\limits_{j=0}^{\infty }{\frac{{{T}_{s|j}}}{{{Z}_{j}}}}.\]
By the independence of the clocks and the intensities,
\begin{equation}\label{316211}{{\mathbb{E}}_{a}}\zeta =\sum\limits_{j=0}^{\infty }{\mathbb{E}_a[ Z_{j}^{-1} ]}.\end{equation}
We only need to show that
\begin{equation}\label{316212}
\mathbb{E}_a[Z_j^{-1}]\le \kappa^j\lambda(a)^{-1}\ \ \ \forall\,j\in\mathbb{N},\,a\in S.
\end{equation}
Indeed, once \eqref{316212} is proved, one can infer from \eqref{316211} that
\[{{\mathbb{E}}_{a}}\zeta \le \sum\limits_{j=0}^{\infty }{\frac{{{\kappa}^{j}}}{\lambda (a)}}=\frac{1}{\lambda (a)}\frac{1}{1-\kappa}<\infty \]
which leads to $\zeta<\infty$ a.s. Next, to show \eqref{316212}, it suffices to show that
\begin{equation}\label{316213}
\mathbb{E}_a[Z_{j+1}^{-1}]\le \kappa\mathbb{E}_a[Z_j^{-1}]\ \ \ \forall\,j\ge 0,\,a\in S.
\end{equation}
Fix $j\ge 0$ and put $v=s|j\in\mathbb{T}$. Then $Z_j=\lambda(X_v)$ and $Z_{j+1}=\max\{\lambda(X_{v*1}),\lambda(X_{v*2})\}$. Because the joint distribution of $(X_{v*1},x_{v*2})$ given $X_v$ is the same as the joint distribution of $(X_1,X_2)$ given $X_\theta$,
\begin{eqnarray*}
\mathbb{E}_a[Z_{j+1}^{-1}|{{X}_{v}}=x]&=&\mathbb{E}_a\left[ \frac{1}{\max \left\{ \lambda ({{X}_{v*1}}),\lambda ({{X}_{v*2}}) \right\}}|{{X}_{v}}=x \right]\\
&=&\mathbb{E}\left[ \frac{1}{\max \left\{ \lambda ({{X}_{1}}),\lambda ({{X}_{2}}) \right\}}|{{X}_{\theta }}=x \right]\\
&=&\mathbb{E}_x[Z_{1}^{-1}]\le \kappa\lambda(x)^{-1}.
\end{eqnarray*}
Thus, $\mathbb{E}[Z_{j+1}^{-1}|X_v]\le \kappa\lambda(X_v)^{-1}=\kappa Z_j^{-1}$. The proof of \eqref{316213} is then completed by the law of total expectation,
\[\mathbb{E}_a[Z_{j+1}^{-1}]=\mathbb{E}_a[\mathbb{E}[Z_{j+1}^{-1}|{{X}_{v}}]]\le \kappa\mathbb{E}_a[Z_{j}^{-1}].\]

\section{Proof of \autoref{01}}
\label{zeroone}
With the initial state $X_\theta=x\in S$, the explosion time can be expressed as $\zeta =T_\theta\lambda(x)^{-1}+\min \left\{ {{\zeta }^{(1)}},{{\zeta }^{(2)}} \right\}$ where $\zeta^{(k)}$ is the explosion time of the sub-cascade starting at vertex $k\in\{1,2\}$.
\begin{eqnarray}
\nonumber f(x):={{\mathbb{P}}_{x}}(\zeta =\infty )&=&{{\mathbb{E}}_{x}}[\mathbb{E}[{\mathbbm{1}_{{{\zeta }^{(1)}}=\infty }}{\mathbbm{1}_{{{\zeta }^{(2)}}=\infty }}|{{X}_{1}},{{X}_{2}}]]\\
\nonumber &=&{{\mathbb{E}}_{x}}[\mathbb{E}[{\mathbbm{1}_{{{\zeta }^{(1)}}=\infty }}|{{X}_{1}}]\mathbb{E}[{\mathbbm{1}_{{{\zeta }^{(2)}}=\infty }}|{{X}_{2}}]]\\
\label{eq:611211}&=&{{\mathbb{E}}_{x}}[f(X_1)f(X_2)].
\end{eqnarray}
By \autoref{selfsimilar}, we can express $X_v$, $v\in\mathbb{T}$, as a product of i.i.d.\ ratios $X_v=xR_{v|1}\ldots R_{v||v|}$. Let us first assume that condition (i) is satisfied. We use the multiplicativity of $\lambda$ to rewrite the explosion time as $\zeta=\lambda(x)^{-1}\widetilde{\zeta}$ where
\[\widetilde{\zeta} =\underset{n}{\mathop{\sup }}\,\underset{|v|=n}{\mathop{\min }}\,\sum\limits_{j=0}^{n}{\frac{{{T}_{v|j}}}{\lambda ({{R}_{v|1}})...\lambda ({{R}_{v|n}})}}.\]
The event $[\zeta=\infty]$ is the same as the event $[\widetilde{\zeta}=\infty]$, whose probability does not depend on $x$. Thus, $f(x)\equiv c$. By \eqref{eq:611211}, $c=c^2$. Therefore, $c\in\{0,1\}$.

Next, assume that condition (iii) is satisfied. From \eqref{eq:611211}, one has the estimate $2f(x)\le \mathbb{E}_x[f(X_1)^2]+\mathbb{E}_x[f(X_2)^2]$. Now note that along each path $s\in\partial\mathbb{T}$, $\gamma$ is a stationary distribution of the Markov chain $\{X_{s|j}\}_{j\ge 0}$. By integrating the above inequality against $\gamma(dx)$, one obtains 
 \[2\mathbb{E}_\gamma f({{X}_{\theta }})\le \mathbb{E}_\gamma[f{{({{X}_{1}})}^{2}}]+\mathbb{E}_\gamma[f{{({{X}_{2}})}^{2}}]=2\mathbb{E}_\gamma[f{{({{X}_{\theta }})}^{2}}]\]
which implies $f(x)\in\{0,1\}$ for $\gamma$-a.e.\ $x\in S$. Consider two cases.\\
$\blacksquare$ $f(a)=1$ for some $a\in S$:\\
Using the inequality $f(X_1)f(X_2)\le f(X_1)$, one has $1=f(a)\le {{\mathbb{E}}_{a}}[f({{X}_{1}})]=\int_{S}{f(x)p(a,dx)}$. Thus, $f(x)=1$ for $p(a,dx)$-a.e.\ $x\in S$. Because $\gamma(dx)\ll p(a,dx)$, $f(x)=1$ for $\gamma$-a.e.\ $x\in S$. Now take an arbitrary $b\in S$. Since $p(b,dx)\ll\gamma(dx)$, $f(x)=1$ for $p(b,dx)$-a.e.\ $x\in S$. By the inequality $(f(X_1)-1)(f(X_2)-1)\ge 0$, we have
\[f(b)={{\mathbb{E}}_{b}}[f({{X}_{1}})f({{X}_{2}})]\ge {{\mathbb{E}}_{b}}[f({{X}_{1}})]+{{\mathbb{E}}_{b}}[f({{X}_{2}})]-1=2\int_{S}{f(x)p(b,dx)}-1=1.\]
Therefore, $f(b)=1$.\\
$\blacksquare$ $f(x)<1$ for all $x\in S$:\\
In this case, $f(x)=0$ for $\gamma$-a.e.\ $x\in S$. For any $b\in S$, $f(x)=0$ for $p(b,dx)$-a.e.\ $x\in S$. By the inequality $f(X_1)f(X_2)\le f(X_1)$, we have
\[f(b)={{\mathbb{E}}_{b}}[f({{X}_{1}})f({{X}_{2}})]\le {{\mathbb{E}}_{b}}[f({{X}_{1}})]=\int_{S}{f(x)p(b,dx)}=0.\] 
Therefore, $f(b)=0$.

Finally, assume that condition (ii) is satisfied. Using the same technique as in Part (iii) and noting that $\gamma$ is fully supported on $S$ in the discrete topology, one has $f(x)\in\{0,1\}$ for all $x\in S$. Let $A=\{x\in S:\,f(x)=1\}$. If $A=\emptyset$ then $f(x)\equiv 0$. Otherwise, for any $a\in A$, we have $1=f(a)\le\mathbb{E}_a[f(X_1)]=p(a,A)$, which leads to $p(a,A)=1$. This implies that $A$ contains all the states that can be reached from an element of $A$ in one step. By the irreducibility of the Markov chain, $A=S$.
\section{DSY cascades resulting from probabilistic models}\label{prob_examples}
\label{examples}
In this section, we give four examples of non-explosive cascades motivated by probabilistic considerations. 

\subsection{A pure birth process}\label{yuleprocess}
The classical Yule process is a pure process starting from a single progenitor in which a particle survives for a mean $\lambda^{-1}$ (deterministic constant) exponentially distributed time before
being replaced by two offspring independently evolving in the same manner. 
The population is finite at every finite time (i.e.\ non-explosive) if and only if
\begin{equation}\label{eq:926203}
\zeta =\underset{n\to \infty }{\mathop{\lim }}\,\underset{|v|=n}{\mathop{\min }}\,\frac{1}{\lambda}\sum\limits_{j=0}^{n}T_{v|j}=\infty~~~\text{a.s.}
\end{equation}
The classical Yule cascades are the simplest case of DSY cascades where $\lambda(x)\equiv \lambda$. The branching Markov chain can be chosen arbitrarily. We can of course choose $X_v\equiv 1$ (deterministic). It is well-known that the classical Yule cascades are non-explosive. One can apply \autoref{main} with the choice $c=2\lambda$ to obtain an alternative proof by cut-set theory. In fact, \autoref{24201} implies the non-explosion of a more general pure birth process:
\begin{prop}\label{purebirth}
Let $\mu$ be a positive number. Consider a branching process starting with a single progenitor in which a particle, independently of all others, survives for a mean-one exponentially distributed time before being replaced by a random number of offspring. The offspring distributions are not necessarily the same across all the particles. Suppose that the expected number of offspring of each particle is less than $\mu$. Then the total population is finite at any finite time.
\end{prop}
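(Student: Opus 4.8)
The plan is to deduce \autoref{purebirth} directly from \autoref{24201} by running the pure birth process on its own random genealogical tree with all intensities equal to $1$. Let $\mathcal{T}\subset\mathbb{V}$ be this tree: the vertices are the particles, the root $\theta$ is the progenitor, a particle $v$ has $K_v$ children where $K_v$ is its (random) number of offspring, and $T_v$ is its mean-one exponential lifetime. After the canonical relabeling of particles by words in $\mathbb{V}$, the family $\{T_v\}_{v\in\mathbb{V}}$ is i.i.d.\ mean-one exponential and independent of $\{K_v\}_{v\in\mathbb{V}}$, so $(\{T_v\}_{v\in\mathbb{V}},\mathcal{T})$ is exactly a DSY cascade with all intensities equal to $1$ on the random tree $\mathcal{T}$; moreover, since the hypothesis forces $\mathbb{E}K_v<\mu<\infty$ for every particle $v$, each $K_v$ is a.s.\ finite and $\mathcal{T}$ is locally finite. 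By the observation recalled in \autoref{yuleprocess}, ``the total population is finite at every finite time'' is precisely the statement $\zeta=\lim_{n\to\infty}\min_{v\in\mathcal{T},\,|v|=n}\sum_{j=0}^{n}T_{v|j}=\infty$ almost surely, which is what must be proved.

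Next I would verify the hypothesis of \autoref{24201}. Because each particle produces its offspring independently of all the others, $\mathbb{E}[K_v\mid\{K_u\}_{|u|\le|v|-1}]=\mathbb{E}K_v<\mu$ for every $v$ (and in the more general reading where the offspring law of $v$ may depend on the earlier tree structure, this conditional bound is exactly the version of the hypothesis that should be used, and it is precisely what \autoref{24201} requires). Fixing any $\nu$ with $\nu>\max\{1,\mu\}$, we get
\[
\max\Bigl\{1,\ \mathbb{E}K_\theta,\ \sup_{v\in\mathcal{T}\setminus\{\theta\}}\mathbb{E}\bigl[K_v\mid\{K_u\}_{|u|\le|v|-1}\bigr]\Bigr\}\ \le\ \max\{1,\mu\}\ <\ \nu\ <\ \infty ,
\]
so \autoref{24201} applies: for any $\varepsilon\in\bigl(0,\log\tfrac{\nu}{\nu-1}\bigr)$, the set $B=(\varepsilon,\infty)$ is cutset-recurrent with respect to $(\{T_v\}_{v\in\mathbb{V}},\mathcal{T})$. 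Let $\{\tilde\Pi_B^{(k)}\}_{k\ge1}$ denote the resulting passage cutsets.

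Finally I would repeat the cutset endgame that closes the proof of \autoref{main}. Set $\tilde H_k=\max\{|v|:v\in\tilde\Pi_B^{(k)}\}$, a strictly increasing sequence; almost surely, for every $k$ and every $s\in\partial\mathcal{T}$ there is $\tilde l_k(s)\in[k,\tilde H_k)$ with $s|\tilde l_k(s)\in\tilde\Pi_B^{(k)}$, and hence $T_{s|\tilde l_k(s)}>\varepsilon$. Therefore, for every $n>\tilde H_k$,
\[
\min_{\substack{v\in\mathcal{T}\\ |v|=n}}\ \sum_{j=0}^{n}T_{v|j}\ \ge\ \min_{\substack{v\in\mathcal{T}\\ |v|=n}}\ \sum_{i=1}^{k}T_{v|\tilde l_i(s_v)}\ \ge\ k\varepsilon ,
\]
where $s_v\in\partial\mathcal{T}$ is any boundary path with $s_v|_{|v|}=v$. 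Letting $n\to\infty$ and then $k\to\infty$ gives $\zeta=\infty$ a.s., which is the asserted non-explosion.

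The argument is essentially bookkeeping, and the only point that needs a moment's care is the reduction in the first two paragraphs: one must check that the model's independence structure is strong enough that $(\{T_v\},\mathcal{T})$ is a genuine DSY cascade on a random tree and that the uniform conditional-mean bound demanded by \autoref{24201} holds. Once that is granted there is no further obstacle, since \autoref{24201} and the final argument of \autoref{main} carry all the analytic weight; in particular no new estimate beyond those two is needed.
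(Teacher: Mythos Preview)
Your proposal is correct and follows exactly the route the paper indicates: the paper does not give a separate proof of \autoref{purebirth} but simply remarks that it is implied by \autoref{24201}, and you have written out that implication in full, including the cutset endgame from the close of the proof of \autoref{main}. The only additions are the routine verifications (independence of clocks from the tree, the conditional-mean bound, the choice of $\nu>\max\{1,\mu\}$), all of which are handled correctly.
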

In \autoref{purebirth}, if the offspring distributions are the same across all the particles then the pure birth process  is a continuous-time Galton-Watson process in which the offspring distribution has a finite expected value. The non-explosion problem for the case of infinite expected offspring number was studied in \cite{amini2013}.


\subsection{A mean-field cascade (dependent first passage percolation on a tree)}
Consider a DSY cascade $\{\lambda(X_v)^{-1}T_v\}_{v\in\mathbb{T}}$ on the state space $S=(0,\infty)$ such that along each path $s\in\partial\mathbb{T}$, $\{X_{s|j}\}_{j\ge 1}$  is an i.i.d.\ sequence of random variables. The branching Markov chain $\{X_v\}_{v\in\mathbb{T}}$ clearly has properties (A), (B), (C), (D). Choose $c>0$ sufficiently large such that $\mathbb{P}(X_1>c)<1/2$. Then
\[I_n(a,c)=\mathbb{P}_a(X_{s|1}>c,X_{s|2}>c,\ldots,X_{s|n}>c)=(1/\mathbb{P}(X_1>c))^{-n}.\]
Therefore, the condition (E) holds for $\psi\equiv 1$ and $r=1/\mathbb{P}(X_1>c)$. By \autoref{main}, the cascade is non-explosive.

\subsection{A cascade with geometric-like sequence along each path}
Consider a DSY cascade $\{\lambda(X_v)^{-1}T_v\}_{v\in\mathbb{T}}$ on the state space $S=(0,\infty)$ in which the branching Markov chain has properties (A), (B), (C) and the transition probability density is given by
\[p(x,y)=\frac{e^{-y}}{1-e^{-2x}}\mathbbm{1}_{0<y<2x}.\]
Intuitively, $X_{v*1}$ can be as large as $2X_v$ but with a small probability. This allows the sequence $\{X_{s|j}\}_{j\ge 0}$ to behave like a geometric sequence up to any prescribed index. It is the geometric growth of intensities along each path that causes the explosion in the $\alpha$-Riccati equation for $\alpha>1$ (\autoref{alpha-ric}). We show that the present cascade is in fact non-explosive. Denote
\[{{I}_{n}}(a)={{\mathbb{P}}_{a}}({{X}_{s|1}},{{X}_{s|2}},\ldots ,{{X}_{s|n}}>1)=\int_{1}^{\infty }{\ldots \int_{1}^{\infty }{p}(a,{{y}_{1}})p({{y}_{1}},{{y}_{2}})\ldots p({{y}_{n-1}},{{y}_{n}})d{{y}_{n}}\ldots d{{y}_{1}}}.\]
If $a\le 1/2$ then $p(a,y_1)=0$ for all $y_1>1\ge 2a$. In this case, $I_n(a)=0$. If $a>1/2$ then
\[{{I}_{n}}(a)=\int_{1}^{2a}{\ldots \int_{1}^{2{{y}_{n-1}}}{\frac{{{e}^{-{{y}_{1}}}}}{1-{{e}^{-2a}}}\frac{{{e}^{-{{y}_{2}}}}}{1-{{e}^{-2{{y}_{1}}}}}\ldots \frac{{{e}^{-{{y}_{n}}}}}{1-{{e}^{-2{{y}_{n-1}}}}}d{{y}_{n}}\ldots d{{y}_{1}}}}.\]
Because
\[\int_{1}^{2{{y}_{n-1}}}{\frac{{{e}^{-{{y}_{n}}}}}{1-{{e}^{-2{{y}_{n-1}}}}}d{{y}_{n}}}=\frac{{{e}^{-1}}-{{e}^{-2{{y}_{n-1}}}}}{1-{{e}^{-2{{y}_{n-1}}}}}<{{e}^{-1}},\]
we have $I_n(a)\le e^{-1}I_{n-1}(a)$. Using this inequality repeatedly, we get
\[{{I}_{n}}(a)\le {{e}^{-n+1}}{{I}_{1}}(a)={{e}^{-n+1}}\frac{{{e}^{-1}}-{{e}^{-2a}}}{1-{{e}^{-2a}}}<{{e}^{-n}}\ \ \ \forall a>0,\,n\in\mathbb{N}.\]
Therefore, the condition (E) holds for $\psi\equiv 1$ and $r=e$. By \autoref{main}, the cascade is non-explosive.

\subsection{Birth-death branching Markov chain}
Consider a DSY cascade $\{\lambda(X_v)^{-1}T_v\}_{v\in\mathbb{T}}$ on the state space $S=\mathbb{N}$ in which the branching Markov chain $\{X_v\}_{v\in\mathbb{T}}$ has properties (A) and (B) with the transition probabilities given by
\[\mathbb{P}(X_{v*1}=j+1\,|\,X_v=j)=\mathbb{P}(X_{v*2}=j+1\,|\,X_v=j)=\beta_j,\]
\[\mathbb{P}(X_{v*1}=j-1\,|\,X_v=j)=\mathbb{P}(X_{v*2}=j-1\,|\,X_v=j)=\delta_j,\]
where $\beta_1=1$, and $\delta_j = 1-\beta_j\in (0,1)$ for $j=2,3,\dots$ Along each path $s\in\partial\mathbb{T}$, the sequence $X_{s|0}$, $X_{s|1}$, $X_{s|2}$, \ldots is the birth-death process on $S$
 with reflection at $1$ and
birth-death rates $\beta_j,\,  \delta_j$.  This is an ergodic time-reversible
Markov chain (see \cite{RB_EW2009}, Theorem 3.1(b), p. 241) with the invariant
probability
\begin{equation}
\gamma_j = \frac{\beta_2\cdots\beta_{j-1}}{\delta_2\cdots\delta_j}\gamma_1,\ \ \ j=2,3,\dots
\end{equation}
provided that
$$\gamma_1=\sum_{j=2}^\infty\frac{\beta_2\cdots\beta_{j-1}}{\delta_2\cdots\delta_j}<\infty.$$
In particular, this is the case when $\delta_2=\delta_3=\delta_4=\cdots=\delta\in(1/2,1)$. Since each state is visited infinitely often, the pathwise total waiting time  is infinite:
\[\sum_{j=0}^\infty\frac{T_{s|j}}{\lambda(X_{s|j})}=\infty \ \ \ \text{a.s.}\]
However, it will be shown below that the cascade can still be explosive.
\begin{prop}\label{expergodic}
Let $\delta_2=\delta_3=\cdots=\delta\in(0,\frac{1}{\sqrt{2}})$ and $\lambda(k)=b^k$ where $b>1$. Suppose that for each $v\in\mathbb{T}$, $X_{v*1}$ and $X_{v*2}$ are conditionally independent given $X_v$. Then the cascade is a.s.\ explosive for any initial state $X_\theta=a\in\mathbb{N}$. 
\end{prop}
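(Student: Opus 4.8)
The plan is to apply the explosion criterion \autoref{expcriterion}, after which a monotone comparison handles the full range $b>1$. First I would check that the branching Markov chain $\{X_v\}_{v\in\mathbb{T}}$ satisfies the hypotheses (A), (B), (D) of \autoref{expcriterion}: time-homogeneity and path-independence of the birth-death transition law are immediate from the description, and given $X_v=j$ the joint law of $(X_{v*1},X_{v*2})$ is, by the assumed conditional independence, the product of two copies of the one-step law determined by $\beta_j,\delta_j$, which does not depend on $v$; hence (D) holds. (Note that \autoref{expcriterion} does not require (C), but the conditional independence hypothesis of the proposition will be used below to evaluate the relevant expectation.)

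Next I would evaluate $\mathbb{E}_a[1/Z]$ for $Z=\max\{\lambda(X_1),\lambda(X_2)\}=b^{\max\{X_1,X_2\}}$, using that $k\mapsto b^k$ is increasing. For an initial state $a=k\ge 2$, conditional independence gives $\max\{X_1,X_2\}=k-1$ with probability $\delta^2$ and $=k+1$ with probability $1-\delta^2$, so
\[\mathbb{E}_k[1/Z]=\delta^2 b^{-(k-1)}+(1-\delta^2)b^{-(k+1)}=b^{-k}\bigl(\delta^2 b+(1-\delta^2)b^{-1}\bigr).\]
The boundary state $k=1$ must be treated separately because $\beta_1=1$: there $X_1=X_2=2$ deterministically, so $\mathbb{E}_1[1/Z]=b^{-2}=b^{-1}\cdot b^{-1}=b^{-1}\lambda(1)^{-1}$. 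Writing $g(b)=\delta^2 b+(1-\delta^2)b^{-1}$ and $\kappa=\max\{b^{-1},g(b)\}$, the hypothesis $\mathbb{E}_a[1/Z]\le\kappa\,\lambda(a)^{-1}$ of \autoref{expcriterion} then holds for every initial state $a$. An elementary analysis of $g$ shows $g(1)=1$ and $g(b)<1$ precisely for $1<b<(1-\delta^2)/\delta^2$, an interval which is nonempty exactly because $\delta<1/\sqrt2$; together with $b^{-1}<1$ for $b>1$ this gives $\kappa\in(0,1)$. Hence for $b$ in that interval, \autoref{expcriterion} yields $\mathbb{E}_a\zeta\le\lambda(a)^{-1}(1-\kappa)^{-1}<\infty$, and the cascade is a.s.\ explosive.

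To cover the remaining range $b\ge(1-\delta^2)/\delta^2$ I would use a pathwise monotone coupling. Keep the same branching chain $\{X_v\}_{v\in\mathbb{T}}$ and the same clocks $\{T_v\}_{v\in\mathbb{T}}$, and fix $b'\in(1,(1-\delta^2)/\delta^2)$ so that $b'<b$. Since $X_{v|j}\ge 1$, we have $b^{-X_{v|j}}\le (b')^{-X_{v|j}}$ for all $v$ and $j$, hence $\sum_{j=0}^{n}T_{v|j}b^{-X_{v|j}}\le\sum_{j=0}^{n}T_{v|j}(b')^{-X_{v|j}}$ for every $v$ with $|v|=n$; taking the minimum over $|v|=n$ and then $n\to\infty$ gives $\zeta\le\zeta'$ a.s., where $\zeta'$ is the explosion time of the cascade with intensities $\lambda'(k)=(b')^k$. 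By the previous paragraph $\zeta'<\infty$ a.s., so $\zeta<\infty$ a.s.\ for every initial state, completing the proof.

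I expect the only genuine subtlety to be recognizing that \autoref{expcriterion} applies directly only on the bounded parameter window $1<b<(1-\delta^2)/\delta^2$, so that the monotonicity of $\zeta_n$ in $b$ (a trivial but essential pathwise coupling) is needed to reach all $b>1$. The remaining ingredients — the two-case evaluation of $\mathbb{E}_a[1/Z]$ and the elementary inequality $g(b)<1$ — are routine, the one point requiring care being the reflecting boundary state $k=1$, where $\beta_1=1$ changes the offspring law and must be checked on its own.
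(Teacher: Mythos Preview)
Your proposal is correct and follows essentially the same route as the paper: apply \autoref{expcriterion} with $\kappa=\max\{b^{-1},\,\delta^2 b+(1-\delta^2)b^{-1}\}$ on the window $1<b<(1-\delta^2)/\delta^2$, handling the boundary state $a=1$ separately, and then extend to all $b>1$ by the pathwise comparison $\zeta\le\zeta'$ with a smaller base $b'$ in that window. The paper's proof is identical up to notation (it writes $c$ for your $\kappa$ and $q$ for your $b'$).
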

\begin{proof}
One can see that condition (D) is satisfied. Put $Z=\max\{\lambda(X_1),\lambda(X_2)\}$. Then $Z^{-1}=\min\{b^{-X_1},b^{-X_2}\}$. First, consider the case $b<\delta^{-2}-1$. Put
\[c=\max\left\{\frac{1}{b},\,b\delta^2+\frac{1-\delta^2}{b}\right\}<1.\]
By \autoref{expcriterion}, it suffices to show that $\mathbb{E}_a[Z^{-1}]\le cb^{-1}$ for all $a\in\mathbb{N}$. For $a=1$, $\mathbb{E}_1[Z^{-1}]=b^{-2}\le cb^{-1}$. For $a\ge 2$,
\begin{eqnarray*}
{{\mathbb{E}}_{a}}[{{Z}^{-1}}]&=&\frac{1}{{{b}^{a-1}}}{{\mathbb{P}}_{a}}(Z={{b}^{a-1}})+\frac{1}{{{b}^{a+1}}}{{\mathbb{P}}_{a}}(Z={{b}^{a+1}})\\
&=&\frac{1}{{{b}^{a-1}}}{{\mathbb{P}}_{a}}({{X}_{1}}={{X}_{2}}=a-1)+\frac{1}{{{b}^{a+1}}}{{\mathbb{P}}_{a}}({{X}_{1}}=a+1\text{\ \ or\ \ }{{X}_{2}}=a+1)\\
&=&\frac{1}{{{b}^{a-1}}}{{\delta }^{2}}+\frac{1}{{{b}^{a+1}}}(1-{{\delta }^{2}})\le \frac{c}{{{b}^{a}}}.
\end{eqnarray*}
By \autoref{expcriterion}, the cascade is non-explosive.
Next, we consider the case $b\ge \delta^{-2}-1$. Take $1<q<\delta^{-2}-1$ and denote
\[\zeta_q=\sup_{n\ge 0}\min_{|v|=n}\sum_{j=0}^{n}q^{-X_{v|j}}T_{v|j}.\]
We proved in the first case that $\mathbb{E}_a[\zeta_q]<\infty$. Observe that $\zeta\le\zeta_q$ and thus, $\mathbb{E}_a\zeta\le\mathbb{E}_a\zeta_q<\infty$. This implies $\zeta<\infty$ a.s.
\end{proof}
\begin{prop}\label{612211}
Let $\delta_2=\delta_3=\cdots=\delta\in(\frac{2+\sqrt{3}}{4},1)$. Then for any function $\lambda:\mathbb{N}\to(0,\infty)$, a DSY cascade $\{\lambda(X_v)^{-1}T_v\}_{v\in\mathbb{T}}$ with property \textup{(C)} is non-explosive for any initial state $X_\theta=a\in\mathbb{N}$. 
\end{prop}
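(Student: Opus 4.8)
The plan is to verify the cutset-recurrence condition (E) for the \emph{singleton} set $A=\{1\}$ and then invoke \autoref{cutest-red-thm} together with the argument in the proof of \autoref{main}. Properties (A) and (B) hold by construction and (C) is assumed, so only (E) requires work: I must produce $r>2$ and $\psi:\mathbb{N}\to(0,\infty)$ with $I_n(a,\{1\})\le\psi(a)r^{-n}$ for all $a\in\mathbb{N}$ and $n\ge 1$. Since $\lambda(1)<\infty$, the singleton $\{1\}$ will play exactly the role that $\lambda^{-1}((0,c])$ plays in \autoref{main}, with the constant $c$ replaced by $\lambda(1)$; in particular $\psi$ is automatically bounded on the (one-point) set $A$, so the hypotheses actually needed to run that proof are available.

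First I would express $I_n$ through a first-passage time. Let $\tau_1=\inf\{k\ge1:X_{s|k}=1\}$; by (A) and (B) its law depends only on the starting state, and $I_n(a,\{1\})=\mathbb{P}_a(\tau_1>n)\le r^{-n}\,\mathbb{E}_a[r^{\tau_1}]$ by Markov's inequality. Hence it suffices to exhibit $r>2$ for which $\mathbb{E}_a[r^{\tau_1}]<\infty$ for every $a$, and to take $\psi(a):=\mathbb{E}_a[r^{\tau_1}]$. Because the transition law from every state $j\ge2$ is that of the nearest-neighbour walk that steps down with probability $\delta$ and up with probability $1-\delta$, the strong Markov property decomposes, for $a\ge2$, the first passage $\tau_1$ into an i.i.d.\ sum of $a-1$ copies of the \emph{unit-descent time} $T$ of that biased walk; for $a=1$ one has $\tau_1\stackrel{d}{=}1+T$ since $\beta_1=1$. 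So the whole matter reduces to showing $\mathbb{E}[r^T]<\infty$ for some $r>2$, after which $\psi(a)=\mathbb{E}[r^T]^{\,a-1}$ for $a\ge2$ and $\psi(1)=r\,\mathbb{E}[r^T]$.

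Next I would compute $h(z)=\mathbb{E}[z^T]$ by first-step analysis: $h(z)=\delta z+(1-\delta)z\,h(z)^2$, so $h(z)=\big(1-\sqrt{1-4\delta(1-\delta)z^2}\big)\big/\big(2(1-\delta)z\big)$, which is finite precisely for $0<z\le\big(2\sqrt{\delta(1-\delta)}\big)^{-1}$ (and $h(1)=1$ when $\delta>\tfrac12$, so $T<\infty$ a.s.). The requirement that this radius exceed $2$ is $16\,\delta(1-\delta)<1$, equivalent to $\delta<\tfrac{2-\sqrt3}{4}$ or $\delta>\tfrac{2+\sqrt3}{4}$; the hypothesis $\delta\in(\tfrac{2+\sqrt3}{4},1)$ is exactly what is needed, and I may then fix any $r\in\big(2,\,(2\sqrt{\delta(1-\delta)})^{-1}\big)$, giving (E) with the $\psi$ above.

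With (E) in hand, \autoref{cutest-red-thm} gives that $\{1\}$ is cutset-recurrent with respect to $\{X_v\}_{v\in\mathbb{T}}$ and, $\psi$ being bounded on $\{1\}$, that the passage cutsets $\Pi_{\{1\}}^{(k)}$ have at most geometrically growing expected cardinality; in particular the reduced tree built from them has uniformly bounded expected offspring number. One then repeats the argument in the proof of \autoref{main} with $\{1\}$ in place of $\lambda^{-1}((0,c])$: along each path $s$ the cascade accumulates, for each $k$, a clock $T_{s|\ell_k}$ sitting at a vertex where $X=1$, hence carrying the fixed intensity $\lambda(1)$; applying \autoref{24201} to the clocks on the reduced tree forces the accumulated time to diverge, so $\zeta=\infty$ a.s.\ for every $X_\theta=a\in\mathbb{N}$. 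The only genuinely new computation is the identification of the radius of convergence of $h$ and the matching of it against the branching-induced threshold $r>2$, which is precisely what produces the constant $\tfrac{2+\sqrt3}{4}$; everything else is an application of the machinery already developed.
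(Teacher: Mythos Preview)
Your proof is correct and follows the same overall strategy as the paper: verify condition~(E) for the set $A=\{1\}$ by controlling the tail of the first-passage time $\tau_1$, then feed this into the cutset machinery.  The substantive difference is tactical.  The paper bounds $\mathbb{P}_a(\tau_1=N)$ using the explicit ballot-type formula for first passage of a biased simple random walk together with Stirling's approximation, and then sums over $N>n$; you instead apply Markov's inequality $\mathbb{P}_a(\tau_1>n)\le r^{-n}\mathbb{E}_a[r^{\tau_1}]$ and compute the moment generating function of the unit-descent time via first-step analysis.  Both routes yield the same exponential rate $r=(2\sqrt{\delta(1-\delta)})^{-1}$ and hence the same threshold $\tfrac{2+\sqrt3}{4}$.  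Your generating-function argument is shorter and avoids Stirling, at the cost of a slightly less explicit $\psi$; the paper's route gives a more concrete pointwise bound but is computationally heavier.

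You are also more careful than the paper on one point: since $\lambda$ is arbitrary, the set $\{1\}$ need not be of the form $\lambda^{-1}((0,c])$, so \autoref{main} does not literally apply.  The paper glosses over this and simply invokes \autoref{main}; you correctly observe that one must instead go back to \autoref{cutest-red-thm} (with $\psi$ trivially bounded on the singleton $A$) and rerun the reduced-tree argument of \autoref{24201} with the constant $\lambda(1)$ in place of $c$.  This is a genuine, if minor, improvement in rigor.
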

\begin{proof}
Any function from $\mathbb{N}$ to $(0,\infty)$ is locally bounded. By \autoref{main}, we only need to find $r>2$ and a function $\psi:\mathbb{N}\to(0,\infty)$ such that
\[\mathbb{P}_a(X_{s|1},X_{s|2},\ldots,X_{s|n}>1)\le\psi(a)r^{-n}\ \ \ \forall n,a\in\mathbb{N}.\]
As shown below, one can choose $r=\frac{1}{2\sqrt{
\delta(1-\delta)}}>2$ and $\psi(a)=\frac{a(\delta/\beta)^{(a-1)/2}}{1-2\sqrt{\beta\delta}}2\sqrt{\beta\delta}$. Fix a path $s\in\partial\mathbb{T}$ and write $X_0,X_1,X_2,\ldots$ in lieu of $X_{s|0},X_{s|1},X_{s|2},\ldots$ Denote
\[\tau=\min\{k\ge 1:\ X_k=1\}.\]
With the initial state $X_0=a\in\mathbb{N}$, $\tau$ can be viewed as the first passage time of a simple random walk on $\mathbb{Z}$ with probability of going to the left $\delta$, and probability of going to the right $\beta=1-\delta$. One has the estimates (\cite[p.\ 11]{RB_EW2009})
\begin{eqnarray*}
{{\mathbb{P}}_{a}}(\tau =N)&=&\frac{a}{N}\left( \begin{matrix}
   N  \\
   \frac{N+1-a}{2}  \\
\end{matrix} \right){{\beta }^{\frac{N+1-a}{2}}}{{\delta }^{\frac{N+a-1}{2}}}\\
&\le& a{{\left( \frac{\delta }{\beta } \right)}^{(a-1)/2}}\frac{1}{N}\left( \begin{matrix}
   N  \\
   N'  \\
\end{matrix} \right){{(\beta \delta )}^{N/2}}
\end{eqnarray*}
with $N'=[N/2]$ and with the convention that $\left( \begin{matrix}
   N  \\
   k  \\
\end{matrix} \right)=0$ if $k<0$ or $k$ is not an integer. Because $\delta>1/2$, the Markov chain is recurrent on $\mathbb{N}$. Thus, $\mathbb{P}_a(\tau=\infty)=0$. It is known that $\left( \begin{matrix}
   N  \\
   N'  \\
\end{matrix} \right)\lesssim {{2}^{N}}{{N}^{-1/2}}$
as a consequence of Sterling's formula $k!=\sqrt{2\pi k}k^ke^{-k}(1+\text{o}(k))$. Thus,
\[{{\mathbb{P}}_{a}}(\tau =N)\lesssim a{{\left( \frac{\delta }{\beta } \right)}^{(a-1)/2}}\frac{{{2}^{N}}}{{{N}^{3/2}}}{{(\beta \delta )}^{N/2}}\lesssim a{{\left( \frac{\delta }{\beta } \right)}^{a/2}}{{(2\sqrt{\beta \delta })}^{N}}.\]
We obtain
\begin{eqnarray*}
{{\mathbb{P}}_{a}}({{X}_{1}},{{X}_{2}},...,{{X}_{n}}>1)&=&{{\mathbb{P}}_{a}}(\tau >n)\\
&\lesssim& a{{\left( \frac{\delta }{\beta } \right)}^{a/2}}\sum\limits_{k=n+1}^{\infty }{{{(2\sqrt{\beta \delta })}^{k}}}\\
&\le& \psi (a){{r}^{-n}}.\end{eqnarray*}
\end{proof}
\begin{rem}
\autoref{expergodic} and \autoref{612211} do not give a conclusion about the explosion or non-explosion in the case $\delta_2=\delta_3=\cdots=\delta\in[\frac{1}{\sqrt{2}},\frac{2+\sqrt{3}}{4}]$ and $\lambda(k)=b^k$, $b>1$. However, by \autoref{01} (ii), we know that it must be a $0-1$ event.
\end{rem}

\section{DSY cascades resulting from differential equations}\label{diff_eq_examples}

Our first example serves as a precursor to the general method of associating a branching cascade structure to a quasilinear evolutionary partial differential equation. This method  goes back to McKean's treatment for the Fisher-KPP equation in the physical space \cite{mckean, MB1978} and Le Jan and Sznitman's treatment for the Navier-Stokes equations in the Fourier space \cite{lejan}.
\subsection{The $\alpha$-Riccati equation}\label{alpha-ric}
For $\alpha>0$, consider the $\alpha$-Riccati equation 
\begin{equation}\label{alpha-ricc}
u'(t)=-u(t)+u^2(\alpha t),\quad u(0)=u_0.
\end{equation} 
This equation can be viewed as a toy model for self-similar Navier-Stokes equations (see \cite{alphariccati}), It also appears from purely 
probabilistic models (see \cite{athreya, DA_PS_1988}).
After rewriting the equation in the mild formulation
\[
u(t)=u_0e^{-t}+\int_{0}^{t}e^{-s}u^2({\alpha(t-s)})\,ds,
\]
we can interpret $u$ as the expected of a stochastic functional $U(t)$ defined implicitly by
\[U(t)=u_0\,\IND{T_\theta\ge t}+U^{(1)}\left(\alpha(t-T_\theta)\right)\,U^{(2)}\left(\alpha(t-T_\theta)\right)\,\IND{T_\theta<t },\]
where $U^{(1)}$ and $U^{(2)}$ are two independent copies of $U$. Thus, the stochastic functional $U(t)$ is defined over the stochastic structure $\left\{{\alpha^{-|v|}}{T_v}\right\}_{v\in\mathbb{T}}$. In the notation of the present paper, this is a self-similar DSY cascade with $\lambda(x)=x$ and $X_v=\alpha^{|v|}$ (deterministic). The explosion problem of the $\alpha$-Riccati equation, especially in the connection with the existence and uniqueness of solutions, was studied in detail in \cite{athreya,alphariccati}. The cascade was known to be explosive if and only if $\alpha>1$. \autoref{expcriterion} provides an alternate justification.   Indeed, the ratios along each path are $R_{s|j}={X_{s|n}}/{X_{s|n-1}}=\alpha$. With $X_\theta=a>0$, $Z=\max\{X_1,X_2\}=\alpha a$. We have
\[\mathbb{E}_a[Z^{-1}]=\frac{1}{\alpha a}=\frac{c}{a}\]
with $c=1/\alpha<1$. The non-explosion of the cascade of the $\alpha$-Riccati equation for $\alpha\le 1$ can be inferred from the non-explosion of the standard Yule cascade (see \cite{complexburgers,yule}). If $\alpha<1$, \autoref{multibran} provides an alternative justification. Indeed, with $b>0$ sufficiently large, $\mathbb{E}[R_1^b]=\alpha^b<1/2$.
Note that the case $\alpha=1$ results in a classical Yule process, which is non-explosive.

In the case $\alpha\le1$, the stochastic non-explosion  was exploited in \cite{complexburgers} to prove existence and uniqueness results as well as long-time behavior of solutions to \eqref{alpha-ricc}. In the case $\alpha>1$, the stochastic explosion was used to prove a nonuniqueness result in \cite{athreya} for the case $u_0=0$ and \cite{alphariccati} developed a framework for using stochastic explosion and distribution of branches of the underlying cascade to prove both non-uniqueness and finite-time blowup of the solutions
for more general initial data.

\subsection*{DSY cascades associated with evolutionary PDEs in Fourier space}

The next examples concern the DSY cascades originating from partial differential equations.  A common feature of these equations is that they define a dissipative dynamical system in which, when formulated in the Fourier space, the linear term determines the intensity of the random waiting time and the quadratic nonlinear term leads to a random binary tree.  In all of these examples, the equations can be written in the Fourier space as
\begin{equation}
\label{generic}
\hat{u}(\xi,t) = e^{-\lambda(\xi)t} \hat{u}_0(\xi) + \int_0^t e^{-\lambda(\xi)s} \lambda(\xi) \rho(\xi) \int_{\mathbb{R}^d} B(\hat{u}(\eta,t-s), \hat{u}(\xi-\eta,t-s)) d \eta ds
\end{equation}
where $\lambda$, $\rho$ are radially symmetric positive functions, and $B$ is a bilinear map. The functions $\lambda$, $\rho$, $B$ will be determined by the specific PDE under consideration. 
A key step in the probabilistic reformulation of \eqref{generic} is to find a function $h(\xi)$ such that 
\begin{equation}\label{functionH}H(\eta|\xi) = \frac{\rho(\xi)}{ h(\xi)}h(\eta) h(\xi-\eta)\end{equation}
is a probability density function on $\mathbb{R}^d$.  Once $h$ is identified, we introduce the new unknown $\chi(\xi,t) = \hat{u}(\xi,t)/h(\xi)$, which satisfied the normalized equation
\begin{equation}
\label{genericfourier}
{\chi}(\xi,t) = e^{-\lambda(\xi)t} \chi_0(\xi) + \int_0^t e^{-\lambda(\xi)s} \lambda(\xi) \int_{\mathbb{R}^d} B(\chi(\eta,t-s), \chi(\xi-\eta,t-s)) H(\eta|\xi)\; d\eta ds.
\end{equation}

Equation \eqref{genericfourier} leads to a family of wave numbers $\{W_v\}_{v\in\mathbb{T}}$ satisfying $W_\theta=\xi$, $W_{v*1}+W_{v*2} = W_v$ for all $v\in\mathbb{T}$, and conditionally given $W_v$, $W_{v*1}$ and $W_{v*2}$ are each distributed as $H(\cdot|W_v)$. For $X_v=W_v$, one gets a DSY cascade $\{\lambda(X_v)^{-1} T_v\}_{v\in\mathbb{T}}$. In most cases, the holding times between 
branchings only depends on 
the magnitudes of the  random wave numbers which, in turn,  
have a well-behaved branching Markov structure. For example, for the Navier-Stokes equations, the choice of  $X_v=|W_v|$ turns out to be more efficient than the choice of $X_v=W_v$.

Similarly to the $\alpha$-Riccati equation (\autoref{alpha-ric}),  the solution $\chi(\xi,t)$ can be expressed as the expected value of a ``{\em solution}"  
stochastic functional $\mathcal{X}(\xi,t)$ defined over the DSY cascade by
\begin{equation}\label{gen-recursion}
\mathcal{X}(\xi ,t)=\left\{ \begin{array}{*{35}{r}}
   {{\chi }_{0}}(\xi ) & \text{if} & {{T}_{\theta }}/\lambda(\xi)\ge t  \\
   B\left({\mathcal{X}^{(1)}}({{W}_{1}},t-{{T}_{\theta }}),{\mathcal{X}^{(2)}}({{W}_{2}},t-{{T}_{\theta }})\right) & \text{if} & {{T}_{\theta }}/\lambda(\xi)< t  \\
\end{array} \right.
\end{equation}
where $\mathcal{X}^{(1)}$ and $\mathcal{X}^{(2)}$ are (conditionally) independent copies of $\mathcal{X}$.
The stochastic explosion or non-explosion of the associated DSY cascades has direct implications to the existence and uniqueness of global-in-time solutions of these equations \cite{chaos, alphariccati, smallness}.

We will illustrate the aforementioned generic scheme in greater detail for the Fisher-KPP equation (on the Fourier side) in the next subsection. Note that the stochastic structure  on the Fourier side (namely, DSY cascade) is very different  from that on the physical side (namely, branching Brownian motion) which was identified in \cite{mckean, MB1978}.  

\subsection{The Fourier-transformed KPP equation}\label{kppcascade}
The KPP equation introduced by Fisher, Kolmogorov, Petrovsky and Piskunov, also referred to as the F-KPP equation, has the general form
\[u_t=Du_{xx}+ru(1-u)~~~\forall t>0,\,x\in\mathbb{R}\]
where $D$ and $r$ are positive constant. McKean constructed a cascade model for this equation in the physical domain \cite{mckean, MB1978}. We now construct a cascade for this equation in the Fourier domain. By rescaling the time and space variables, we can assume that $D=r=1$. Then by introducing $v=1-u$, we get
\[v_t=v_{xx}+v^2-v~~~\forall t>0,\,x\in\mathbb{R}.\]
Taking Fourier transform with respect to $x$, we get
\[{{\hat{v}}_{t}}=-(1+{{\xi }^{2}})\hat{v}+\frac{1}{\sqrt{2\pi }}\hat{v}*\hat{v}.\]
In the integral form,
\[\hat{v}(\xi ,t)={{e}^{-(1+{{\xi }^{2}})t}}{{{\hat{v}}}_{0}}(\xi )+\int_{0}^{t}{\int_{-\infty }^{\infty }{{{e}^{-(1+{{\xi }^{2}})s}}\hat{v}(\eta ,t-s)\hat{v}(\xi -\eta ,t-s)d\eta ds}}.\]
We normalize $\hat v$ by $\chi(\xi,t)=\frac{\hat v(\xi,t)}{h(\xi)}$ where $h$ is a positive function to be determined. Function $\chi$ satisfies the equation
\begin{equation}\label{eq:919291}\chi (\xi ,t)={{e}^{-(1+{{\xi }^{2}})t}}{{\chi }_{0}}(\xi )+\int_{0}^{t}{\int_{-\infty }^{\infty }{(1+{{\xi }^{2}}){{e}^{-(1+{{\xi }^{2}})s}}\chi (\eta ,t-s)\chi (\xi -\eta ,t-s)H(\eta |\xi ) d\eta ds}}\end{equation}
where $H(\eta|\xi)=\frac{h(\eta)h(\xi-\eta)}{(1+\xi^2)h(\xi)}$. For $H$ to be a probability density function, $h$ must satisfy the equation \[h*h=(1+\xi^2)h.\]
The function $w(x)=\sqrt{2\pi}\check{h}(x)$ satisfies $w''=w-w^2$, which has a solution $w(x)=\frac{3}{1+\cosh x}$. It follows that
\begin{equation}\label{functionh}h(\xi)=\frac{3\xi}{\sinh(\pi\xi)}.\end{equation}
\begin{figure}
  \centering
  \includegraphics[scale=0.6]{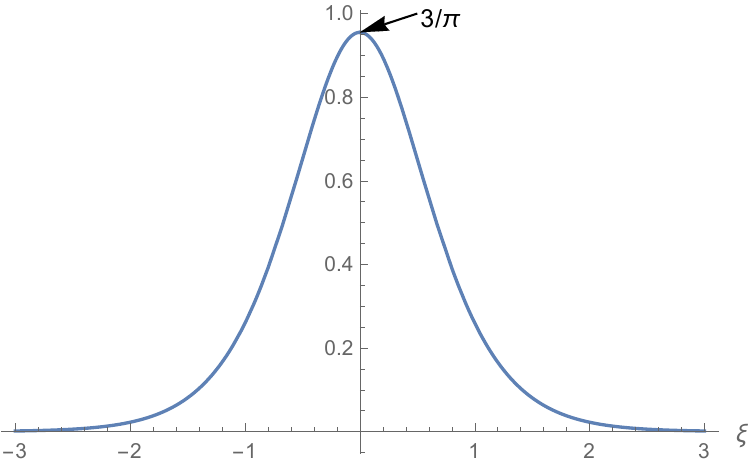}
  \caption{Graph of $h(\xi)=3\xi/\sinh(\pi\xi)$.}
  \label{graph-h}
\end{figure}
Then the representation \eqref{eq:919291} is equivalent to $\chi=\mathbb{E}_\xi[\mathcal{X}(\xi,t)]$ where $\mathcal{X}$ is a stochastic functional defined recursively by
\begin{equation}\label{recursion}
\mathcal{X}(\xi ,t)=\left\{ \begin{array}{*{35}{r}}
   {{\chi }_{0}}(\xi ) & \text{if} & {{T}_{\theta }}\ge t  \\
   {\mathcal{X}^{(1)}}({{W}_{1}},t-{{T}_{\theta }}){\mathcal{X}^{(2)}}({{W}_{2}},t-{{T}_{\theta }}) & \text{if} & {{T}_{\theta }}< t  \\
\end{array} \right.
\end{equation}
where $\mathcal{X}^{(1)}$ and $\mathcal{X}^{(2)}$ are i.i.d.\ copies of $\mathcal{X}$. This definition, when applied recursively, leads to a family of exponential mean-one clocks $\{T_v\}_{v\in\mathbb{T}}$ and a family of frequencies $\{W_v\}_{v\in\mathbb{T}}$ described as follows. For $\xi\in\mathbb{R}$ and $t>0$, we start a branching process with a particle located at $W_\theta=\xi$. The particle is time $t$ away from the horizon. The clock runs for an exponential time $\bar{T}_\theta\sim{\rm Exp}(1+\xi^2)$. If $\bar{T}_\theta>t$ then the branching process stops (no branching occurs). Otherwise, the particle dies and is replaced by two particles. A random variable $W_1\in\mathbb{R}$ is then sampled according to the p.d.f.\ $H(\eta|\xi)$. The first particle is placed at $W_1$ and the second particle is placed at $W_2=\xi-W_1$. Each particle is now $t-\bar{T}_\theta$ away from the time horizon. These particles continue to evolve by the same rule, independently of each other. Note that $T_v=\bar{T}_v/(1+W_v^2)\sim \text{Exp}(1)$. Therefore, we can associate the equation \eqref{eq:919291} with a DSY cascade $\{\lambda(X_v)^{-1}T_v\}_{v\in\mathbb{T}}$ where $X_v=W_v$, $\lambda(\xi)=1+\xi^2$ and $p(\xi,\eta)=H(\eta|\xi)$. The corresponding explosion time is
\[\zeta =\zeta (\xi )=\underset{n\to \infty }{\mathop{\lim }}\,\underset{|v|=n}{\mathop{\min }}\,\sum\limits_{j=0}^{n}{\frac{{{T}_{v|j}}}{1+W_{v|j}^{2}}}.\]
The stochastic functional $\mathcal{X}$ is a.s.\ well-defined by \eqref{recursion} for all $t>0$ if and only if $\zeta=\infty$ a.s. We will apply \autoref{main} and \autoref{criterion1} to show the non-explosion of the cascade. The same cascade was analyzed by the authors in \cite[Example 5.4]{part1_2021} and Orum in \cite[Sec.\ 7.9]{orum}. The non-explosion was proved by using a large deviation method together with a spectral radius technique in \cite{part1_2021}, and by exploiting the uniqueness of solutions to the KPP equation in \cite{orum}.

We start the proof of the non-explosion by observing that the function $h$ given by \eqref{functionh} is even and decreasing on $(0,\infty)$. According to \autoref{concave} below, $h$ is logarithmically concave on $(0,\infty)$. Thus,
\[h(\eta )h(\xi -\eta )=h(|\eta |)h(|\xi -\eta |)\le {{h}^{2}}\left( \frac{|\eta |+|\xi -\eta |}{2} \right)\le {{h}^{2}}\left( \frac{|\xi |}{2} \right)={{h}^{2}}\left( \frac{\xi }{2} \right).\]
Now fix a number $\kappa\in(1/2,1)$. Using the fact that $h$ is bounded from above by $3/\pi$, we have
\[h(\eta )h(\xi -\eta )={{[h(\eta )h(\xi -\eta )]}^{1-\kappa }}{{[h(\eta )h(\xi -\eta )]}^{\kappa }}\le {{c}_{1}}h{{(\eta )}^{1-\kappa }}{{h}^{2\kappa }}\left( \frac{\xi }{2} \right)\]
where $c_1=(3/\pi)^{1-\kappa}$. Therefore, $p(\xi,\eta)\le \phi_1(\xi)\phi_2(\eta)$ where $\phi_1(\xi)=c_1h^{2\kappa}(\xi/2)/h(\xi)$ and $\phi_2(\eta)=h(\eta)^{1-\kappa}$. Since $2\kappa>1$, $\phi_1$ decays exponentially as $\xi\to\pm\infty$. Thus, \autoref{criterion1} is satisfied with $b=1$.
\begin{lem}\label{concave}
Let $h(x)=\frac{3x}{\sinh(\pi x)}$. Then $h(x)^th(y)^{1-t}\le h(tx+(1-t)y)$ for all $x,y>0$, $t\in(0,1)$.
\end{lem}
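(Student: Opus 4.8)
The plan is to show that $\log h$ is concave on $(0,\infty)$; once this is known, the definition of concavity gives $\log h\bigl(tx+(1-t)y\bigr)\ge t\log h(x)+(1-t)\log h(y)$, and exponentiating both sides yields the asserted inequality $h(x)^t h(y)^{1-t}\le h(tx+(1-t)y)$. Since $h(x)=3x/\sinh(\pi x)$ is strictly positive and $C^\infty$ on $(0,\infty)$, it suffices to verify that $(\log h)''\le 0$ there.

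First I would write $\log h(x)=\log 3+\log x-\log\sinh(\pi x)$ and differentiate: $(\log h)'(x)=\tfrac1x-\pi\coth(\pi x)$ and $(\log h)''(x)=-\tfrac1{x^2}+\dfrac{\pi^2}{\sinh^2(\pi x)}$. Hence $(\log h)''(x)\le 0$ is equivalent to $\sinh^2(\pi x)\ge \pi^2 x^2$, i.e.\ (taking positive square roots) to the elementary inequality $\sinh u\ge u$ for $u\ge 0$, which is immediate from $\sinh 0=0$ together with $(\sinh u)'=\cosh u\ge 1$ (or from the Taylor series $\sinh u=u+u^3/6+\cdots$). This establishes concavity of $\log h$ on $(0,\infty)$ and finishes the proof.

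There is no serious obstacle: the only points requiring care are that $h$ is strictly positive on $(0,\infty)$, so that $\log h$ is defined and concavity of $\log h$ is genuinely equivalent to the multiplicative inequality, and the textbook bound $\sinh u\ge u$. An alternative route would use the Weierstrass factorization $\sinh(\pi x)=\pi x\prod_{n\ge 1}(1+x^2/n^2)$, rewriting $-\log h(x)=\mathrm{const}+\sum_{n\ge 1}\log(1+x^2/n^2)$ and attempting a termwise argument; but the individual summands $\log(1+x^2/n^2)$ are \emph{not} convex for $x>n$ (their second derivative is $2(n^2-x^2)/(n^2+x^2)^2$), so the direct second-derivative computation above is the cleaner approach.
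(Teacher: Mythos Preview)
Your proof is correct and follows essentially the same approach as the paper: set $k=\log h$, compute $k''(x)=-1/x^2+\pi^2/\sinh^2(\pi x)$, and use $\sinh(\pi x)>\pi x$ to conclude $k''<0$, hence $\log h$ is concave. The added remarks on positivity of $h$ and on the Weierstrass-product alternative are fine but not needed.
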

\begin{proof}
Put $k(x)=\log h(x)$. By basic differentiations,
$k''(x)=-\frac{1}{x^2}+\frac{\pi^2}{\sinh^2(\pi x)}$.
Using the fact that $\sinh(\pi x)>\pi x$ for all $x>0$, we have
\[k''(x)<-\frac{1}{x^2}+\frac{\pi^2}{(\pi x)^2}=0.\]
Therefore, $k$ is concave on $(0,\infty)$.
\end{proof}

\subsection{The complex Burgers equation}
Consider the one-dimensional Burgers equation \[u_t=u_{xx}-uu_x,\ \ \ u(x,0)=u_0(x),\] where $u=u(x,t)$ is a complex-valued function. In the analysis of self-similar solutions in the Fourier domain \cite{complexburgers}, the equation is associated with a self-similar DSY cascade 
$\left\{ \lambda({X_v})^{-1}{{{T}_{v}}} \right\}_{v\in \mathbb{T}}$
in which $\lambda(x)=x^2$ and the ratios $R_{s|n}={X_{s|n}}/{X_{s|n-1}}$ are uniformly distributed on $(0,1)$. Since $\mathbb{E}[R_{1}^2]=1/3$, the cascade is non-explosive according to \autoref{multibran}.

\subsection{Bessel cascade of the three-dimensional NSE}\label{besselcascade}
The $d$-dimensional incompressible Navier-Stokes equations are given by

  \begin{equation}\tag{NSE}\label{NSE}
  \left\{ {\begin{array}{*{20}{rcl}}
   {{\partial _t}u  + u\cdot \nabla u  = \Delta u-\nabla p}&~~{\rm in}&\mathbb{R}^d\times(0,\infty),  \\
   {{\rm div}\, u = 0}&~~{\rm in}&\mathbb{R}^d\times(0,\infty),  \\
   {u(\cdot,0) = {u_0}}&~~{\rm in}&\mathbb{R}^d.  \\
\end{array}} \right.
\end{equation}
In the general formulation \eqref{genericfourier} for the Navier-Stokes equations, $\rho(\xi)=|\xi|^{-1}$ and the branching distribution of wave numbers is given by
\begin{equation}\label{branchingdist}H(\eta |\xi )=\frac{h(\eta )h(\xi -\eta )}{|\xi |h(\xi )}.\end{equation}
The function $h:\mathbb{R}^d\backslash\{0\}\to(0,\infty)$ satisfies the equation $h*h=|\xi|h$ and is called a \emph{standard majorizing kernel} \cite{rabi}. In three dimensions, the Bessel cascade is the DSY cascade corresponds to the choice of $h(\xi)=\frac{1}{2\pi}\frac{e^{-|\xi|}}{|\xi|}$ (see e.g. \cite{rabi, chaos, lejan}, \cite[Prop.\ 3.8]{orum}). In the analysis of the explosion problem, the choice $X_v=|W_v|$ turns out to be more efficient than the choice $X_v=W_v$. The corresponding function $\lambda$ is $\lambda(x)=x^2$. With $y=|\eta|$ and $x=|\xi|$, the branching distribution of $X_1=|W_1|$ given $X_\theta=x$ is (see \cite{chaos}):
 \[p\left( x,y \right) =  \left\{ \begin{array}{*{35}{l}}
   \frac{{{e}^{2x}}-1}{x}{{e}^{-2y}} & \text{if} & 0<x\le y,  \\[6pt]
   \frac{1-{{e}^{-2y}}}{x} & \text{if} & 0<y<x.  \\
\end{array}\right . \]
Along each path $s\in\partial\mathbb{T}$, the sequence $\{X_{s|j}\}_{j\ge 0}$ is a time-reversible Markov chain with the invariant probability density
\[\gamma(x)=4xe^{-2x}.\]
The non-explosion of the Bessel cascade was proved in \cite[Example 5.2]{part1_2021} via a large deviation method. Here we present an alternative proof using \autoref{main} and \autoref{criterion}. The conditions (i)--(v) in \autoref{criterion} are satisfied for the choice of
\[\psi_1(x)=\frac{e^{2x}-1}{4x},\ \psi_2(x)=e^{-2x},\ c_1=c_2=2,\,\alpha=5.\]
Therefore, the DSY cascade $\left\{X_v^{-2}T_v\right\}_{{v\in\mathbb{T}}}$ is non-explosive.

\section{Cascade for the Navier-Stokes equations with scale-invariant kernel}
\label{NSEExplosion}
This section explores the explosion/non-explosion problem of a self-similar DSY cascade naturally associated with the $d$-dimensional incompressible Navier-Stokes equations with $d\ge 3$.
Based on the general scheme \eqref{genericfourier}, the present cascade corresponding to $\rho(\xi)=|\xi|^{-1}$ and the choice of standard majorizing kernel $h(\xi)=C_d|\xi|^{1-d}$, called the \emph{scale-invariant kernel}. 
The probability density function $H(\cdot|\xi)$ is
\begin{equation}\label{ssH_d}
H(\eta|\xi)=C_d\frac{|\xi|^{d-2}}{|\eta|^{d-1}|\xi-\eta|^{d-1}}.
\end{equation} 
In contrast to the Bessel kernel discussed above, this family of kernels scales according to the natural scaling of (NSE), namely 
\[u(x,t)\to\kappa u(\kappa x,\kappa^2 t),~~p(x,t)\to\kappa^2 p(\kappa x,\kappa^2 t),~~u_{0}(x)\to\kappa u_0(\kappa x),\ \ \kappa\in\mathbb{R}.\]
Choose the branching Markov chain $X_v=|W_v|$. The corresponding intensity function is $\lambda(x)=x^2$. In the case $d=3$, the ratios $R_{s|n}={X_{s|n}}/{X_{s|n-1}}$ have the dilogarithmic distribution with density
 \begin{equation}\label{dilogfunc}f_3(r)=\frac{2}{\pi^2}\frac{1}{r}\ln\frac{r+1}{|r-1|}\end{equation}
 and the corresponding self-similar DSY cascade is referred to as the {\em dilogarithmic cascade} \cite{chaos}. It was shown in \cite{chaos} that, for $d=3$, the event of non-explosion $[\zeta=\infty]$ is a $0-1$ event and that a.s.\ explosion does not occur along any deterministic path $s\in\partial\mathbb{T}$, i.e.\ $\sum_{j=0}^\infty X_{s|j}^{-2}T_{s|j}=\infty$ (see \cite[Cor.\ 5.1]{chaos}). \autoref{01} (i) implies that the non-explosion is a $0-1$ event for any dimension $d\ge 3$. We will show that non-explosion occurs with probability one for $d\ge 12$ (\autoref{nonexp11}), and with probability zero for $d=3$ (\autoref{exp3}).

 \begin{rem}
 In the construction of solutions to the 3-dimensional incompressible Navier-Stokes equations, Le Jan and Sznitman circumvented the problem of stochastic explosion by introducing a clever, albeit adhoc, 
critical coin tossing device. This technique assures the a.s.\ termination of the branching after finitely many steps.
Subcritical coin tosses could also be implemented for the 
same effect (see \cite{rabi}).   The elimination of the coin-toss device naturally leads to 
an intrinsic explosion problem for the branching cascade itself.  This problem depends on the Markov transition probabilities of the wave numbers $H(\cdot|\xi)$, which is determined by the choice of $h$. In the construction of global or local-in-time solutions, Bhattacharya et al \cite{rabi} relaxed the condition on $h$ to only $h*h\le C|\xi|^\theta h$ for some constants $\theta\le 1$ and $C>0$ (called the \emph{majorizing kernel}). Majorizing kernels determine the space of initial data suitable for the construction of solutions. 
While our focus here is on zero forcing, nonzero forcing can also be accommodated
in this framework (see \cite{chaos}).
\end{rem}
 
 \medskip
 
We will derive the branching distribution of $X_1=|W_1|$ given $X_\theta=|\xi|$ as follows. Write $x=|\xi|$, $y=|\eta|$ and $z=|\xi-\eta|$ and let $\phi$ be the angle between $\xi$ and $\eta$. Then $z=\sqrt{x^2-2xy\cos\phi+y^2}$. Put $k(x)=h(\xi)=C_dx^{1-d}$. Choose the spherical coordinates in $\mathbb{R}^d$ by $\eta=(y\cos\phi,\,y(\sin\phi)w)$ where $w\in \mathbb{S}^{d-2}$, the unit sphere in $\mathbb{R}^{d-1}$. Then $d\eta=y^{d-1}\sin^{d-2}\phi\, dy d\phi dw$. 
Therefore, 
\begin{equation}\label{W-dist2}
H(\eta|\xi)\,d\eta = {{C}_{d}}{\frac{x^{d-2}{{\sin }^{d-2}}\phi }{{{\left( {{x}^{2}}-2xy\cos \phi +{{y}^{2}} \right)}^{\frac{d-1}{2}}}}\,dy d\phi d w }.
\end{equation}
For any smooth function $g(y)$,
\begin{eqnarray*}
{{\mathbb{E}}_{x}}g(|{{W}_{1}}|)&=&\int_{{{\mathbb{R}}^{d}}}{g(|\eta |)H(\eta|\xi)\,d\eta }\\
&=&\int_{0}^{\infty }{\int_{0}^{\pi }{\int_{{\mathbb{S}^{d-2}}}^{{}}{g(y)\, {C}_{d}}{\frac{x^{d-2}{{\sin }^{d-2}}\phi }{{{ \left( {{x}^{2}}-2xy\cos \phi +{{y}^{2}} \right)}^{\frac{d-1}{2}}   }}\,  dwd\phi dy}}}\\
&=&|{\mathbb{S}^{d-2}}|{{C}_{d}}{{x}^{d-1}}\int_{0}^{\infty }{\int_{0}^{\pi }{g(y)\frac{{{\sin }^{d-2}}\phi }{{  \left( x^{2}-2xy\cos \phi +y^2 \right)^{\frac{d-1}{2}} }}d\phi dy}}.
\end{eqnarray*}
Thus, the distribution of $|W_1|$ given $|\xi|$ has a density 
\begin{equation}\label{X-dist1}
p\left( x,y \right)={{c}_{d}}{{x}^{d-1}}\int_{0}^{\pi }{\frac{{{\sin }^{d-2}}\phi }{{{\left( {{x}^{2}}-2xy\cos \phi +{{y}^{2}} \right)}^{\frac{d-1}{2}}}}d\phi },
\end{equation}
where 
\begin{equation}\label{c_d}
{{c}_{d}}=|{\mathbb{S}^{d-2}}|{{C}_{d}}=\frac{\Gamma \left( \frac{d-1}{2} \right)}{\Gamma \left( \frac{d-2}{2} \right)}\frac{2}{{{\pi }^{3/2}}}.
\end{equation}
The ratios $R_{s|n}={X_{s|n}}/{X_{s|n-1}}$ have a density
\begin{equation}\label{density}f_d(r)=p(1,r)={{c}_{d}}\int_{0}^{\pi }{\frac{{{\sin }^{d-2}}\phi }{{{\left( {{1}}-2r\cos \phi +{{r}^{2}} \right)}^{\frac{d-1}{2}}}}d\phi }\end{equation}
which is independent of $s\in \partial\mathbb{T}$ and $n\in\mathbb{N}$.  Therefore, for any $d\ge 3$, $\{X_v^{-2}T_v\}_{v\in\mathbb{T}}$ is a self-similar DSY cascade with the ratio distribution given by \eqref{density}. 
\begin{rem}
One can check with basic calculations that for $d=3$, the distribution of $R_1$ is 
symmetric with respect to the group identity $r=1$ on $(0,\infty)$, i.e.\ the median of $R_1$ is equal to 1. If $d\ge 4$, one can show $\int_0^1f_d(r)dr>\int_1^\infty f_d(r)dr$ by using the change of variable $r\mapsto 1/r$. Thus, the median of $R_1$ is less than 1, which better facilitates the non-explosion of the cascade. At this heuristic level, the non-explosion is more likely to happen for $d\ge 4$ than for $d=3$. One may suspect that for very large $d$, the self-similar cascade of the Navier-Stokes equations is non-explosive. The problem will be further discussed in the next section.
\end{rem}

\medskip

Besides the spherical coordinates, one can also express $H(\eta|\xi)d\eta$ in the coordinates $(\phi_1=\phi,\phi_2,w)$ where $\phi_2$ is the angle between $\xi$ and $\xi-\eta$. Using the sine rule in the triangle made by $\xi$, $\eta$, $\xi-\eta$, we get 
\[y=x\frac{\sin \phi_2}{\sin(\pi-\phi_1-\phi_2)}=x\frac{\sin\phi_2}{\sin(\phi_1+\phi_2)}.\]
Then changing variables in (\ref{W-dist2}) from $(y,\phi,w)$ to $(\phi_1,\phi_2,w)$, noting the Jacobian $\frac{\partial(y,\phi,w)}{\partial(\phi_1,\phi_2,w)}=\frac{-x\sin\phi_1}{\sin^2(\phi_1+\phi_2)}$, we obtain
 \begin{equation}\label{W-dist3}
H(\eta|\xi)\,d\eta = {{C}_{d}}\,{\sin }^{d-3}(\phi_1+\phi_2) \,d\phi_1 d \phi_2 dw ,
\end{equation}
where $(\phi_1,\phi_2)\in \Delta:=\{(\phi_1,\phi_2):\ \phi_1,\phi_2\ge 0,\ \phi_1+\phi_2\le\pi\}$. This is the triangle formed by 
$\xi/|\xi|,$ $\eta/|\xi|$, and $(\xi-\eta)/|\xi|$. As $\eta$ is randomized according to $H(\cdot|\xi)$, $\phi_1$ and $\phi_2$ are values of the corresponding random variables $\Phi_1$ and $\Phi_2$. The joint distribution density of the random angles $(\Phi_1,\Phi_2)$ conditionally given $e_{\xi}=\xi/|\xi|$ is
\begin{equation}\label{angle-distr}
S_d(\phi_1,\phi_2)=c_d {\sin }^{d-3}(\phi_1+\phi_2), 
\end{equation}
where $c_d$ is given by (\ref{c_d}).
It is easy to see that $(\Phi_1,\Phi_2)$ is uniformly distributed in the triangle $\Delta$ if and only if  $d=3$. 

\subsection{Non-explosion of the self-similar Navier-Stokes cascades in high dimensions}
As $d\to\infty$, $S_d$ given by \eqref{angle-distr} approaches the uniform distribution on the line segment $\{\phi_1,\phi_2\ge 0,\, \phi_1+\phi_2=\pi/2\}$. Geometrically, the triad $\xi$, $W_1$, $W_2$ tend to form a right triangle with sides with hypotenuse $\xi$. Consequently, $R_{1}$ and $R_{2}$ become bounded by 1 in the limit $d\to\infty$. The explosion time of the self-similar cascade of the Navier-Stokes becomes bounded by that of the classical Yule cascade, or equivalently the $\alpha$-Riccati cascade with $\alpha=1$, which is non-explosive. 
Hence, it is quite conceivable that the self-similar DSY cascade of the Navier-Stokes equations is non-explosive if $d$ is sufficiently large. The following result affirms that this is the case. 
\begin{prop}\label{nonexp11}
For $d\ge 12$, the self-similar cascade of the Navier-Stokes equations in $\mathbb{R}^d$ is a.s.\ non-explosive for any initial state $\xi\in\mathbb{R}^d\backslash\{0\}$.
\end{prop}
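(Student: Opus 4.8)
The plan is to apply \autoref{multibran} with the moment exponent $b=4$. For the self-similar Navier--Stokes cascade $\lambda(x)=x^{2}$ is locally bounded on $(0,\infty)$, and by \autoref{selfsimilar} the ratios along any path are i.i.d.\ with the density of $R_1$ recorded in \eqref{density}; hence it suffices to prove $\mathbb{E}[R_1^{4}]<1/2$ whenever $d\ge 12$. My first step is to evaluate this fourth moment exactly through the angular representation \eqref{W-dist3}--\eqref{angle-distr}. Writing $\Psi=\Phi_1+\Phi_2$, the sine rule gives $R_1=|W_1|/|\xi|=\sin\Phi_2/\sin\Psi$, so that
\[
\mathbb{E}[R_1^{4}]
= c_d\iint_{\Delta}\frac{\sin^{4}\phi_2}{\sin^{4}(\phi_1+\phi_2)}\,\sin^{d-3}(\phi_1+\phi_2)\,d\phi_1\,d\phi_2
= c_d\int_0^{\pi}\sin^{d-7}\psi\left(\int_0^{\psi}\sin^{4}\phi\,d\phi\right)d\psi ,
\]
the second equality being the change of variables $(\phi_1,\phi_2)\mapsto(\psi=\phi_1+\phi_2,\phi_2)$ with unit Jacobian over the region $0\le\phi_2\le\psi\le\pi$, legitimate by Tonelli since all integrands are nonnegative.

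Next I would reduce the inner integral. Using $\sin^{4}\phi=\tfrac38-\tfrac12\cos 2\phi+\tfrac18\cos 4\phi$ gives $\int_0^{\psi}\sin^{4}\phi\,d\phi=\tfrac38\psi-\tfrac14\sin 2\psi+\tfrac1{32}\sin 4\psi$. Integrated against $\sin^{d-7}\psi$ over $(0,\pi)$, the two oscillatory terms vanish because $\sin^{d-7}\psi\,\sin 2\psi$ and $\sin^{d-7}\psi\,\sin 4\psi$ are odd about $\psi=\pi/2$; what survives is
\[
\mathbb{E}[R_1^{4}]=\frac{3c_d}{8}\int_0^{\pi}\psi\,\sin^{d-7}\psi\,d\psi=\frac{3c_d}{8}\cdot\frac{\pi}{2}\int_0^{\pi}\sin^{d-7}\psi\,d\psi ,
\]
where I used the symmetrization identity $\int_0^{\pi}\psi\,g(\psi)\,d\psi=\tfrac{\pi}{2}\int_0^{\pi}g(\psi)\,d\psi$ valid for $g$ symmetric about $\pi/2$. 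I would then substitute $\int_0^{\pi}\sin^{m}\psi\,d\psi=\sqrt{\pi}\,\Gamma(\tfrac{m+1}{2})/\Gamma(\tfrac m2+1)$ and the value of $c_d$ from \eqref{c_d} (equivalently, the normalization $c_d\int_0^{\pi}\psi\sin^{d-3}\psi\,d\psi=1$ expressing that $S_d$ is a probability density), and collapse the resulting quotient of Gamma functions by repeated use of $\Gamma(t+1)=t\Gamma(t)$, which yields the clean rational identity
\[
\mathbb{E}[R_1^{4}]=\frac{3}{8}\cdot\frac{(d-3)(d-5)}{(d-4)(d-6)} .
\]
The inequality $\mathbb{E}[R_1^{4}]<1/2$ is then equivalent to $3(d-3)(d-5)<4(d-4)(d-6)$, i.e.\ $d^{2}-16d+51>0$, i.e.\ $d>8+\sqrt{13}\approx 11.6$; since $d$ is an integer this holds exactly for $d\ge 12$. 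Invoking \autoref{multibran} with $b=4$ then gives a.s.\ non-explosion for every initial state $\xi\in\mathbb{R}^d\setminus\{0\}$, which is the claim.

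The step I expect to carry the real content is the choice of the exponent $b=4$, together with checking that it is both admissible ($\mathbb{E}[R_1^{b}]<\infty$ requires $b<d-2$, which holds for all $d\ge 12$) and sharp. The naive choice $b=2$ provably fails: the same manipulation gives $\mathbb{E}[R_1^{2}]=\tfrac12 c_d\int_0^{\pi}\psi\sin^{d-5}\psi\,d\psi\ge\tfrac12$, since $\sin^{d-5}\psi\ge\sin^{d-3}\psi$ on $(0,\pi)$ while $c_d\int_0^{\pi}\psi\sin^{d-3}\psi\,d\psi=1$. Odd exponents lead to quotients of half-integer Gamma values that do not telescope to a rational function, and even exponents larger than $4$, though still producing a limit value below $1/2$ as $d\to\infty$, give a strictly worse threshold on $d$. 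Thus $b=4$ is essentially forced and is what pins the cutoff at exactly $d\ge 12$; the remaining ingredients --- the change of variables, the parity cancellation of the oscillatory terms, and the Gamma recursion --- are routine.
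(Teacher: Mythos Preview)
Your proof is correct, and it takes a genuinely different route from the paper's. Both arguments invoke \autoref{multibran} and hence reduce to establishing $\mathbb{E}[R_1^{b}]<1/2$ for some $b>0$, but the choices of $b$ and the subsequent computations differ substantially.

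The paper takes the dimension-dependent exponent $b=(d-3)/2$, sets $\alpha_d=\mathbb{E}[R_1^{(d-3)/2}]$, and proves the monotonicity $\alpha_d\le\alpha_{d-2}$ via a substitution and an integration by parts on the \emph{radial} representation \eqref{density}. It then relies on numerical evaluation of the double angular integral to check $\alpha_{12}\approx 0.4898<1/2$ and $\alpha_{13}\approx 0.4684<1/2$, so that the monotonicity propagates the bound to all $d\ge 12$. Your approach instead fixes $b=4$ once and for all and works entirely in the \emph{angular} coordinates: the parity cancellation of the oscillatory pieces of $\int_0^\psi\sin^4\phi\,d\phi$ and the symmetrization identity $\int_0^\pi\psi\,g(\psi)\,d\psi=\tfrac{\pi}{2}\int_0^\pi g(\psi)\,d\psi$ collapse the double integral to a single Wallis integral, which the Gamma recursion turns into the closed form $\mathbb{E}[R_1^4]=\tfrac{3}{8}\,\tfrac{(d-3)(d-5)}{(d-4)(d-6)}$. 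The resulting quadratic inequality $d^2-16d+51>0$ pins the threshold at $d>8+\sqrt{13}$, i.e.\ exactly $d\ge12$, with no numerics needed. (Amusingly, for $d=11$ your exponent coincides with the paper's, $4=(11-3)/2$, and indeed $\tfrac{18}{35}\approx0.5143$ matches their $\alpha_{11}$.) Your argument is more elementary and fully explicit; the paper's has the conceptual appeal of a dimension-adapted exponent and a clean monotonicity lemma, at the cost of a numerical endpoint check.
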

\begin{proof}
As shown above, $R_1$ has a density function $f_d(r)$ given by \eqref{density}. By \autoref{multibran}, it is sufficient to show that $\alpha_d:=\mathbb{E}[R_1^{(d-3)/2}]<1/2$. With the change of variable $\phi\mapsto s=\sqrt{1-2r\cos\phi+r^2}$, one gets
\[{{r}^{\frac{d-3}{2}}}{{f}_{d}}(r)=\frac{{{c}_{d}}}{r{{(4r)}^{\frac{d-3}{2}}}}\int_{|1-r|}^{1+r}{\frac{{{({{(1+r)}^{2}}-{{s}^{2}})}^{\frac{d-3}{2}}}{{({{s}^{2}}-{{(1-r)}^{2}})}^{\frac{d-3}{2}}}}{{{s}^{d-2}}}ds}.\]
Put $\bar{f}_{d}(r)=r^{(d-3)/2}f_d(r)$. We show $\bar{f}_d(r)\le \bar{f}_{d-2}(r)$ for all $d\ge 5$, $r>0$. By the fact that $\left(-\frac{1}{(d-3)s^{d-3}}\right)'=\frac{1}{s^{d-2}}$ and by integration by parts, one obtains
\begin{eqnarray*}\frac{r{{(4r)}^{\frac{d-3}{2}}}}{{{c}_{d}}}{\bar{f}_{d}}(r)&=&\int_{|1-r|}^{1+r}{\frac{{{({{(1+r)}^{2}}-{{s}^{2}})}^{\frac{d-3}{2}}}{{({{s}^{2}}-{{(1-r)}^{2}})}^{\frac{d-3}{2}}}}{{{s}^{d-2}}}ds}\\
&=&\int_{|1-r|}^{1+r}{\frac{{{({{(1+r)}^{2}}-{{s}^{2}})}^{\frac{d-5}{2}}}{{({{s}^{2}}-{{(1-r)}^{2}})}^{\frac{d-5}{2}}}}{{{s}^{d-4}}}(\underbrace{2+2{{r}^{2}}-2{{s}^{2}}}_{\le 4r})ds}\\
&\le& \frac{r{{(4r)}^{\frac{d-3}{2}}}}{{{c}_{d}}}{\bar{f}_{d-2}}(r).\end{eqnarray*}
Thus, $\alpha_d\le \alpha_{d-2}$ for all $d\ge 5$. The next step is to evaluate $\alpha_d$ for some values of $d$. 
By the sine rule in the triangle with edges $1,R_1,R_2$ (\autoref{Z_calc1}), ${{R}_{1}}=\frac{\sin {{\Phi }_{2}}}{\sin ({{\Phi }_{1}}+{{\Phi }_{2}})}$. Thus,
\begin{eqnarray*}
{{\alpha }_{d}}&=&\iint\limits_{\Delta }{{{\left( \frac{\sin {{\phi }_{2}}}{\sin ({{\phi }_{1}}+{{\phi }_{2}})} \right)}^{\frac{d-3}{2}}}{{S}_{d}}({{\phi }_{1}},{{\phi }_{2}})d{{\phi }_{1}}d{{\phi }_{2}}}={{c}_{d}}\iint\limits_{\Delta }{{{\left( \sin {{\phi }_{2}}\sin ({{\phi }_{1}}+{{\phi }_{2}}) \right)}^{\frac{d-3}{2}}}d{{\phi }_{1}}d{{\phi }_{2}}}.
\end{eqnarray*}
We obtain the numerical approximations
\[\alpha_{10}\approx 0.5427,\ \ \ \alpha_{11}\approx 0.5143,\ \ \ \alpha_{12}\approx 0.4898,\ \ \ \alpha_{13}\approx 0.4684.\]
This implies that $\alpha_d<1/2$ for all $d\ge 12$.
\end{proof}
\begin{rem}
Although an explicit formula for $\alpha_d$ is not needed for the above proof, one can obtain it with the aid of Mathematica:
\[{{\alpha }_{d}}=\frac{{{2}^{\frac{d-3}{2}}}\Gamma {{\left( \frac{d-1}{4} \right)}^{3}}}{\pi \Gamma \left( \frac{d-2}{2} \right)\Gamma \left( \frac{d+1}{4} \right)}.\]
\end{rem}
\begin{rem}
As mentioned before, \eqref{angle-distr} implies that, as $d\to\infty$, the random vector $(\Phi_1,\Phi_2)$ converges in distribution to a vector $(\Phi^\infty_1,\Phi_2^\infty)$ distributed uniformly on the line segment $\{\phi_1,\phi_2\ge 0,\, \phi_1+\phi_2=\pi/2\}$. The corresponding random variables $R_1$ and $R_2$ would then represent the sides of the right triangle with hypothenuse 1 and adjacent angles $\Phi^\infty_1$, $\Phi_2^\infty$. This configuration gives raise to a self-similar DSY cascade that can be viewed as the limit case of the cascades associated with \eqref{ssH_d}, corresponding to $d=\infty$. The mean-field model of this cascade for $d=\infty$ yields $R_1=R_2\equiv 1/\sqrt{2}$, which is exactly the DSY cascade of the $\alpha$-Riccati equation with $\alpha=1/2$. The special case $\alpha=1/2$ has several critical properties in the context of $\alpha$-Riccati cascades. First, this continuous-time Markov process is a Poisson process with unit intensity. Second, the infinitesimal generator for the  Markov process defined by the 
set of vertices alive at time $t$ is a bounded operator if
and only if $\alpha\le 1/2$ (see \cite{alphariccati, yule}).   
\end{rem}

\subsection{Explosion of the self-similar Navier-Stokes cascade in dimension $d=3$}
 
 In the case $d=3$, the self-similar DSY cascade (or dilogarithmic cascade) has several special properties. First, the ratios $R_{s|n}=X_{s|n}/X_{s|n-1}$ have a dilogarithmic distribution with density given by \eqref{dilogfunc}.
Second, conditionally given $W_v$,  the angle $\Phi_1$ between $W_{v*1}$ and  $W_{v}$ and the angle $\Phi_2$ between $W_{v*2}$ and  $W_{v}$ have a joint uniform distribution 
 in 
\[\Delta=\{(\phi_1,\phi_2):\ \phi_1,\phi_2\ge 0,\ \phi_1+\phi_2\le\pi\}.\]
We will use \autoref{expcriterion} to show that the dilogarithmic cascade is a.s.\ explosive.
Consider the random variable $R_{\max}=\max\{R_1,\,R_2\}$. Recall that  
 $R_1=|W_1|/|\xi|$ and $R_2={|W_2|}/{|\xi|}$.
By the triangular inequality, $R_{\max}\in[1/2,\infty)$. Our first step is to compute the distribution of $R_{\max}$.
\begin{lem}\label{Z-density}
The random variable $R_{\max}=\max\{R_1,\,R_2\}$ has a probability density
\[
g(r)=\frac{4}{\pi^2}\frac{1}{r}\ln\frac{r}{|r-1|}\IND{r\ge1/2}.
\]
\end{lem}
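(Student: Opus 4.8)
The plan is to first identify the full joint law of $(R_1,R_2)$ by a change of variables from the uniformly distributed pair of angles $(\Phi_1,\Phi_2)$, and then read off the law of $R_{\max}=\max\{R_1,R_2\}$ from the standard formula for the maximum of two exchangeable variables.

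First I would recall the ingredients already established for $d=3$: conditionally on $W_\theta=\xi$, the pair $(\Phi_1,\Phi_2)$ of angles that $W_1,W_2$ make with $\xi$ is uniform on $\Delta=\{\phi_1,\phi_2\ge 0,\ \phi_1+\phi_2\le\pi\}$, i.e.\ it has the constant density $S_3\equiv c_3=2/\pi^2$; and the law of sines in the triangle with sides $1,R_1,R_2$ gives $R_1=\sin\Phi_2/\sin(\Phi_1+\Phi_2)$, $R_2=\sin\Phi_1/\sin(\Phi_1+\Phi_2)$. Next I would observe that the map $(\phi_1,\phi_2)\mapsto(r_1,r_2)$ is a bijection from the interior of $\Delta$ onto $D:=\{(r_1,r_2):\ r_1,r_2>0,\ |r_1-r_2|<1<r_1+r_2\}$: this is just the assertion that a triangle with one side of length $1$ is determined up to congruence by the lengths $r_1,r_2$ of the other two sides, that $r_1,r_2$ are then subject exactly to the strict triangle inequalities, and that the two remaining angles are in turn determined by the three side lengths. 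A short differentiation gives $\det\dfrac{\partial(R_1,R_2)}{\partial(\Phi_1,\Phi_2)}=-\dfrac{\sin\Phi_1\sin\Phi_2}{\sin^2(\Phi_1+\Phi_2)}=-R_1R_2$, so by the change-of-variables formula the pair $(R_1,R_2)$ has density $\dfrac{2}{\pi^2\,r_1 r_2}\,\mathbbm{1}_D(r_1,r_2)$. As a consistency check, integrating $r_2$ over $(|r_1-1|,r_1+1)$ recovers the dilogarithmic density $f_3(r_1)=\frac{2}{\pi^2 r_1}\ln\frac{r_1+1}{|r_1-1|}$ of \eqref{dilogfunc}.

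Finally, since $(R_1,R_2)$ is exchangeable, for $r>0$ one has $\mathbb{P}(R_{\max}\le r)=\iint_{[0,r]^2}\frac{2}{\pi^2 xy}\,\mathbbm{1}_D(x,y)\,dx\,dy$, and differentiating under the integral (using the symmetry of the integrand) gives $g(r)=\frac{d}{dr}\mathbb{P}(R_{\max}\le r)=2\int_0^r \frac{2}{\pi^2 r y}\,\mathbbm{1}_D(r,y)\,dy$. For $0<y<r$ the conditions defining $D$ collapse to $y>\max(r-1,1-r)=|r-1|$, and the interval $(|r-1|,r)$ is nonempty precisely when $|r-1|<r$, i.e.\ when $r>1/2$; carrying out the elementary integral then yields $g(r)=\frac{4}{\pi^2 r}\ln\frac{r}{|r-1|}$ for $r>1/2$, $0$ for $r<1/2$, with the value at $r=1/2$ equal to $0$ in agreement with the formula, which is exactly the claimed density.

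I expect the one delicate point to be the bijectivity-and-range statement for the change of variables — verifying that the image is precisely $D$ and that the correspondence is one-to-one there, so that no branch of the inverse is overlooked; the Jacobian computation and the final one-dimensional integral are routine.
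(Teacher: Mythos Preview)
Your argument is correct, and it takes a genuinely different route from the paper's own proof. The paper stays in angle coordinates throughout: it writes $G(r)=\mathbb{P}(R_{\max}\le r)=2\,\mathbb{P}(R_1\le R_2\le r)$, identifies the corresponding region in $(\phi_1,\phi_2)$ via the constraints $\cos\phi_2^*=1/(2r)$ and $\tan\phi_1^*=r\sin\phi_2/(1-r\cos\phi_2)$, then differentiates $G$ in $r$ by Leibniz's rule, computes $\partial\phi_1^*/\partial r$, and evaluates the resulting one-variable integral. You instead first push the uniform law on $\Delta$ forward to $(r_1,r_2)$-space by the change of variables (Jacobian $|\det|=r_1r_2$), obtaining the explicit joint density $\tfrac{2}{\pi^2 r_1 r_2}\mathbbm{1}_D$, and only then read off the density of the maximum by a one-line integral. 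Your approach is a bit more streamlined and has the side benefit of exhibiting the clean joint law of $(R_1,R_2)$, from which the dilogarithmic marginal also falls out immediately; the paper's approach avoids having to verify bijectivity and the exact image $D$ of the angle map, trading that for a slightly more involved Leibniz differentiation. Both reach the same endpoint with comparable effort.
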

\begin{proof}
For any $r>0$, denote
\[G(r):=\mathbb{P}(R_{\max}\le r)=2\mathbb{P}(R_1\le R_2\le r)\,.\]
The event $[R_1\le R_2\le r]$ can be written as $[0\le\Phi_2\le\phi_2^*(r),\ \Phi_2\le\Phi_1\le\phi_1^*(r,\Phi_2)]$ (see \autoref{Z_calc1}). 
\begin{figure}[h]
\begin{center}
\includegraphics[scale=.8]{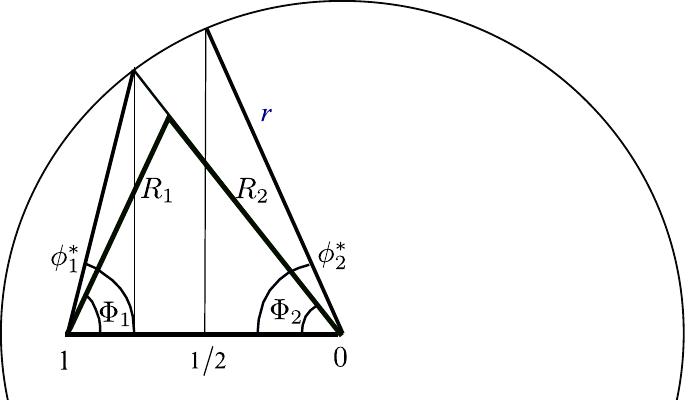} 
\end{center}
\vspace{-.2in}
\caption{An illustration of the event $[R_1\le R_2\le r]$.}\label{Z_calc1}
\end{figure}
We observe that
\[\cos\phi_2^*=\frac{1}{2r},\ \ \ \tan\phi_1^*=\frac{r\sin\phi_2}{1-r\cos{\phi_2}}\,.\]
Thus, 
\[G(r)=\frac{2}{|\Delta|}\int_0^{\phi_2^*}\int_{\phi_2}^{\phi_1^*}{d} \phi_1 {d} \phi_2=\frac{4}{\pi^2} \int_0^{\phi_2^*} \left(\phi_1^*-\phi_2\right)\,{d} \phi_2\,.
\]
By the chain rule of differentiation,
\begin{equation}\label{eq:722214}
\frac{dG}{dr}=\frac{4}{\pi^2}\int_0^{\phi_2^*(r)}\frac{\partial \phi_1^*}{\partial r} (r,\phi_2)\, d\phi_2
+\frac{4}{\pi^2}\left(\phi_1^*(r,\phi_2^*)-\phi_2^*\right)\,.
\end{equation}
Since $\phi_1^*(r,\phi_2^*)=\phi_2^*$, the second term on the right hand side is zero. To compute ${\partial \phi_1^*}/{\partial r}$, we note that on one hand
\begin{equation}\label{eq:722211}
\frac{\partial }{\partial r} \tan\left(\phi_1^*(r,\phi_2) \right)=\frac{\partial }{\partial r}\frac{r\sin\phi_2}{1-r\cos{\phi_2}}
=\frac{\sin\phi_2}{(1-r\cos{\phi_2)^2}}\,,
\end{equation}
 and on the other hand (the chain rule),
\begin{equation}\label{eq:722212}
\frac{\partial }{\partial r} \tan\left(\phi_1^*(r,\phi_2)\right) =\frac{1}{\cos^2\left(\phi_1^*(r,\phi_2)\right)}\frac{\partial \phi_1^*}{\partial r}(r,\phi_2)\,.
\end{equation}
Observe that (\autoref{Z_calc1})
\begin{equation}\label{eq:722213}
\cos^2\left(\phi_1^*(r,\phi_2)\right)=\frac{(1-r\cos\phi_2)^2}{(1-r\cos^2\phi_2)^2+(r\sin\phi_2)^2}=\frac{(1-r\cos\phi_2)^2}{1+r^2-2r\cos\phi_2}\,.
\end{equation}
By \eqref{eq:722211}, \eqref{eq:722212}, \eqref{eq:722213}, we obtain
\[
\frac{\partial \phi_1^*}{\partial r} (r,\phi_2)=\frac{\sin \phi_2}{1+r^2-2r\cos\phi_2}\,.
\] 
Now substitute this result into \eqref{eq:722214}:
\[
\frac{dG}{d r} = \frac{4}{\pi^2}\int_0^{\phi_2^*}\frac{\sin \phi_2}{1+r^2-2r\cos\phi_2}\,d\phi_2
=\frac{4}{\pi^2}\frac{1}{2r}\int_1^{2r}\frac{d u}{1+r^2-u}=\frac{4}{\pi^2}\frac{1}{r}\ln\frac{r}{|r-1|}\,.
\]
\end{proof}
\begin{rem} 
If we change variable $\tilde{R}=2R_{\max}-1$, then the distribution density of $\tilde{R}$ is given by
\[
\tilde{g}(r)=\frac{2r}{1+r}\frac{2}{\pi^2}\frac{1}{r}\ln\left|\frac{r+1}{r-1}\right|,\quad  r\ge 0,
\]
which is a tilted dilogarithmic distribution. 
Recall that $R_1$ and $R_2$ individually have the dilogarithmic distribution.
\end{rem}
\begin{prop}\label{exp3}
The self-similar cascade of the Navier-Stokes equations in $\mathbb{R}^3$ is a.s.\ explosive for any initial state $\xi\in\mathbb{R}^3\backslash\{0\}$.
\end{prop}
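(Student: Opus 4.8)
The plan is to apply \autoref{expcriterion} to the self-similar DSY cascade $\{X_v^{-2}T_v\}_{v\in\mathbb{T}}$ with $X_v=|W_v|$ and $\lambda(x)=x^2$, whose ratio law is given by \eqref{density} with $d=3$. First I would record that this cascade has properties (A), (B), (D): (A) and (B) are part of the self-similar structure already identified, and (D) --- that the conditional law of $(X_{v*1},X_{v*2})=(|W_{v*1}|,|W_{v*2}|)$ given $X_v$ is independent of $v$ --- holds because the branching kernel $H(\cdot\,|\,W_v)$ depends on $W_v$ alone and is rotationally invariant, so the conditional law of the pair of magnitudes $(|W_{v*1}|,|W_{v*2}|)$ is a fixed function of $|W_v|$. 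Note that \autoref{expcriterion} does not require the conditional-independence property (C), so this suffices.

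Next, fix the initial state $X_\theta=\xi$ and write $a=|\xi|$. By \autoref{selfsimilar} one has $|W_1|=aR_1$ and $|W_2|=aR_2$, where the joint law of $(R_1,R_2)$ does not depend on $a$; hence $Z:=\max\{\lambda(X_1),\lambda(X_2)\}=a^2\max\{R_1,R_2\}^2=a^2R_{\max}^2$, and therefore
\[
\mathbb{E}_\xi[Z^{-1}]=a^{-2}\,\mathbb{E}[R_{\max}^{-2}]=\lambda(\xi)^{-1}\,\kappa,\qquad \kappa:=\mathbb{E}[R_{\max}^{-2}].
\]
Thus \autoref{expcriterion} applies --- and yields $\mathbb{E}_\xi\zeta\le|\xi|^{-2}(1-\kappa)^{-1}<\infty$, hence $\zeta<\infty$ a.s.\ --- provided one checks $\kappa<1$.

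The only genuine computation is the evaluation of $\kappa$. By \autoref{Z-density}, $R_{\max}$ has density $g(r)=\tfrac{4}{\pi^2}\,r^{-1}\ln\tfrac{r}{|r-1|}$ on $[1/2,\infty)$, so $\kappa=\tfrac{4}{\pi^2}\int_{1/2}^{\infty} r^{-3}\ln\tfrac{r}{|r-1|}\,dr$. I would split the integral at $r=1$ and evaluate each half by a fractional-linear change of variables: on $(1/2,1)$ the substitution $r=(1+t)^{-1}$ converts the integrand into $(1+t)(-\ln t)\,dt$ over $t\in(0,1)$, contributing $\int_0^1(1+t)(-\ln t)\,dt=\tfrac54$; on $(1,\infty)$ the substitution $r=(1-s)^{-1}$ converts it into $(1-s)(-\ln s)\,ds$ over $s\in(0,1)$, contributing $\int_0^1(1-s)(-\ln s)\,ds=\tfrac34$. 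Adding these gives $\int_{1/2}^{\infty} r^{-3}\ln\tfrac{r}{|r-1|}\,dr=2$, so $\kappa=8/\pi^2$. Since $\pi^2>8$ we have $\kappa\in(0,1)$, and \autoref{expcriterion} gives $\mathbb{E}_\xi\zeta\le|\xi|^{-2}\,\pi^2/(\pi^2-8)<\infty$; hence $\zeta<\infty$ almost surely and the cascade is a.s.\ explosive, in accordance with the $0$--$1$ law of \autoref{01}(i). There is no real obstacle here: the logarithmic singularity of the integrand at $r=1$ is integrable, and the two substitutions carry it to an integrable endpoint singularity at the origin; the substance of the statement is simply that the resulting constant $8/\pi^2$ turns out to be strictly less than $1$.
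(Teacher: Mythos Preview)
Your proof is correct and follows the same route as the paper: apply \autoref{expcriterion} with $\lambda(x)=x^2$, $Z=|\xi|^2R_{\max}^2$, and use the density from \autoref{Z-density} to compute $\kappa=\mathbb{E}[R_{\max}^{-2}]=8/\pi^2<1$. The paper simply asserts the value of the integral, whereas you carry out the evaluation explicitly via the substitutions $r=(1+t)^{-1}$ and $r=(1-s)^{-1}$; your computations check out, and your verification of hypothesis (D) is a welcome addition.
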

\begin{proof}
By \autoref{Z-density},
\begin{equation}\label{EXP_1/Z^2}
\mathbb{E}\left[R_{\max}^{-2}\right]=\int_{1/2}^{\infty} r^{-2} g(r)\, d r =\frac{8}{\pi^2}<1.
\end{equation}
We can now apply \autoref{expcriterion} with $X_\theta=|\xi|$, $\lambda(x)=x^2$, $Z=|\xi|^2R_{\max}^2$, and $\kappa=8/\pi^2$.
\end{proof}
Our explosion criterion (\autoref{expcriterion}) is not satisfied for dimensions $d\ge 4$. The following proposition provides another way to obtain the expectation in \eqref{EXP_1/Z^2} for $d=3$ and also shows that $d=4$ is the critical case for our method.
\begin{prop}
For $d\ge 3$, we have
\[{{\kappa }_{d}}:=\mathbb{E}[R_{\max }^{-2}]=\frac{4}{\pi }\frac{\Gamma {{\left( \frac{d-1}{2} \right)}^{2}}}{\Gamma \left( \frac{d-2}{2} \right)\Gamma \left( \frac{d}{2} \right)}.\]
The sequence $\{\kappa_d\}_{d\ge 3}$ is strictly increasing with $\kappa_3=8/\pi^2$, $\kappa_4=1$ and $\lim_{d\to\infty}\kappa_d=4/\pi$.
\end{prop}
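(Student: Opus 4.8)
The plan is to integrate out one of the two random angles $\Phi_1,\Phi_2$ and reduce $\kappa_d=\mathbb{E}[R_{\max}^{-2}]$ to a single Wallis-type integral. By the sine-rule identities $R_1=\sin\Phi_2/\sin(\Phi_1+\Phi_2)$ and $R_2=\sin\Phi_1/\sin(\Phi_1+\Phi_2)$, one has
\[R_{\max}^{-2}=\frac{\sin^2(\Phi_1+\Phi_2)}{\max\{\sin^2\Phi_1,\ \sin^2\Phi_2\}}.\]
A short elementary observation is that on the triangle $\Delta$ the condition $\phi_1\ge\phi_2$ forces $\sin\phi_1\ge\sin\phi_2$ (split into $\phi_1\le\pi/2$, where $\sin$ is increasing, and $\phi_1>\pi/2$, where $\phi_1+\phi_2\le\pi$ gives $\phi_2\le\pi-\phi_1$ and $\sin\phi_1=\sin(\pi-\phi_1)\ge\sin\phi_2$). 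Combining this with the symmetry of the density $S_d(\phi_1,\phi_2)=c_d\sin^{d-3}(\phi_1+\phi_2)$ in its two arguments (see \eqref{angle-distr}) gives
\[\kappa_d=2c_d\iint_{\{0\le\phi_2\le\phi_1,\ \phi_1+\phi_2\le\pi\}}\frac{\sin^{d-1}(\phi_1+\phi_2)}{\sin^2\phi_1}\,d\phi_1\,d\phi_2.\]

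Next I would pass to the variable $s=\phi_1+\phi_2$, so that for fixed $s\in(0,\pi)$ the inner variable $\phi_1$ ranges over $[s/2,s]$; using $\int d\phi_1/\sin^2\phi_1=-\cot\phi_1$ and the identity $\cot(s/2)-\cot s=1/\sin s$ the inner integral collapses to $1/\sin s$, yielding $\kappa_d=2c_d\int_0^\pi\sin^{d-2}s\,ds$. The degenerate boundary behaviour at $s=0,\pi$ is harmless since the original integrand is bounded by a constant multiple of $s^{d-3}$ near $s=0$. Inserting the value of $c_d$ from \eqref{c_d} and the Beta-integral evaluation $\int_0^\pi\sin^{d-2}s\,ds=\sqrt{\pi}\,\Gamma\big(\tfrac{d-1}{2}\big)/\Gamma\big(\tfrac{d}{2}\big)$ produces the claimed closed form $\kappa_d=\tfrac{4}{\pi}\,\Gamma(\tfrac{d-1}{2})^2\big/\big(\Gamma(\tfrac{d-2}{2})\Gamma(\tfrac{d}{2})\big)$.

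For the remaining claims, $\kappa_3=8/\pi^2$ and $\kappa_4=1$ come from substituting $d=3,4$ with $\Gamma(1/2)=\sqrt\pi$ and $\Gamma(3/2)=\sqrt\pi/2$, while $\kappa_d\to 4/\pi$ follows from $\Gamma(x+a)/\Gamma(x+b)\sim x^{a-b}$ applied with $x=d/2$. For strict monotonicity I would write $W_m=\int_0^\pi\sin^m s\,ds$, so that $\Gamma(\tfrac{d-1}{2})/\Gamma(\tfrac{d-2}{2})=\sqrt\pi/W_{d-3}$ and hence $\kappa_d=\tfrac{4}{\pi}\,W_{d-2}/W_{d-3}$; it then suffices to show $m\mapsto W_m/W_{m-1}$ is strictly increasing, i.e.\ that $(W_m)$ is strictly log-convex. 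This is immediate from Cauchy--Schwarz: $W_m=\int_0^\pi(\sin s)^{(m-1)/2}(\sin s)^{(m+1)/2}\,ds\le (W_{m-1}W_{m+1})^{1/2}$, with strict inequality because $\sin$ is non-constant, so $W_m^2<W_{m-1}W_{m+1}$. I expect the only mildly delicate steps to be the bookkeeping of the region of integration in the reduction and recognizing the log-convexity argument for the monotonicity; the rest is routine.
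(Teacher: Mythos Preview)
Your argument is correct, and in two places it is genuinely cleaner than the paper's own proof. For the evaluation of $\kappa_d$, the paper writes the double integral over $\Delta_1=\{\phi_1\ge\phi_2\}$, folds $\phi_1\in(\pi/2,\pi)$ onto $(0,\pi/2)$, expresses the inner integral $K(\phi)=\int_0^\phi(\sin^{d-1}(\phi+\phi_2)+\sin^{d-1}(\phi-\phi_2))\,d\phi_2$ via a hypergeometric function ${}_1F_2$, and then evaluates ``with the aid of Mathematica or directly by induction on even and odd $d$.'' Your substitution $s=\phi_1+\phi_2$ followed by the elementary identity $\cot(s/2)-\cot s=1/\sin s$ collapses the whole thing to a single Wallis integral $2c_d\int_0^\pi\sin^{d-2}s\,ds$ with no special-function machinery and no case split on parity. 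For the strict monotonicity, the paper invokes Kershaw's inequality $\Gamma(x+1)/\Gamma(x+\tfrac12)>(x+\tfrac14)^{1/2}$ to bound the ratio $\kappa_{d+1}/\kappa_d$ directly; your rewriting $\kappa_d=\tfrac{4}{\pi}\,W_{d-2}/W_{d-3}$ and the Cauchy--Schwarz log-convexity of the Wallis sequence $(W_m)$ is self-contained and avoids importing an external gamma-ratio bound. The limit argument is the same in both. One very minor remark: your aside about boundary behaviour near $s=0$ could just as well note that after the reduction the integrand is $\sin^{d-2}s$, which is bounded on $[0,\pi]$ for $d\ge 2$, so no separate estimate is needed.
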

\begin{proof}
By the sine rule in the triangle with edges $1,R_1,R_2$ (\autoref{Z_calc1}), ${{R}_{1}}=\frac{\sin {{\Phi }_{2}}}{\sin ({{\Phi }_{1}}+{{\Phi }_{2}})}$ and ${{R}_{2}}=\frac{\sin {{\Phi }_{1}}}{\sin ({{\Phi }_{1}}+{{\Phi }_{2}})}$. 
On the triangle $\Delta_1=\{(\phi_1,\phi_2)\in\Delta:\phi_1\ge\phi_2\}$, we have $R_{\max}=R_2$. Thus,
\[{{\kappa }_{d}}=2\iint\limits_{{{\Delta }_{1}}}{r_{2}^{-2}{{S}_{d}}({{\phi }_{1}},{{\phi }_{2}})d{{\phi }_{2}}d{{\phi }_{1}}}=2{{c}_{d}}\int_{0}^{\pi }{\int_{0}^{{{\phi }_{1}}}{\frac{{{\sin }^{d-1}}({{\phi }_{1}}+{{\phi }_{2}})}{{{\sin }^{2}}{{\phi }_{1}}}d{{\phi }_{2}}d{{\phi }_{1}}}}=2{{c}_{d}}\int_{0}^{\pi /2}{\frac{K({{\phi }_{1}})}{{{\sin }^{2}}{{\phi }_{1}}}d{{\phi }_{1}}}\]
where $S_d$ is 
given by \eqref{angle-distr} and $K(\phi )=\int_{0}^{\phi }{({{\sin }^{d-1}}(\phi +{{\phi }_{2}})+{{\sin }^{d-1}}(\phi -{{\phi }_{2}}))d{{\phi }_{2}}}$. One can express $K$ in terms of the hypergeometric function as follows.
\[K(\phi )=\left\{ \begin{array}{*{35}{r}}
   \widetilde{K}(\phi ) & \text{if} & 0\le \phi \le \frac{\pi }{4},  \\
   \frac{\sqrt{\pi }\Gamma \left( \frac{d}{2} \right)}{\Gamma \left( \frac{d+1}{2} \right)}-\widetilde{K}(\phi ) & \text{if} & \frac{\pi }{4}\le \phi \le \frac{\pi }{2}  \\
\end{array} \right.\]
where $\widetilde{K}(\phi )=\frac{1}{d}\,{{\,}_{1}}{{F}_{2}}\left( \frac{1}{2},\frac{d}{2},\frac{d+2}{2};{{\sin }^{2}}(2\phi ) \right){{\sin }^{d}}(2\phi )$. One can now compute $\kappa_d$ either with the aid of Mathematica or directly by induction on even and odd $d$:
\[{{\kappa }_{d}}=2{{c}_{d}}\int_{0}^{\pi /4}{\left( \frac{K(\phi )}{{{\sin }^{2}}\phi }+\frac{K(\pi /2-\phi )}{{{\cos }^{2}}\phi } \right)d\phi =}\frac{4}{\pi }\frac{\Gamma {{\left( \frac{d-1}{2} \right)}^{2}}}{\Gamma \left( \frac{d-2}{2} \right)\Gamma \left( \frac{d}{2} \right)}.\]
Using the asymptotic approximation $\frac{\Gamma(x+1/2)}{\Gamma(x)}\sim\sqrt{x}$ as $x\to\infty$, one gets $\kappa_d\to \frac{4}{\pi}$ as $d\to\infty$. To show the monotonicity, we use Kershaw's inequality $\frac{\Gamma(x+1)}{\Gamma(x+1/2)}>(x+\frac{1}{4})^{1/2}$ for all $x>0$ (see \cite{kershaw}).
\[\frac{{{\kappa }_{d+1}}}{{{\kappa }_{d}}}=\frac{4}{(d-1)(d-2)}{{\left( \frac{\Gamma \left( \frac{d}{2} \right)}{\Gamma \left( \frac{d-1}{2} \right)} \right)}^{4}}>\frac{4}{(d-1)(d-2)}{{\left( \frac{d}{2}-\frac{3}{4} \right)}^{2}}>1.\]
\end{proof}

\begin{rem}
As mentioned in the introduction, there is a strong connection between the stochastic explosion of the DSY cascade underlying a certain evolution PDE and the well-posedness  problem for the PDE itself. In particular, if we simplify the mild-type formulation \eqref{generic} corresponding to the Navier-Stokes equations in Fourier space by replacing $\hat{u}$ with a scalar function and replacing the sophisticated product-like structure in $B(\hat{u},\hat{u})$ (coming from the Fourier transform of $(u\cdot\nabla) u$) with the a simple product of scalars, we will obtain a 
non-linear scalar PDE with {\em exactly the same DSY cascade structure} as the Navier-Stokes equations \eqref{NSE}. This scalar PDE was first considered by Montgomery-Smith \cite{smith} as a model for a finite-time blowup.
As shown in \cite{smallness}, the simplified product structure of $B$ 
allows to recover the blowup results from \cite{smith} {\em directly} from the DSY cascade. Also, the explosion of the self-similar DSY cascade (\autoref{exp3}) yields a nonuniqueness result for the initial value problem of the Montgomery-Smith equation \cite{smallness}. 

In the case of 3-dimensional Navier-Stokes equations, the problem of existence and uniqueness of global solutions naturally involves possibility of cancellations in 
\eqref{gen-recursion} due to the geometric structure of the bilinear product $B$. 
While the present paper focussed on the time evolution of the DSY cascade process itself, 
it remains to be determined whether the additional geometric structure emanating from the nonlinear term $B$ corresponding to 
 \eqref{NSE} provides sufficient cancellations in some smooth initial data to negate the impact of explosion of the self-similar DSY cascade.
\end{rem}

\bibliography{References}
\end{document}